\newtheorem{theorem}{Theorem}[section]
\newtheorem{lemma}[theorem]{Lemma}
\newtheorem{cor}[theorem]{Corollary}
\theoremstyle{definition}
\newtheorem{definition}[theorem]{Definition}
\newtheorem{example}[theorem]{Example}
\theoremstyle{remark}
\newtheorem{remark}[theorem]{Remark}
\numberwithin{equation}{section}
\theoremstyle{conjecture}
\newcommand\Tstrut{\rule{0pt}{2.6ex}}         
\newcommand\B{\mathbb{B}}
\newcommand\C{\mathbb{C}}
\newcommand\Z{\mathbb{Z}}
\newcommand\T{\mathbb{T}}
\newcommand\cA{\mathcal{A}}
\newcommand\cB{\mathcal{B}}
\newcommand\cC{\mathcal{C}}
\newcommand\cD{\mathcal{D}}
\newcommand\cE{\mathcal{E}}
\newcommand\cF{\mathcal{F}}
\newcommand\cH{\mathcal{H}}
\newcommand\cO{\mathcal{O}}
\newcommand\cP{\mathcal{P}}
\newcommand\cU{\mathcal{U}}
\newcommand\cZ{\mathcal{Z}}
\newcommand\Aut{\operatorname{Aut}}
\newcommand\Inn{\operatorname{Inn}}
\newcommand\End{\operatorname{End}}
\newcommand\Tube{\operatorname{Tube}}
\newcommand\id{\mathrm{id}}
\newcommand\Ad{\mathrm{Ad}}
\newcommand{\Irr}{\operatorname{Irr}}
\newcommand{\Inv}{\operatorname{Inv}}
\newcommand\inpr[2]{\langle{#1,#2}\rangle}
\newcommand{\hG}{\hat{G}}
\newcommand{\tG}{\tilde{G}}
\newcommand{\tmu}{\widetilde{\mu}}
\newcommand{\tpi}{\widetilde{\pi}}
\newcommand{\tsigma}{\widetilde{\sigma}}
\newcommand{\tpsi}{\widetilde{\psi}}
\newcommand{\brho}{\overline{\rho}}
\title[Drinfeld centers of fusion categories]
{Drinfeld centers of fusion categories arising from generalized Haagerup subfactors} 
\author{Pinhas Grossman}
\address{School of Mathematics and Statistics, University of New South Wales,
Sydney NSW 2052, Australia}
\email{p.grossman@unsw.edu.au}
\author{Masaki Izumi}
\address{Department of Mathematics\\ Graduate School of Science\\
Kyoto University\\ Sakyo-ku, Kyoto 606-8502\\ Japan}
\email{izumi@math.kyoto-u.ac.jp}
\subjclass[2010]{ 
Primary 46L37; Secondary 18D10}
\keywords{ 
subfactors, fusion categories, Cuntz algebras}
\thanks{Supported in part by JSPS KAKENHI Grant Number JP15H03623 and ARC grants DP140100732 and DP170103265.}
\begin{document} 

\begin{abstract}
We consider generalized Haagerup categories such that $1 \oplus X$ admits a $Q$-system for every non-invertible simple object $X$. We show that in such a category, the group of order two invertible objects has size at most four. We describe the simple objects of the Drinfeld center and give partial formulas for the modular data. We compute the remaining corner of the modular data for several examples and make conjectures about the general case. We also consider several types of equivariantizations and de-equivariantizations of generalized Haagerup categories and describe their Drinfeld centers.

In particular, we compute the modular data for the Drinfeld centers of a number of examples of fusion categories arising in the classification of small-index subfactors: the Asaeda-Haagerup subfactor; the $3^{\Z_4} $ and $3^{\Z_2 \times \Z_2} $ subfactors; the $2D2$ subfactor; and the $4442$ subfactor.

The results suggest the possibility of several new infinite families of quadratic categories. A description and generalization of the modular data associated to these families in terms of pairs of metric groups is taken up in the accompanying paper \cite{GI19_2}.

\end{abstract}

\maketitle

\section{Introduction} 
In the 1990s Asaeda and Haagerup discovered two ``exotic'' subfactors, known as the Haagerup subfactor and the Asaeda-Haagerup subfactor \cite{MR1686551}. They called these subfactors exotic since they were the first examples of finite depth subfactors which did not arise from known symmetries of groups or quantum groups. The fusion category $ \mathcal{C}$ which is the principal even part of the Haagerup subfactor has  as its group of invertible objects $$G=\text{Inv}(\mathcal{C})=\Z_3 $$ and is tensor generated by a simple object $X$ satisfying the fusion rules
$$ g \otimes X \cong X\otimes g^{-1} \text{ and } X \otimes X=1 \oplus \bigoplus_{g \in G} \limits g \otimes X.$$
The Haagerup subfactor corresponds to the algebra object $1\oplus X$ in this category.

In \cite{MR1832764} the second named author gave a new construction of the Haagerup subfactor in which the simple objects of $\mathcal{C} $ are realized by certain endomorphisms of the Cuntz C$^*$-algebra $ \cO_4$ (extended to a von Neumann algebra closure). In this construction, the four Cuntz algebra generators correspond to embeddings of the four simple summands of $X \otimes X$. The structure constants for the action of these endomorphisms on the generators encode the associativity structure of the tensor category; these constants are in turn determined by certain polynomial equations. 

More generally, it was shown that for an arbitrary finite Abelian group $G$ of odd order, there is a similar system of polynomial equations whose solution gives an associativity structure for a tensor category satisfying the Haagerup fusion rules above, but with $G$ replacing $\Z_3 $. Such a generalized Haagerup category can then be realized via endomorphisms of the Cuntz algebra $ \cO_{|G|+1}$. There is an additional equation for the existence of a $Q$-system (an algebra structure with a unitarity condition \cite{MR1257245}) on $1\oplus X$; a subfactor corresponding to such a $Q$-system is called a generalized Haagerup subfactor for $G$, or a $3^G$ subfactor (after the shape of its principal graph). It was shown  in  \cite{MR1832764} that there is a unique $3^{\Z_5}$ subfactor (up to isomorphism of the standard invariant), but the general existence question was left open.

The Drinfeld center $\cZ(\mathcal{C}) $ of a fusion category is the category of half-braidings of $\mathcal{C} $ by objects $Y \in \mathcal{C} $. For a fusion category over $\mathbb{C} $, $\cZ(\mathcal{C}) $ is a non-degenerate braided fusion category. If $\mathcal{C} $  is spherical (in particular if $\mathcal{C} $ is unitary), then $\cZ(\mathcal{C}) $ has the structure of a modular tensor category \cite{MR1966525}. A modular tensor category gives rise to a projective unitary representation of the modular group $SL_2(\mathbb{Z}) $, with canonical generators mapped to a pair of matrices called the $S$ and $T$ matrices, also known as the modular data. The modular data encodes among other things the fusion rules of the category via the Verlinde formula \cite{MR954762}.

Modular tensor categories appear in a variety of contexts, including conformal field theory \cite{MR1153682}, quantum topology \cite{MR1186845}, and topological quantum computing \cite{MR1910832}. On the other hand, every fusion category can be realized as a category of modules over a commutative algebra in its Drinfeld center. Thus the Drinfeld center construction provides a bridge between the theory of ordinary fusion categories and that of modular tensor categories.

A useful feature of the Cuntz algebra approach to the construction of subfactors is that it comes with a simple formalism for computing arbitrary tensor products and compositions of morphisms in the tensor category. This was exploited in  \cite{MR1832764} to give an explicit description of the tube algebra of the Haagerup category, and thereby compute the modular data of its Drinfeld center.

In \cite{MR2837122}, Evans and Gannon found simpler formulas for the modular data of the Drinfeld center of the Haagerup category. They generalized these formulas to an infinite family of modular data, which they conjectured were realized by Drinfeld centers of generalized Haagerup categories. They also computed solutions to the polynomial equations for generalized Haagerup categories for a number of odd groups, 
and found numerical evidence for the existence of (non-unique in some cases) generalized Haagerup subfactors for all odd cyclic groups up to order $19$.

The fusion categories which are the even parts of the Asaeda-Haagerup subfactor have less symmetric fusion rules than the Haagerup category. The original construction of Asaeda and Haagerup used generalized open string bimodules, a generalization of Ocneanu's connection formalism for graphs, to describe the categories. Their calculations showed the existence of the categories, but did not give a practical framework for performing complicated computations within the category; in particular, a description of the Drinfeld center was not accessible.

In \cite{MR3859276}, the authors and Snyder gave a new construction of the Asaeda-Haagerup subfactor. The construction starts with a generalized Haagerup category for the group $G=\Z_4 \times \Z_2$. The original Asaeda-Haagerup categories are then shown to be Morita equivalent to a $\Z_2  $-de-equivariantization of this category. The system of polynomial equations for generalized Haagerup categories associated to even groups is considerably more subtle than for the odd case, and involves a collection $\epsilon_{(.)}(g) $ of characters  of $G_2 $, the group of order two elements of $G$, indexed by $G$   \cite{MR3827808}. 

Since the Drinfeld center is an invariant of Morita equivalence, this construction allows us to use the Cuntz algebra framework for generalized Haagerup categories to describe the Drinfeld center and compute the modular data of the Asaeda-Haagerup categories (which was first announced in \cite{MR3859276}). The original motivation of this paper was to describe this computation. 
 
However, it turns out that generalized Haagerup categories for groups of even order play a central role in the classification of small-index subfactors beyond the Asaeda-Haagerup case. Subfactors with index less than $4$ have index of the form $ 4\cos^2 \frac{\pi}{k}, k=3,4,5...$ \cite{MR696688}, and their principal graphs are simply laced Dynkin diagrams. In the 1990s Haagerup initiated the classification of subfactors with index slightly above $4$ by searching for admissible principal graphs (which is how the Haagerup and Asaeda-Haagerup subfactors were discovered)  \cite{MR1317352}.
This classification has now been extended to index $5$ \cite{MR3166042}, and then to index $5.25 $ \cite{1509.00038}, with only a small number of finite-depth examples appearing.

The most interesting index value between $5$ and $5.25$ is $3+\sqrt{5} $, which is the first composite index above $4$. There are exactly seven finite depth subfactor planar algebras at index $3+\sqrt{5}$ up to duality \cite{1509.00038}. Of these, two are the unique $3^{\Z_4}$ and $3^{\Z_2 \times \Z_2}$ subfactors; another one is the $2D2$ subfactor , which is related to the $3^{\Z_4}$ subfactor through a $\Z_2 $-de-equivariantization; and another one is the $4442$ subfactor, which is related to the $3^{\Z_2 \times \Z_2}$ subfactor through a $ \Z_3$-equivariantization (and another one is related to the $3^{\Z_4}$ subfactor by Morita equivalence) \cite{MR3827808,MR3314808,MR3394622,MR3402358}.

In addition to the Asaeda-Haagerup subfactor, we compute the modular data for these four subfactors with index $3+\sqrt{5}$, as well as for a $\Z_2 $-de-equivariantization of a $3^{\Z_8} $ subfactor. We also numerically compute modular data for $3^{\Z_6} $, $3^{\Z_8}$, and $3^{\Z_{10}} $ subfactors (of which there are two each for $\Z_6 $ and $\Z_{10} $).

More generally, we consider families of subfactors associated to these examples. The appearance of the characters $\epsilon_{(.)}(g) $ on $G_2$ makes the study of generalized Haagerup categories for even groups considerably more complicated than the odd case. Indeed, generalized Haagerup categories for odd groups can always be constructed as de-equivariantizations of near-group categories \cite{MR3635673}; this is not true for even groups. However, our first main result restricts the groups for which generalized Haagerup subfactors can exist, under an additional assumption. 
\begin{theorem}
Let $G$ be the group of invertible objects of a generalized Haagerup category, and suppose that $1\oplus (g \otimes X) $ admits a $Q$-system for all $g \in G$. Let $G_2 =(\Z_2)^m \subseteq G$ be the elementary $2$-group of order $2$ elements. Then $m \leq 2 $.
\end{theorem}

In light of this result, we consider generalized Haagerup categories associated to a group
$G=\Z_{2^m} \times \Z_{2^n} \times H   $ with $m \geq n \geq  0$ and $|H|$ odd. By analyzing the tube algebra, we can describe the simple objects in the Drinfeld center. All of the simple objects in the center contain invertible simple summands except for
$$\mu =\bigoplus_{g \in G}  \limits g \otimes X$$ (which has $\frac{|G|^2}{2} $ half-braidings); and, in the case that $|G_2|=2$, the objects $\nu_{\pm} $, where $\mu=\nu_+ \oplus \nu_-$ is the decomposition of $\mu $ into the direct sums of those $g \otimes X $ indexed by the two cosets of $2G $ in $G$ (which have two half-braidings each).  

We give formulas for the parts of the modular data that do not involve half-braidings of $\mu $ and $\nu $ in terms of the characters $\epsilon_{(.)}(g) $. In the case that $|G_2|=2 $, we can also find formulas for the parts of the modular data when only one of the indices is a half-braiding of $\mu $ or $\nu $. In the case that $|G_2|=4$, under the additional assumption that the braiding on $G_2$ is nondegenerate (which is a property of the characters $\epsilon_{(.)}(g) $), the modular data factors, and we have formulas for the modular data modulo the corner indexed by the $\frac{|G|^2}{8} $ half-braidings of $\mu $ in the commutant of $G_2$. 

To compute the missing corners of the modular data, it is necessary to perform detailed calculations in the tube algebra, and we do not have general formulas. However, we have conjectures based on computations for small examples.

For the Asaeda-Haagerup subfactor, we consider a generalized Haagerup category $\cC $ for $G=\Z_4 \times \Z_2$ with a certain form of $\epsilon$, as in \cite{MR3859276}. Then $\text{Vec}_{\Z_2 \times \Z_2}$  lifts to the center as a nondegenerate subcategory, and its commutant in  $\cZ(\cC)$ is exactly the Drinfeld center of the Asaeda-Haagerup categories. More generally, we can consider  $\Z_2$-de-equivariantizations for generalized Haagerup categories for $\mathbb{Z}_{4m} \times \Z_2 $ with a similar form of $\epsilon $. We compute the missing corner of the modular data for the Asaeda-Haagerup case $m=1$. 

The $2D2 $ subfactor arises from a $\Z_2$-de-equivariantization of a generalized Haagerup category for $\Z_4 $ with non-trivial $\epsilon $ (which is uniquely determined up to gauge equivalence). We can similarly consider $\Z_2$-de-equivariantizations of generalized Haagerup categories for $\Z_{4m} $ with non-trivial $\epsilon $. We obtain formulas for the modular data except for a corner, and compute the missing corner for the two known examples $m=1,2$.

Finally, we consider the $4442$ subfactor, which comes from a $\Z_3$-equivariantization of a generalized Haagerup category $ \mathcal{C}$ for $\Z_2 \times \Z_2 $. We compute the modular data from the tube algebra of the Morita equivalent crossed product category, which is a $\Z_3 $-graded extension of $\mathcal{C} $ and has a Cuntz algebra model. It may be possible to generalize this construction to generalized Haagerup categories of the form $\Z_2 \times \Z_2 \times H $ with $|H|$ odd, but we do not know of any examples for nontrivial $H$, and we do not attempt this generalization here.

In summary, these examples suggest several possibly infinite families of quadratic categories, which are distinguished by the $2$-subgroups of the associated finite group.

\begin{itemize}
\item Generalized Haagerup categories for $G=\Z_{4m} $, and their $\Z_2 $-de-equivariantizations (or more generally $\Z_{4m} \times H $ with $H$ odd) - known to exist for $m=1,2$ and $H$ trivial.
\item  Generalized Haagerup categories for $G=\Z_{4m+2} $ (or more generally $\Z_{4m+2} \times H $ with $H$ odd) - known to exist for $m=0,1,2 $ and $H$ trivial. (For $m=1,2$ these occur in pairs.)
\item Generalized Haagerup categories for $G= \Z_{4m} \times \Z_2 $ (or more generally $ \Z_{4m} \times \Z_2 \times H $ with $H$ odd), and their $\Z_2 $-deequivariantization - only known to exist when $m=1$ and $H$ is trivial.
\item Generalized Haagerup categories for $G=\Z_2 \times \Z_2 \times H$, with $H$ odd, and its $\Z_3 $-equivariantization - both only known to exist when $H$ is trivial.
\end{itemize}

An analysis of potential modular data associated to such families, as well as generalizations of this modular data, in terms of pairs of metric groups will appear in the accompanying paper \cite{GI19_2}.

To compute the missing corners of the modular data in examples, we use Mathematica to perform the arithmetic in the tube algebra. These calculations require formulas for the tube algebra operations. Such formulas are in principle straightforward to derive from the definition of the tube algebra, the Cuntz algebra model for generalized Haagerup categories, and the orbifold construction for (de)-equivariantization. In practice the formulas are laborious to write down, so we include them in an online appendix.

The paper is organized as follows.

In Section \ref{prelim} we review some preliminaries on fusion categories, tube algebras, and generalized Haagerup subfactors.

In Section \ref{restrict} we prove the restriction on $G_2$ for a certain type of generalized Haagerup category. Then we describe the tube algebra and give partial formulas for the modular data.

In Section \ref{corner} we describe how to compute the remaining corner of the modular data, and discuss several examples with $|G_2|$=2.

In Section \ref{factor} we consider the Muger factorization of the Drinfeld center for the case that $|G_2|=4 $ and the braiding on $G_2$ is non-degenerate. This case includes the Asaeda-Haagerup subfactor (via de-equivariantization).

In Section \ref{deeq} we consider a $\Z_2 $-de-equivariantization of a generalized Haagerup for $\Z_{4m} $. This case includes the $2D2$ subfactor. 

In Section \ref{4442} we describe the tube algebra and compute the modular data for the $\Z_3 $-equivariantization of a generalized Haagerup category for $\Z_2 \times \Z_2 $ (which is the even part of the $4442$ subfactor).

In an online appendix, included with the \texttt{arxiv} source, we give formulas for multiplication, involution, and rotation in the tube algebras of generalized Haagerup categories and their (de)-equivariantizations.

In an accompanying Mathematica notebook \texttt{solutions.nb}, also included with the \texttt{arxiv} source, we give the structure constants for generalized Haagerup categories for $\Z_6 $, $\Z_8 $, and $\Z_{10} $.

This paper replaces an earlier \texttt{arxiv} preprint titled ``Quantum doubles of generalized Haagerup subfactors and their orbifolds'', which described the computations of most of the main examples in this paper but without discussing the general cases.

\textbf{Acknowledgements.}  We would like to thank the American Institute of Mathematics for its generous hospitality during the SQuaRE Classifying Fusion Categories and the Isaac Newton Institute for its generous hospitality during the semester program Operator algebras: Subfactors and their Applications. This paper was originally motivated by the new construction of the Asaeda-Haagerup subfactor from a generalized Haagerup category, which was joint work with Noah Snyder \cite{MR3859276}. We would like to thank Marcel Bischoff for pointing out to us the realization of the  $3^{\Z_2 \times \Z_2} $ subfactor from a conformal inclusion in \cite{MR3764563}. We would like to thank Eric Rowell for pointing out to us the zesting construction in \cite{MR3641612}.

\section{Preliminaries} \label{prelim}

\subsection{Fusion categories and the category $\text{End}_0(M)$}
A fusion category over the complex numbers $\C$ is a rigid semisimple $\C$-linear tensor category with 
finitely many simple objects and finite dimensional morphism spaces such that the identity object is simple 
(see \cite{MR3242743}). 
In this paper we concentrate on unitary fusion categories embedded as full monoidal subcategories of the category of unital endomorphisms 
$\End(M)$ of a type III factor $M$. 
Note that such an embedding always exists under the assumption of unitarity (see \cite[Theorem 2.1]{MR3635673}). 
Our standard reference for the category $\End(M)$ is \cite{MR3308880}. We follow the convention of using Greek letters  for objects in $\text{End}(M) $.

For a Hilbert space $\cH$, we denote by $\B(\cH)$ the set of bounded operators on $\cH$, and by $\cU(\cH)$ the set of unitaries on $\cH$. 
The identity operator of $\cH$ is denoted by $1_{\cH}$ or simply by $1$. 
For a unital C$^*$-algebra $A$, we denote by $\cU(A)$ the set of unitaries in $A$. 
The unit of $A$ is denoted by $1_A$ or simply by $1$. 

Let $M$ be a type III factor. 
Then the set of unital endomorphisms $\End(M)$ forms a category with the morphism space from an object $\rho$ to another object $\sigma$ 
given by 
$$\text{Hom}(\rho,\sigma)=(\rho,\sigma)=\{t \in M;\; t \rho(x)=\sigma(x)t,\;\forall x\in M\}.$$
This category has a strict monoidal structure, with the monoidal product $\rho\otimes \sigma$ of 
two objects $\rho, \sigma\in \End(M)$ given by the composition $\rho\circ \sigma$, and the monoidal product between morphisms $t \in (\rho_1,\rho_2) $ and $s \in (\sigma_1, \sigma_2) $ given by 
$$t \otimes s =  t \rho_1(s)=\rho_2(s)t \in (\rho_1 \circ \sigma_1, \rho_2 \circ \sigma_2) .$$
The monoidal unit of $\End(M)$ is the identity automorphism $\id$ of $M$. The endomorphism space $(\rho,\rho)$ is just the relative commutant $M\cap \rho(M)'$; $\rho$ is simple, or irreducible, if this space consists only of the scalars. 
When discussing the monoidal category $\End(M) $, we will generally suppress the ``$\otimes$'' symbol, and refer directly to multiplication in $M$ and composition of endomorphisms.

The morphism space $(\rho,\sigma)$ inherits a Banach space structure from $M$, and the $*$-operation 
of $M$ maps $(\rho,\sigma)$ to $(\sigma,\rho)$, which makes $\End(M)$ a C$^*$-tensor category 
(see \cite[Section 1]{MR3308880}). 
Moreover, if $\rho$ is simple, the space $(\rho,\sigma)$ is a Hilbert space with an inner product 
given by $t_1^*t_2=\inpr{t_1}{t_2}1_M$ for $t_1,t_2\in (\rho,\sigma)$. 

For $\rho\in \End(M)$, its dimension $d(\rho)$ is defined by $[M:\rho(M)]_0^{1/2}$, 
where $[M:\rho(M)]_0 \in [1,\infty]$ is the minimum index of $\rho(M)$ in $M$. 
We denote by $\End_0(M)$ the set of $\rho\in \End(M)$ with finite $d(\rho)$. 
The dimension function $\End_0(M)\ni \rho\mapsto d(\rho)$ is additive with respect to 
the direct sum operation and multiplicative with respect to the monoidal product operation. 
The monoidal category $\End_0(M)$ is rigid, which means that objects have left and right duals. For any $\rho\in \End_0(M)$, 
there exists $\brho\in \End_0(M)$, called a dual, or conjugate, endomorphism of $\rho$, and two isometries 
$r_\rho\in (\id,\brho\circ \rho)$, $\overline{r}_\rho\in (\id,\rho\circ \brho)$ satisfying 
$$\overline{r}_\rho^*\rho(r_\rho)=r_\rho^*\brho(\overline{r}_\rho)=\frac{1}{d(\rho)}.$$
The evaluation morphism $\mathrm{ev}_\rho$ is identified with $\sqrt{d(\rho)}\overline{r}_\rho$, 
and the coevaluation morphism $\mathrm{coev}_\rho$ is identified with $\sqrt{d(\rho)} r_\rho^*$. 

If $\rho $ and $\sigma $ are isomorphic objects in $\End_0(M)$, then there exists a unitary $u\in \cU(M)$ 
satisfying $\rho=\Ad u\circ \sigma$, where $\Ad u$ is the inner automorphism of $M$ given by $\Ad u (x)=uxu^*$; we then say that $\rho $ and $\sigma $ are unitarily equivalent, or simply equivalent. We denote by $\Inn(M)$ the group of inner automorphisms of $M$.

For a fusion category $\cC\subset \End(M)$, the categorical dimension $d(\rho)$ coincides with the Frobenius-Perron dimension in $\cC $. We denote by $\Irr(\cC)$ the set of isomorphism classes of 
simple objects in $\cC$. 
We often identify an element in $\Irr(\cC)$ with one of its representatives if there is no possibility of confusion. 
The global dimension of $\cC$ is defined by 
$$\dim \cC=\sum_{\xi\in \Irr(\cC)}d(\xi)^2.$$

\subsection{The Drinfeld Center}

Let $\mathcal{C} $ be a monoidal category. A half-braiding for an object $X \in \mathcal{C} $ is a
natural isomorphism $\cE_X: X \otimes (-) \rightarrow (-) \otimes X$,  satisfying the hexagon and unit identities. If $\cC $ is strict, these identities reduce to
 
  $$\cE_X(Y \otimes Z)=(\id_Y \otimes \cE_X(Z)) \circ (e_X(Y) \otimes \id_Z),  \ \forall Y,Z \in \mathcal{C}.$$
and
$$\cE_X(1)=\id_{X} ,$$
respectively.

The Drinfeld center $\cZ(\mathcal{C}) $ is the
category whose objects are half-braidings $(X, \cE_X) $ of objects in $\mathcal {C}$ and whose morphisms are given by  $$\text{Hom}((X, \cE_X) ,(Y, \cE_Y) )=$$ $$\{ t \in \text{Hom}(X,Y): (\id_Z \otimes t) \circ \cE_X(Z)
=\cE_Y(Z) \circ (t \otimes \id_Z), \ \forall Z \in \mathcal{C} \} .$$ The Drinfeld center is a braided monoidal category, with tensor product of 
 $(X, \cE_X) $ and $(Y,\cE_Y) $ given by $$(X\otimes Y, \cE_{X \otimes Y} )$$
 with
$$\cE_{X \otimes Y}(Z) =(\cE_X(Z) \otimes \id_Y) \circ (\id_X \otimes \cE_Y(Z)),  \ \forall Z \in \mathcal{C}, $$
and braiding given by $$c_{X,Y}=\cE_X(Y)   .$$

If $\cC $ is semisimple, then a half-braiding $\cE_X $ is determined by $\cE_X(Y)$ as $Y$ ranges over representatives of isomorphism classes of simple objects. If $\cC $ is a fusion category, then $\cZ(\mathcal{C}) $ is a fusion category as well, and the braiding on $\cZ(\mathcal{C}) $ is non-degenerate. If $\cC$ is a unitary fusion category, the unitary Drinfeld center (where the  $e_X(Y) $ are required to be unitaries) is equivalent to the ordinary Drinfeld center \cite{MR1966525}. We will therefore only consider unitary half-braidings in this paper.

A modular tensor category is a non-degenerate braided spherical fusion category $ \mathcal{C}$. The $S$-matrix of a modular tensor category is defined by
$$S_{X,Y}= \frac{d_X d_Y}{ \sqrt{\text{dim}(\mathcal{C}) }} \text{tr}_{X \otimes Y} (c_{X,Y} \circ c_{Y,X})  ,$$
for simple objects $X$ and $Y$, where $ c$ is the braiding on $\mathcal{C} $, $d_X $ is the quantum dimension, $\text{dim} (\mathcal{C}) $ is the global dimension, and $\text{tr}$ is the normalized spherical trace on $\text{End} (X \otimes Y)$. The $S$-matrix is defined up to a choice of square root of the global dimension.

The $T$-matrix is defined by 
$$T_{X,Y}= \delta_{X,Y} d_X \text{tr}_{X \otimes X}(c_{X,X}) , $$ 
and the conjugation matrix $C$ is defined by $$C_{X,Y} =\delta_{X,\bar{Y}},$$
where $\bar{Y} $ is the dual object of $Y$.

For a modular tensor category over $\mathbb{C} $, $S$ is symmetric, $T$ is diagonal with finite order, and $S$ and $T$ are unitary \cite{MR2183279}. We have the relations $$\alpha(ST)^3=S^2=C =T^{-1}CT $$
for a scalar $\alpha $
\cite{MR1153682,MR1797619}. 

If $\mathcal{C} $ is a spherical fusion category over $\mathbb{C} $, then $\mathcal{Z}(\mathcal{C}) $ is a modular tensor category. 
We fix $\sqrt {\text{dim} (\mathcal{Z}(\mathcal{C})) }=\text{dim}(\mathcal{C})$, and then $\alpha=1 $ \cite{MR1966525}.

We now return to a unitary fusion category $\cC $ embedded in $\text{End}_0(M) $. Let $\sigma\in \cC$ be a (not necessarily simple) object in $\cC$. Then the data of a half-braiding for $\sigma$ is given by
 family of unitaries $\cE_\sigma=\{\cE_\sigma(\xi)\}_{\xi\in \Irr(\cC)}$ with
$\cE_\sigma(\xi)\in (\sigma\xi,\xi\sigma)$ such that for any $t \in (\zeta,\xi\eta)$ with $\xi,\eta,\zeta\in \Irr(\cC)$, 
we have
$$t \cE_\sigma(\zeta)=\xi(\cE_\sigma(\eta))\cE_\sigma(\xi)\sigma(t).$$ 

In general, a single object $\sigma$ may have several inequivalent half-braidings, and we introduce labelling  to distinguish them  
and use the notation $\widetilde{\sigma}^l=(\sigma,\cE_\sigma^l)$ for simplicity. 
We denote by $F$ the forgetful functor $\cF:\cZ(\cC)\to \cC$. 


\subsection{The tube algebra}\label{tube}
We summarize the basics of tube algebras following \cite{MR1782145}. 

The tube algebra for a fusion category $\cC\subset \End(M)$ is a finite dimensional C$^*$-algebra with 
underlying vector space 
$$\Tube\cC=\bigoplus_{\xi,\eta,\zeta\in \Irr(\cC)}(\xi\zeta,\zeta\eta).$$
An element $x \in (\xi\zeta,\zeta\eta)$ is denoted as an element of $\Tube \cC$ by 
$(\xi\;\zeta|x |\zeta\;\xi)$. 
The $*$-algebra operations of $\Tube \cC$ are defined by 
\begin{align*}
\lefteqn{(\xi\;\zeta|x|\zeta\;\eta)(\xi'\;\zeta'|y|\zeta'\;\eta')} \\
 &=\delta_{\eta,\xi'}\sum_{\nu\in \Irr(\cC)}\sum_{i=1}^{\dim(\nu,\zeta\zeta')}
 (\xi\;\nu|t(_{\zeta,\zeta'}^\nu)_i^*\zeta(y)x\xi(t(_{\zeta,\zeta'}^\nu)_i) |\nu\;\eta'),
\end{align*}
$$(\xi\;\zeta|x|\zeta\;\eta)^*=d(\zeta)(\eta\;\overline{\zeta}|\overline{\zeta}(\xi(\overline{r}_\zeta^*)x^*)r_\zeta|\overline{\zeta}\xi),$$
where $\{t(_{\zeta,\zeta'}^\nu)_i\}_{i=1}^{\dim(\nu,\zeta\zeta')}$ is an orthonormal basis of the space $(\nu,\zeta\zeta')$. 
We denote 
$$\cA_{\xi,\eta}=\bigoplus_{\zeta\in \Irr(\cC)}(\xi\zeta,\zeta\eta),$$
which is a subspace of $\Tube \cC$ satisfying 
$$\cA_{\xi,\eta}\cA_{\xi',\eta'}\subseteq \delta_{\eta,\xi'}\cA_{\xi,\eta'},\quad \cA_{\xi,\eta}^*=\cA_{\eta,\xi}.$$
In particular, $\cA_\xi:=\cA_{\xi,\xi}$ is a $*$-subalgebra of $\Tube \cC$ with unit 
$1_\xi=(\xi\;0|1|0\;\xi)$, 
where we denote $\id$ by $0$ for simplicity. 
We have $$\sum_{\xi\in \Irr(\cC)}\limits 1_\xi=1_{\Tube \cC}.$$
The algebra $\cA_\xi$ is a corner of $\Tube \cC$ in the sense that we have $1_\xi \Tube \cC 1_\xi=\cA_\xi$. 
In particular, every minimal projection in $\cA_\xi$ is minimal in $\Tube \cC$. 
The space $\cA_{\xi,\eta}$ is a $\cA_\xi$-$\cA_\eta$ bimodule with respect to the left and right multiplication 
of $\cA_\xi$ and $\cA_\eta$ respectively.

There is a one to one correspondence between the set of simple components of $\Tube \cC$ and $\Irr(\cZ(\cC))$, 
and we denote by $z(\widetilde{\sigma}^l)$ the minimal central projection in $\Tube \cC$ corresponding to $\widetilde{\sigma}^l$. 
Then the algebra $z(\widetilde{\sigma}^l)\Tube \cC$ is isomorphic to the full matrix algebra, and 
we can write down a system of matrix units for it in terms of $\cE_\sigma^l$ as follows. 
We choose an orthonormal basis $\{w_\sigma(\xi)_i\}_{i=1}^{\dim (\xi,\sigma)}$ of $(\xi,\sigma)$ for each 
$\xi\in \Irr(\cC)$, and set 
$$\cE_\sigma^l(\zeta)_{(\xi,i),(\eta,j)}=\zeta(w_\sigma(\eta)_j^*)\cE_\sigma^j(\zeta)w_\sigma(\xi)_i\in(\xi\zeta,\zeta\eta),$$
\begin{equation}\label{matuni}
e(\widetilde{\sigma}^l)_{(\xi,i),(\eta,j)}=\frac{d(\sigma)}{\Lambda\sqrt{d(\xi)d(\eta)}}\sum_{\zeta\in \Irr(\cC)}d(\zeta)
(\xi\;\zeta|\cE_\sigma(\zeta)_{(\xi,i),(\eta,j)}|\zeta\;\eta)\in \Tube \cC,
\end{equation}
where $\Lambda$ is the global dimension of $\cC$. 
Then $\{e(\widetilde{\sigma}^l)_{(\xi,i),(\eta,j)}\}_{(\xi,i),(\eta,j)}$ forms a system matrix units for the subalgebra 
$z(\widetilde{\sigma}^l)\Tube \cC$, and 
\begin{equation}\label{mincenpr}
z(\widetilde{\sigma}^l)=\sum_{(\xi,i)}e(\widetilde{\sigma}^l)_{(\xi,i),(\xi,i)}.
\end{equation}
In particular, the rank of $z(\widetilde{\sigma}^l)\Tube \cC$ is 
$$\sum_{\xi\in \Irr(\cC)}\dim(\xi,\sigma).$$
Eq.(\ref{matuni}) gives a very practical way to determine half-braidings from the algebra structure of $\Tube \cC$. 
Note that 
$$\sum_{i}e(\widetilde{\sigma}^l)_{(\xi,i),(\xi,i)}$$
is a minimal central projection in $\cA_\xi$, and the corresponding simple component of $\cA_\xi$ has rank $\dim (\xi,\sigma)$. 
Conversely every minimal central projection in $\cA_\xi$ is of this form. 
The element $e(\tsigma^l)_{(\xi,i),(\xi,i)}$ acts on $\cA_{\xi,\eta}$ as a projection of rank $\dim (\eta,\sigma)$. 

The modular data $(S,T)$ for the modular tensor category $\cZ(\cC)$ can be computed in terms of $\Tube \cC$ 
as follows. 
For $\xi\in \Irr(\cC)$, we set $$\mathbf{t}_{\xi}=d(\xi)(\xi\;\overline{\xi}|r_\xi\overline{r}_\xi^*|\overline{\xi}\;\xi).$$ 
Then $\mathbf{t}_\xi$ is a unitary central element of $\cA_\xi$ with adjoint 
$\mathbf{t}_\xi^*=(\xi\;\xi|1|\xi\;\xi)$. 
Let 
$$\mathbf{t}=\sum_{\xi\in \Irr(\cC)}\mathbf{t}_\xi.$$
Then $\mathbf{t}$ is a central unitary element in $\Tube\cC$, giving the $T$-matrix via 
$$\mathbf{t}z(\widetilde{\sigma}^l)=T_{\widetilde{\sigma}^l,\widetilde{\sigma}^l}z(\widetilde{\sigma}^l).$$
We can compute $T$ by computing the eigenvalues of $\mathbf{t}_{\xi}$ (or $\mathbf{t}_{\xi}^*$). 

We introduce a linear transformation $S_0$ of 
$$\bigoplus_{\xi}\cA_\xi$$
by 
$$S_0((\xi\;\eta|x|\eta\;\xi))
=d(\xi)(\overline{\eta}\;\xi|r_\eta^*\overline{\eta}(x\xi(\overline{r}_\eta))|\xi\overline{\eta}),$$
which can be thought of as rotation.
Then $S_0$ preserves the center of $\Tube \cC$, and the $S$-matrix is given by the matrix coefficients 
of $S_0$ with respect to the basis $\{\frac{\sqrt{\Lambda}}{d(\sigma)}z(\widetilde{\sigma}^l)\}$. 

We can extract these matrix coefficients using the linear functional on $\Tube\cC $
$$\psi ( \xi \; \zeta | x | \zeta \; \eta  )=d(\xi)^2\delta_{\xi, \eta}\delta_{\zeta,0}x .$$

Then
\begin{equation}\label{Sformula0}
S_{\widetilde{\sigma}^l,\widetilde{\mu}^m}=\frac{d(\sigma)d(\mu)}{\Lambda}\psi (S_0 (z(\widetilde{\sigma}^l)), z(\widetilde{\mu}^m) ).
\end{equation}

In practice, it is often easier to compute $S$ and $T$ from the following formulae via Eq.(\ref{matuni}) (see \cite[Lemma 5.3]{MR1782145}). 
Let $\phi_\xi(x)=r_\xi^*\xi(x)r_\xi$ be the standard left inverse of $\xi\in \Irr(\cC)$. Fix a simple summand $(\eta,j) $ of $\sigma $. Then

\begin{equation}\label{Sformula}
S_{\widetilde{\sigma}^l,\widetilde{\mu}^m}= \frac{d(\sigma)}{\Lambda}\sum_{\xi,i}d(\xi)\phi_\xi(\cE_\mu^m(\eta)_{(\xi,i),(\xi,i)}^*\cE_\sigma^l(\xi)_{(\eta,j),(\eta,j)}^*),
\end{equation}
where the sum is taken over simple summands of $ \mu$, and
\begin{equation}\label{Tformula}
T_{\widetilde{\sigma}^l,\widetilde{\sigma}^l}=d(\eta)\phi_\eta(\cE_\sigma^l(\eta)_{(\eta,j),(\eta,j)}). 
\end{equation}

Eq.(\ref{matuni}) implies the following observation, which allows us to determine $\Irr(\cZ(\cC))$ from the algebra structure of $\Tube \cC$. 

\begin{lemma}\label{bimodule} Let $\xi,\eta\in \Irr(\cC)$. 
\begin{itemize}  
\item[(1)]
The $\cA_\xi$-$\cA_\eta$ bimodule $\cA_{\xi,\eta}$ is decomposed as
$$\cA_{\xi,\eta}=\sum_{\mu\in \Irr(\cZ(\cC))}z(\mu)\cA_{\xi,\eta},$$
where each $z(\mu)\cA_{\xi,\eta}$ is, if it is not 0, an irreducible $\cA_\xi$-$\cA_\eta$ bimodule of dimension $\dim(\xi,\cF(\mu))\dim(\eta,\cF(\mu))$. 
If two objects $\mu,\nu\in \Irr(\cZ(\cC))$ are inequivalent, so are the corresponding bimodules $z(\mu)\cA_{\xi,\eta}$ and $z(\nu)\cA_{\xi,\eta}$. 
\item[(2)] Let $\mu\in \Irr(\cZ(\cC))$ with $\dim (\xi,\cF(\mu))\neq 0$. 
Then every minimal projection in $z(\mu)\cA_\xi$ acts on $\cA_{\xi,\eta}$ as a projection of rank 
$\dim(\eta,\cF(\mu))$. 
\end{itemize}
\end{lemma}

Apparently the following property of irreducible objects of fusion categories has never been observed before 
except in \cite[Proof of Theorem 6.4, Lemma 8.1,(7)]{MR1832764}. 
It often occurs in quadratic categories as a key feature that allows us to compute their tube algebras. 

\begin{definition} An irreducible object $\xi\in \cC$ is said to be \textit{multiplicity free} 
if the subalgebra $\cA_\xi$ of $\Tube \cC$ is abelian. 
\end{definition}

Eq.(\ref{matuni}) and Lemma \ref{bimodule} imply the following. 

\begin{lemma} \label{mulfree} Let $\xi,\eta\in \Irr(\cC)$. 
\begin{itemize}
\item[(1)] The object $\eta$ is multiplicity free if and only if for any $\mu\in \Irr(\cZ(\cC))$ the multiplicity of $\eta$ in 
$\cF(\mu)$ is at most one. 
\item[(2)] Assume that $\eta$ is multiplicity free. 
Then the multiplicity of every irreducible $\cA_\xi$ module contained in $\cA_{\xi,\eta}$ as a left $\cA_\xi$-module is at most one.  
\end{itemize}
\end{lemma}

\subsection{Generalized Haagerup categories} 

We now introduce generalized Haagerup categories embedded in $\End(M)$ 
(see \cite[Definition 2.7]{MR3827808} for the definition without unitarity). 
Let $\cC\subset \End(M)$ be a fusion category. 
Then the set of isomorphism classes of invertible objects forms a finite group, and we denote it by $G$. 
Throughout the paper we assume that $G$ is abelian, and we use additive notation for $G$. 
We choose a representative $\alpha_g\in \cC$ for each $g\in G$. 
Then by definition, we get 
$$[\alpha_g][\alpha_h]=[\alpha_{g+h}].$$
Thus the map $\alpha:G\to \Aut(M)$ is a $G$-kernel (see \cite{MR587749}), and it gives rise to an obstruction class 
in $H^3(G,\T)$, which is identified with the associator of the pointed category $\Inv(\cC)$. 
Furthermore, we assume that there exists a self-dual object $\rho\in \cC$ such that
\begin{equation}\label{irreducibles}
\Irr(\cC)=\{[\alpha_g]\}_{g\in G}\sqcup \{[\alpha_g\circ \rho]\}_{g\in G},
\end{equation}
with the fusion rules: 
\begin{equation}\label{fusion1}
[\alpha_g] [\rho]=[\rho][\alpha_{-g}],\quad g\in G,
\end{equation}
\begin{equation}\label{fusion2}
[\rho]^2=[\id]+ \sum_{g\in G}[\alpha_g\circ \rho].
\end{equation}

We consider a $\Z_2$-action on $G$ defined by multiplying by $-1$. 
Then the third cohomology obstruction $\mathfrak{c}^{0,3}(\cC)$ for $\alpha$ actually belongs to 
$H^3(G,\T)^{\Z_2}$ (see \cite[Lemma 2.5]{MR3827808}). 
Assume that it vanishes. 
Then we can choose $\alpha$ to be a $G$-action on $M$. 
From the fusion rules above, we see that there exist unitaries $u_g\in M$ for each $g\in G$ satisfying 
$$\Ad u_g\circ\alpha_g\circ \rho=\rho\circ \alpha_{-g}.$$
Since $\alpha$ is a $G$-action, we get 
\begin{align*}
\lefteqn{\Ad u_{g+h}\circ\alpha_{g+h}\circ \rho=\rho\circ \alpha_{g+h}=\rho\circ\alpha_g\circ\alpha_h} \\
 &=\Ad u_g\circ \alpha_g\circ\rho\circ \alpha_h=\Ad(u_g\alpha_g(u_h))\circ \alpha_g\circ \alpha_h\circ \rho 
=\Ad(u_g\alpha_g(u_h))\circ \alpha_{g+h}\circ \rho, 
\end{align*}
and there exists a 2-cocycle $\omega\in Z^2(G,\T)$ satisfying 
$$u_g\alpha_g(u_h)=\omega(g,h)u_{g+h}.$$
The cohomology class $[\omega]\in H^2(G,\T)$ in fact gives a class in $H^1(\Z_2,H^2(G,\T))$ (see \cite[Lemma 2.6]{MR3827808}), 
which we denote by $\mathfrak{c}^{1,2}(\cC)$. 

\begin{definition} Let $\cC\subset \End(M)$ be a fusion category with abelian $G$ satisfying 
Eq.(\ref{irreducibles}), (\ref{fusion1}), (\ref{fusion2}). 
If the cohomology classes $\mathfrak{c}^{0,3}(\cC)$ and $\mathfrak{c}^{1,2}(\cC)$ vanish, we say that $\cC$ is a 
generalized Haagerup category. 
\end{definition}

Generalized Haagerup categories are completely classified by the solutions of the following polynomial equations 
up to an appropriate equivalence relation (see \cite[Theorem 5.7]{MR3827808}). 
Let $d=\frac{|G|+\sqrt{|G|^2+4}}{2}$, where $|G|$ is the order of $G$. 
We consider $\epsilon_h(g)\in \{1,-1\}$, $\eta_g\in \T$, and $A_g(h,k)\in \C$ for $g,h,k\in G$ 
satisfying the following condition: 
\begin{equation}\label{cocycle}\epsilon_{h+k}(g)=\epsilon_h(g)\epsilon_k(g+2h),\quad \epsilon_h(0)=1,
\end{equation}
\begin{equation}\label{eta}
\eta_{g+2h}=\eta_g,\quad \eta_g^3=1,
\end{equation}
\begin{equation}\label{Orth1}
\sum_{h\in G}A_g(h,0)=-\frac{\overline{\eta_g}}{d},
\end{equation}
\begin{equation}\label{Orth2}
\sum_{h\in G}A_g(h-g,k)\overline{A_{g'}(h-g',k)}=\delta_{g,g'}-\frac{\overline{\eta_g}\eta_{g'}}{d}\delta_{k,0},
\end{equation}
\begin{equation}\label{2hshift}
A_{g+2h}(p,q)=\epsilon_h(g)\epsilon_h(g+p)\epsilon_h(g+q)\epsilon_h(g+p+q)A_g(p,q),
\end{equation}
\begin{equation}\label{CC}
A_g(k,h)=\overline{A_g(h,k)},
\end{equation}
\begin{align}\label{Z3symm}
A_g(h,k)&=A_g(-k,h-k)\eta_g\epsilon_{-k}(g+h)\epsilon_{-k}(g+k)\epsilon_{-k}(g+h+k)\\
&=A_g(k-h,-h)\overline{\eta_g}\epsilon_{-h}(g+h)\epsilon_{-h}(g+k)\epsilon_{-h}(g+h+k),\nonumber
\end{align}
\begin{align}\label{hkshift}
A_g(h,k)&=A_{g+h}(h,k)\eta_g\eta_{g+k}\overline{\eta_{g+h}}\overline{\eta_{g+h+k}}\epsilon_h(g)\epsilon_h(g+k)\\
&=A_{g+k}(h,k)\overline{\eta_g\eta_{g+h}}\eta_{g+k}\eta_{g+h+k}\epsilon_k(g)\epsilon_k(g+h),\nonumber
\end{align}
\begin{align}\label{AAA}
\lefteqn{
\sum_{l\in G}A_g(x+y,l)A_{g-p+x}(-x,l+p)A_{g-q+x+y}(-y,l+q)} \\
&=A_g(p+x,q+x+y)A_{g-p}(q+y,p+x+y)\nonumber\\
&\times \eta_g\eta_{g+q+x}\eta_{g+p+q+y}\overline{\eta_{g+p}\eta_{g+x+y}\eta_{g+q+x+y}}\nonumber \\ 
&\times \epsilon_p(g-p+x)\epsilon_{p+x}(g-p+q+y)\epsilon_q(g-q+x+y)\epsilon_{q+y}(g-q+x)\nonumber\\
&-\frac{\delta_{x,0}\delta_{y,0}}{d}\eta_g\eta_{g+p}\eta_{g+q}.\nonumber
\end{align}

These numerical invariants arise as follows (see \cite[Section 3]{MR3827808}). 
Let $\cC\subset \End(M)$ be a generalized Haagerup category with $G$. 
Since $\mathfrak{c}^{1,2}(\cC)=0$, we can choose $\rho$ and $\alpha_g$ satisfying the relation 
$$\alpha_g\circ \rho=\rho\circ \alpha_{-g}.$$
Let $G_2=\{g\in G;2g=0\}$. 
Then thanks to the above relation, we have $\alpha_g((\rho,\rho^2))=(\rho,\rho^2)$ for $g\in G_2$. 
By replacing $\rho$ with an equivalent endomorphism, we may further assume that $\alpha_g$ for $g\in G_2$ 
acts on $(\rho,\rho^2)$ trivially. 
Then we can choose bases of intertwiner spaces consisting of isometries $s\in (\id, \rho^2)$, $t_g\in (\alpha_g\circ \rho,\rho^2)$ 
satisfying 
\begin{equation}\label{alpha}
\alpha_h(s)=s,\quad \alpha_h(t_g)=\epsilon_h(g)t_{g+2h},
\end{equation}
\begin{equation}\label{rho1}
\rho(s)=\frac{1}{d}s+\frac{1}{\sqrt{d}}\sum_{g\in G}t_gt_g,
\end{equation}
\begin{equation}\label{rho2}
\alpha_g\circ \rho(t_g)=\eta_gt_gss^*+\frac{\overline{\eta_g}}{\sqrt{d}}st_g^*+
\sum_{h,k\in G}A_g(h,k)t_{g+h}t_{g+h+k}t_{g+k}^*.
\end{equation}

Recall that the Cuntz algebra $\cO_n$ for an integer $n$ larger than 1 is the universal C$^*$-algebra with generators 
$\{s_i\}_{i=1}^n$ satisfying the relations 
$$s_i^*s_j=\delta_{i,j}1,$$
$$\sum_{i=1}^ns_is_i^*=1.$$
Note that the above isometries $\{s\}\cup\{t_g\}_{g\in G}$ satisfy the $\cO_{|G|+1}$-relation. 

Conversely, assume that we are given a solution $(\epsilon_h(g),\eta_g,A_g(h,g))$ of Eq.(\ref{cocycle})-(\ref{AAA}) 
(without knowing that it comes from a generalized Haagerup category). 
We consider the Cuntz algebra $\cO_{|G|+1}$ with the canonical generators $\{s\}\cup\{t_g\}_{g\in G}$. 
Then we can introduce a $G$-action $\alpha$ on $\cO_{|G|+1}$ and an endomorphism $\rho$ of $\cO_{|G|+1}$ 
by Eq.(\ref{alpha})-(\ref{rho2}), which satisfy $\alpha_g\circ \rho=\rho\circ \alpha_{-g}$ and 
$$\rho^2(x)=sxs^*+\sum_{g\in G}t_g\alpha_g\circ \rho(x)t_g^*.$$ 
Taking the weak closure of $\cO_{|G|+1}$ in an appropriate representation, we get a type III factor $M$ 
and a generalized Haagerup category $\cC\subset \End(M)$ generated by (extensions of) $\alpha_g$ and $\rho$ 
(see \cite[Theorem 4.1, Theorem 10.2]{MR3827808}). 

From now on, whenever we discuss a generalized Haagerup category $\cC$ with a finite abelian group $G$, 
we choose and fix $\alpha_g$, 
$\rho$, and $\{s\}\cup\{t_g\}_{g\in G}\subset M$ satisfying Eq.(\ref{alpha})-(\ref{rho2}). 

When a generalized Haagerup category comes from a generalized Haagerup subfactor (called a $3^G$ subfactor in \cite{MR3827808}), the object 
$\id\oplus \rho$ has a $Q$-system. 
It is shown in \cite[Section 7]{MR1832764} that $\id\oplus \rho$ has a $Q$-system if and only if the following holds:
\begin{equation}\label{Q1}
A_0(h,0)=\delta_{h,0}-\frac{1}{d-1}.
\end{equation}

In concrete examples, it is often the case that a solution of 
(\ref{cocycle})-(\ref{AAA}) and (\ref{Q1}) automatically satisfies 
\begin{equation}\label{Q2}
A_g(h,0)=\delta_{h,0}-\frac{1}{d-1}, 
\end{equation}
for any $g,h\in G$. 
In other words, once $\id\oplus \rho$ has a $Q$-system, so does any other $\id\oplus \alpha_g\circ \rho$ 
in known examples.

Under Eq.(\ref{Q2}), we get $\eta_g=1$ from Eq.(\ref{Z3symm}), and Eq.(\ref{2hshift}) implies that 
the map $G\ni g\mapsto \epsilon_h(g)\in \{1,-1\}$ is a character for any $h\in G_2$. 
In particular, it makes sense to say that $(\epsilon_h(g),A_g(h,k))$ is a solution of Eq.(\ref{cocycle})-(\ref{AAA}) and Eq.(\ref{Q2}) 
omitting $\eta_g$. 

Solutions to Eq.(\ref{cocycle})-(\ref{AAA}) and Eq.(\ref{Q2})  have been computed for a number of small groups. We list the known solutions, up to group automorphism and gauge equivalence. For $G=\Z_2$ there is a unique solution corresponding to the even part of the $A_7$ subfactor. For $ \Z_3$ there is a unique solution, corresponding to the Haagerup subfactor. In \cite{MR1832764} a unique solution was found for $\Z_5 $. In \cite{MR2837122}  Evans and Gannon found a unique solution for $\Z_7 $, exactly two solutions for $\Z_9 $, and no solutions for $ \Z_3 \times \Z_3$; they also found numerical evidence for solutions for several larger odd cyclic groups. In \cite{MR3827808} unique solutions were found for $\Z_4 $ and $\Z_2 \times \Z_2 $. In \cite{MR3859276} a solution was found for $\Z_4 \times \Z_2 $. In working through the examples in this paper we found that there exactly two solutions each for $\Z_6 $ and $\Z_{10} $, as well as at least one solution for $\Z_8 $; these solutions are contained in the accompanying Mathematica notebook \texttt{solutions.nb}. 

\subsection{(de)-Equivariantization and orbifolds}
Let $G$ is a finite group acting on a fusion category $\cC $by tensor autoequivalences. Then one can define the notion of a $G$-equivariant object in $ \cC$. The category of $G$-equivariant objects is a fusion category, called an equivariantization of $\cC $ by $G$, and denoted $\cC^G $. There is an inverse construction called de-equivariantization, by which $\cC $ may be recovered from $\cC^G $. We refer the reader to \cite{MR2609644} for details.
In addition to the equivariantization (which can be thought of as taking the ``fixed points'' of the $G$-action), one can also construct the crossed product $\cC \rtimes G $. The crossed product is a quasi-trivial $G$-graded extenension of $\cC $ which is Morita equivalent to $\cC^G $ \cite{MR2480712}.

For a fusion category $ \cC$ embedded in $\End(M) $, both equivariantization and de-equivariantization can sometimes be realized by an orbifold construction, in which the von Neumann $M$ is enlarged to a crossed product by a group action, and the endomorphisms/objects of $ \cC$ are extended to the larger algebra.

For generalized Haagerup categories, two specific types of orbifolds were constructed in \cite[Section 8]{MR3827808}. We briefly recall these constructions, and refer the reader there for more details.

\subsubsection{De-equivariantization}
Let $\mathcal{C}$ be a generalized Haagerup category embedded in $\End(M)$. Suppose there is a $z \in G_2 $ such that $\epsilon_z(\cdot) $ is a character satisfying $\epsilon_z(z)=1 $. Let $P=M \rtimes_{\alpha_z} \Z_2  $ be the crossed
product of $M$ by $ \alpha_z$. Then $P$ is the von Neumann algebra generated by $M$ and a unitary $\lambda $ satisfying $\lambda^2=1 $ and $$\lambda x \lambda^{-1}=\alpha_z(x), \forall x \in M .$$  Each $ \alpha_g$ can be extended to an automorphism
$ \tilde{\alpha}_g$ of $P$ by setting 
$$ \tilde{\alpha}_g(\lambda)=\epsilon_z(g) \lambda .$$ 
Similarly, $\rho $ can be extended to an endomorphism $\tilde{\rho} $ of $P$ by 
setting $$\tilde{\rho} (\lambda) =\lambda.$$ Then $g \mapsto \tilde{\alpha}_g$ defines an action of $G$ on $P$,  
and we have $$\tilde{\alpha}_g \circ \tilde{\rho}=\tilde{\rho} \circ \tilde{\alpha}_{-g}, \ \forall g \in G .$$

Moreover, $$ [\tilde{\alpha}_g]=[\tilde{\alpha}_h] \text{ iff } g-h \in \{ 0,z\} ,$$ and if $G_0 \subset G$ is a set of representative elements
for the $\{0,z\} $-cosets of $G$, we have $$[\tilde{\rho}^2] = 
[id]\bigoplus_{g \in G_0} 2[\tilde{\alpha}_{g} \tilde{\rho}]  .$$ 

The fusion category in $\End(P) $ tensor generated by $\tilde{\rho} $ is a $\Z_2 $-de-equivariantization of $\cC$.

\subsubsection{Equivariantization} Let $\mathcal{C}$ again be a generalized Haagerup category, and let $\theta $ be an automorphism of $G$ which preserves the structure of $\cC $:
 $$\epsilon_{\theta(h)}(\theta(g))=\epsilon_h(g),  $$ $$\eta_{\theta(g)} =\eta(g),$$ $$  A_{\theta(g)}(\theta(h),\theta(k))=A_g(h,k), \ \forall g,h,k \in G .$$
Define an automorphism $\gamma $ on $M$ by $\gamma(s)=s$ and $\gamma(t_g)=t_{\theta(g)} $, $g \in G $.  Then $$\gamma \circ \rho = \rho \circ \gamma  $$ and $$\gamma \circ \alpha_g  = \alpha_{\theta(g)} \circ \gamma .$$ The automorphism $\gamma $ thus induces an action of $\Z_m $ on $\mathcal{C} $, where $m$ is the order of $\theta $. Let $P=M \rtimes_{\gamma} \Z_m $ be the crossed
product of $M$ by $ \gamma$. Then $P$ is the von Neumann algebra generated by $M$ and a unitary $\lambda $ satisfying $$\lambda^m=1 \text{ and }  \lambda x \lambda^{-1}=\gamma(x), \ \forall x \in M .$$ We can extend $\rho $ to an endomorphism $\tilde{\rho} $ of $P $ by setting $$\tilde{\rho}(\lambda)=\lambda .$$ Then the category $\mathcal{C}^{\gamma}$ in  $\End(P)$ tensor generated by $\tilde{\rho} $ is a $\Z_m$-equivariantization of $\mathcal{C}$. The corresponding crossed product category is the category in $\End(M) $ tensor generated by $ \cC$ and $\gamma $.

%
%

%
%

\section{Restriction of $G_2$} \label{restrict}
We fix a generalized Haagerup category $\cC\subset \End(M)$ with a finite abelian group $G$. 
We choose $\alpha$, $\rho$, $s$, and $t_g$ satisfying 
Eq.(\ref{cocycle})-(\ref{rho2}) for $\cC$. 

Recall $G_2=\{g\in G;\; 2g=0\}$. 
We denote by $\hG$ the dual group of $G$. 
Let $(\hG)_2=\{\chi\in \hG;\; 2\chi=0\}$. 
We choose $G_*\subset G$ and $\hG_*\subset \hG$ satisfying 
$$G=G_2\sqcup G_*\sqcup (-G_*),$$
$$\hG=(\hG)_2\sqcup \hG_*\sqcup (-\hG_*).$$
Let $\Lambda$ be the global dimension of $\cC$:
$$\Lambda=|G|+|G|d^2=|G|(2+|G|d)=|G|\frac{|G|^2+4+|G|\sqrt{|G|^2+4}}{2}.$$
Letting $a=1/|G|$ and $b=1/\sqrt{|G|^2+4}$, we get a simple expression 
$$\frac{1}{\Lambda}=\frac{a-b}{2}.$$
To avoid heavy notation, we often omit $\alpha$ and write $\cA_g$ for $\cA_{\alpha_g}$, and similarly
${}_g\rho$ for $\alpha_g\circ \rho$.

\begin{lemma} \label{mulfree1}
The object $\alpha_g\circ \rho$ is multiplicity free (i.e. the algebra $\cA_{_g\rho}$ is abelian), for any $g\in G$. 
\end{lemma}

\begin{proof} It suffices to show the statement for $g=0$ because the pair $(\cC,\alpha_g\circ\rho)$ satisfies the same condition 
as the pair $(\cC,\rho)$. 
Since $\rho$ is self-conjugate, the map $S_0^2$ restricted $\cA_\rho$ is regarded as an algebra isomorphism from $\cA_\rho$ to its opposite algebra. 
On the other hand, we claim that it is the identity on $\cA_\rho$, which would show that $\cA_\rho$ is abelian. 

Indeed $\alpha_k(s)=s$ implies that $S_0^2$ acts on $(\rho\;k|1|k\;\rho)$ and $(\rho\;_k\rho|ss^*|_k\rho\;\rho)$ as the identity. 
We also have 
\begin{align*}
\lefteqn{S_0^2((\rho\;_g\rho|t_{h+g}t_{h-g}^*|_g\rho\;\rho))} \\
 &=dS_0((_g\rho\;\rho|s^*\alpha_g\circ \rho(t_{h+g}t_{h-g}^*\rho(s))|\rho\;_g\rho)) \\
 &=\sqrt{d}S_0((_g\rho\;\rho|s^*\alpha_g\circ \rho(t_{h+g}t_{h-g})|\rho\;_g\rho)) \\
 &=d^{3/2}((\rho\;_g\rho|s^*\rho(s^*\alpha_g\circ \rho(t_{h+g}t_{h-g}s))|_g\rho\;\rho)),\\
\end{align*}
and

\begin{align*}
\lefteqn{d^{3/2}s^*\rho(s^*\alpha_g\circ \rho(t_{h+g}t_{h-g}s))} \\
 &=d^{3/2}\epsilon_{-h}(g+h)s^*\rho(s^*\alpha_{g-h}\circ\rho(t_{g-h})\alpha_g\circ\rho(t_{h-g}s)) \\
 &=d\epsilon_{-h}(g+h)s^*\rho(t_{g-h}^*)\rho\circ\alpha_g\circ\rho(t_{h-g}s)  \\
 &=d\epsilon_{-h}(g+h)\epsilon_{g-h}(h-g) s^*\alpha_{h-g}\circ\rho(t_{h-g}^*)\rho^2\circ\alpha_{-g}(t_{h-g}s) \\
 &=d\epsilon_g(h-g) s^*t_{h-g}^*\rho^2\circ\alpha_{-g}(t_{h-g}s)\\
 &=d\epsilon_g(h-g) s^*\alpha_h\circ\rho(t_{h-g}s)t_{h-g}^*\\
 &=d s^*\alpha_{h+g}\circ\rho(t_{h+g}s)t_{h-g}^*\\
 &=d^{1/2} t_{h+g}^*\rho(s)t_{h-g}^*\\
 &=t_{h+g}t_{h-g}^*.
\end{align*}
 Since elements of the above form span $\cA_{\rho}$, the claim is shown.
\end{proof}

In what follows, we assume Eq.(\ref{Q2}). 
It is straightforward to show the following.  

\begin{lemma}\label{lifts} For $k\in G_2$, the object $\alpha_k$ has a unique half-braiding $\cE_k$, and it is given by 
$\cE_k(g)=\epsilon_k(g)$ and $\cE_k(_g\rho)=\epsilon_k(g)$.  
\end{lemma}

The above lemma implies that $\alpha_k$ for $k\in G_2$ has a unique extension $\widetilde{\alpha_k}$ to the Drinfeld center $\cZ(\cC)$, 
which we often denote by $k$ for simplicity. 
Thanks to Eq.(\ref{mincenpr}), the corresponding minimal central projection in $\Tube \cC$ is   
$$z(k)=\frac{1}{\Lambda}\sum_{g\in G}(\epsilon_k(g)(k\;g|1|g\;k)+d\epsilon_k(g)(k\;_g\rho|1|_g\rho\;k))\in \cA_k.$$

We determine the algebra structure of $\cA_k$ for $k\in G_2$ and 
$$\cB_g:=\cA_g\oplus\cA_{g,-g}\oplus \cA_{-g,g}\oplus \cA_{-g}$$
for $g\in G_*$ following \cite{MR1832764}. 
For $g\in G$ and $\tau\in \hG$, let 
$$p(g,\tau)=\frac{1}{|G|}\sum_{g\in G}\inpr{h}{\tau}(g\;h|1|h\;g)\in \cA_g,$$
which is a projection in $\cA_g$.   
We have 
$$p(g,\tau)(g\;\rho|1|\rho\;-g)=(g\;\rho|1|\rho\;-g)p(-g,-\tau).$$
Recall that for $k\in G_2$, the map $G\ni g\mapsto \epsilon_k(g)\in \{1,-1\}$ is a character, 
and we can regard $\epsilon_k$ as an element in $(\hG)_2$. 
Thus we have the following expression 
$$z(k)=\frac{|G|}{\Lambda}p(k,\epsilon_k)(1_k+d(k\; \rho|1|\rho\;k))=\frac{|G|}{\Lambda}(1_k+d(k\; \rho|1|\rho\;k))p(k,\epsilon_k).$$

We first determine the structure of $\cA_0$. 
For $\chi\in (\hG)_2$, the projection $p(0,\chi)$ is central in $\cA_0$, and 
$p(0,\chi)\cA_0$ is a 2-dimensional algebra spanned by $p(0,\chi)$ and $p(0,\chi)(0\;\rho|1|\rho\;0)$ with 
\begin{align*}
\lefteqn{(p(0,\chi)(0\;\rho|1|\rho\;0))^2} \\
 &=p(0,\chi)+\sum_{g\in G}p(0,\chi)(0\;_g\rho|1|_g\rho\;0)\\
 &=p(0,\chi)+\sum_{g\in G}\inpr{g}{\chi}p(0,\chi)(0\;\rho|1|\rho\;0) \\
 &=
\left\{
\begin{array}{ll}
p(0,0)+|G|p(0,0)(0\;\rho|1|\rho\;0) , &\quad \chi=0 \\
p(0,\chi) , &\quad \chi\neq 0
\end{array}
\right..
\end{align*}

Let 
$$E(0,0)=\frac{|G|d}{\Lambda}p(0,0)(d1_0-(0\;\rho|1|\rho\;0)),$$
which is a minimal projection in $\cA_0$ orthogonal to $z(0)$, and to 
$p(0,\tau)$ for any $\tau\in \hG\setminus \{0\}$. 

For $\chi\in (\hG)_2\setminus\{0\}$, we set 
$$E(0,\chi)_{\pm}=\frac{1}{2}p(0,\chi)(1_0\pm (0\;\rho|1|\rho\;0)),$$
which are minimal central projections in $\cA_0$. 

For $\tau\in \hG_*$, we set 
$$E(0,\tau)_{11}=p(0,\tau),\quad E(0,\tau)_{22}=p(0,-\tau),$$
$$E(0,\tau)_{12}=p(0,\tau)(0\;\rho|1|\rho\;0),$$
$$E(0,\tau)_{21}=p(0,-\tau)(0\;\rho|1|\rho\;0),$$
and set $\cA_0^\tau=\mathrm{span}\{E(0,\tau)_{ij}\}_{1\leq i,j,\leq 2}$. 
Then $\cA_0^{\tau}$ is isomorphic to the 2 by 2 matrix algebra with a system 
of matrix units $\{E(0,\tau)_{ij}\}$. 
Then we get the following decomposition of $\cA_0$ as an algebra: 
$$\cA_{0}=\C z(0)\oplus \C E(0,0)\oplus 
\bigoplus_{\chi\in (\hG)_2\setminus\{0\}}(\C E(0,\chi)_+\oplus \C E(0,\chi)_{-})
\oplus \bigoplus_{\tau\in G_*}\cA^\tau_0.$$

For $k\in G_2$, the algebra $\cA_k$ has a similar decomposition as $k$ extends to $\cZ(\cC)$. 
For $\tau\in \hG_*$, we set 
$$E(k,\tau)_{11}=p(k,\tau),\quad E(k,\tau)_{22}=p(k,-\tau),$$
$$E(k,\tau)_{12}=p(k,\tau)(k\;\rho|1|\rho\;k),$$
$$E(k,\tau)_{21}=p(k,-\tau)(k\;\rho|1|\rho\;k),$$
and set $\cA_k^\tau=\mathrm{span}\{E(k,\tau)_{ij}\}_{1\leq i,j,\leq 2}$. 
Then $\cA_k^{\tau}$ is a simple component of $\cA_k$ isomorphic to the 2 by 2 matrix algebra with a system 
of matrix units $\{E(k,\tau)_{ij}\}_{ij}$. 
 
For $g\in G\setminus G_2$, we have decomposition
$$\cA_g=\bigoplus_{\tau \in \hG}\C p(g,\tau).$$
For $\tau\in \hG$, we set 
$$E(g,\tau)_{11}=p(g,\tau),\quad E(g,\tau)_{22}=p(-g,-\tau),$$
$$E(g,\tau)_{12}=p(g,\tau)(g\;\rho|1|\rho\;-g)=(g\;\rho|1|\rho\;-g)p(-g,-\tau),$$
$$E(g,\tau)_{21}=p(-g,-\tau)(-g\;\rho|1|\rho\;g)=(-g\;\rho|1|\rho\;g)p(g,\tau).$$
Let $\cB^\tau_g=\mathrm{span}\{E(g,\tau)_{ij}\}_{ij}$. 
Then $\cB^\tau_g$ is isomorphic to the 2 by 2 matrix algebra with a system of matrix units $\{E(g,\tau)_{ij}\}_{ij}$, and 
$$\cB_g=\bigoplus_{\tau\in \hG}\cB^\tau_g.$$

Next we determine the action of $\cA_0$ on $\cA_{0,_g\rho}$. 
Since $\cA_{_g\rho}$ is multiplicity free, Lemma \ref{mulfree1} shows that every irreducible $\cA_0$-module 
can appear in $\cA_{0,_g\rho}$ with multiplicity at most one. 
Thus it suffices to known whether each simple component of $\cA_0$ acts on $\cA_{0,_g\rho}$ trivially or not.   
Since $\cA_{0,_{g\rho}}$ has a basis $\{(0\;h|1|h\;0)(0\;\rho|t_{-g}|\rho\;_{g}\rho)\}_{h\in G}$, 
it also has a basis $\{p(0,\tau)(0\;\rho|t_{-g}|\rho\; _g\rho)\}_{\tau \in \hG}$. 
Since $z(0)$ is central in $\Tube \cC$, it acts on $\cA_{0,_g\rho}$ trivially. 
The algebras $\C E(0,0)$ and $\cA_0^\tau$ for $\tau\in \hG_*$ act on $\cA_{0,_g\rho}$ non-trivially. 
It remains to show how $\C E(0,\chi)_{\pm}$ acts on $p(0,\chi)(0\;\rho|t_{-g}|\rho\; _g\rho)$ for $\chi\in (\hG)_2\setminus \{0\}$. 

\begin{lemma} For $\chi\;\in (\hG)_2\setminus \{0\}$, 
$$(0\;\rho|1|\rho\;0)p(0,\chi)(0\;\rho|t_{-g}|\rho\; _g\rho)=\inpr{g}{\chi}p(0,\chi)
(0\;\rho|t_{-g}|\rho\; _g\rho).$$
\end{lemma}

\begin{proof} 
Since $(0\;\rho|0|\rho\;0)$ commutes with $p(0,\chi)$ as $\chi \in (\hG)_2$, 
\begin{align*}
\lefteqn{(0\;\rho|1|\rho\;0)p(0,\chi)(0\;\rho|t_{-g}|\rho\; _g\rho)} \\
 &=p(0,\chi)(0\;\rho|1|\rho\;0)(0\;\rho|t_{-g}|\rho\; _g\rho) \\
 &=p(0,\chi)\sum_{h\in G}(0\;_h\rho|t_h^*\rho(t_{-g})t_h|_h\rho\;_g\rho)\\
 &=p(0,\chi)\sum_{h\in G}A_g(h-g,h-g)\epsilon_{-g}(g)(0\;_h\rho|t_{2h-g}|_h\rho\;_g\rho)\\
 &=p(0,\chi)\sum_{h\in G}A_g(h-g,h-g)\epsilon_{-g}(g)\epsilon_{h}(-g)(0\;h|1|h\;0)(0\;\rho|t_{-g}|\rho\;_g\rho)\\
 &=\sum_{h\in G}A_g(h-g,h-g)\epsilon_{h-g}(g)\inpr{h}{\chi}p(0,\chi)(0\;\rho|t_{-g}|\rho\;_g\rho)\\
 &=\inpr{g}{\chi}\sum_{h\in G}A_g(h,h)\epsilon_{h}(g)\inpr{h}{\chi}p(0,\chi)(0\;\rho|t_{-g}|\rho\;_g\rho).
\end{align*}
Eq.(\ref{Z3symm}) and Eq.(\ref{Q2}) imply 
$$\sum_{h\in G}A_g(h,h)\epsilon_{h}(g)\inpr{h}{\chi}=\sum_{h\in G}(\delta_{h,0}-\frac{1}{d-1})\inpr{h}{\chi}=1.$$
\end{proof}

The above lemma shows 
$$E(0,\chi)_{\inpr{g}{\chi}}p(0,\chi)(0\;\rho|t_{-g}|\rho\; _g\rho)=p(0,\chi)(0\;\rho|t_{-g}|\rho\; _g\rho),$$
and 
$$E(0,\chi)_{-\inpr{g}{\chi}}\cA_{0,_g\rho}=\{0\}.$$

For $h\in G\setminus G_2$, we can easily determine the $\cA_h$-action on $\cA_{h,_g\rho}$ from Lemma \ref{mulfree} 
as we have $\dim \cA_h=\dim \cA_{h,_g\rho}=|G|$. 
Namely, every simple component $\C p(h,\tau)$ acts on $\cA_{h,_g\rho}$ with multiplicity one. 

Summing up the above argument, we get 
\begin{lemma} Let the notation be as above, and let $k\in G_2$. 
\begin{itemize}
\item [(1)] Let 
$$\pi=\id\oplus \bigoplus_{g\in G}\alpha_g\circ \rho.$$
The object $\pi$ has a unique half-braiding, which gives $e(\widetilde{\pi})_{0,0}=E(0,0)$, and 
$$\cE_\pi(h)_{0,0}=1,$$
$$\cE_\pi(_h\rho)_{0,0}=-\frac{1}{d^2}.$$
More generally $\alpha_k\pi$ has a unique half-braiding $\cE_{k\pi}(\xi)=\cE_k(\xi)\alpha_k(\cE_{\pi}(\xi))$.  

\item [(2)] 
For $\chi\in (\hG)_2\setminus\{0\}$, let  
$$\varphi_{\chi,\pm}=\id\oplus\bigoplus_{\inpr{g}{\chi}=\pm1}\alpha_g\circ \rho.$$
The object $\varphi_{\chi,+}$ has a unique half-braiding, which gives $e(\widetilde{\varphi_{\chi,+}})_{0,0}=E(0,\chi)_+$, 
and $\varphi_{\chi,-}$ has a unique half-braiding, which gives $e(\widetilde{\varphi_{\chi,-}})_{0,0}=E(0,\chi)_-$.
The corresponding half-braidings are  
$$\cE_{\varphi_{\chi,\pm}}(h)_{0,0}=\inpr{h}{\chi},$$
$$\cE_{\varphi_{\chi,\pm}}(_h\rho)_{0,0}=\frac{\pm\inpr{h}{\chi}}{d}.$$
More generally $\alpha_k\varphi_{\chi,\pm}$ has a unique half-braiding $\cE_{k\varphi_{\xi,\pm}}(\xi)=\cE_k(\xi)\alpha_k(\cE_{\varphi_{\chi,\pm}}(\xi))$. 

\item [(3)] Let 
$$\sigma_k=\alpha_k\oplus \alpha_k \oplus \bigoplus_{g\in G}\alpha_g\circ \rho.$$
The object $\sigma_k$ has exactly $(|G|-|G_2|)/2$ half-braidings parametrized by $\tau\in \hG_*$, 
which give $e(\widetilde{\sigma_k}^\tau)_{(k,i),(k,j)}=E(k,\tau)_{ij}$, and 
$$\cE_{\sigma_k}^\tau(h)_{(k,1),(k,1)}=\inpr{h}{\tau},$$
$$\cE_{\sigma_k}^\tau(_h\rho)_{(k,i),(k,i)}=0.$$ 

\item [(4)] For $g\in G_*$, let 
$$\sigma_g=\alpha_g\oplus \alpha_{-g}\oplus\bigoplus_{h\in G}\alpha_h\circ \rho.$$
The object $\sigma_g$ has exactly $|G|$ half-braidings parametrized by $\tau\in \hG$, which give 
$e(\widetilde{\sigma_g}^\tau)_{g,g}=p(g,\tau)$, and 
$$\cE_{\sigma_g}^\tau(h)_{g,g}=\inpr{h}{\tau},$$
$$\cE_{\sigma_g}^\tau(_h\rho)_{g,g}=0.$$ 
\end{itemize}
\end{lemma}

The algebra $\cA_\rho$ has a basis 
$$\{(\rho\;k|1|k\;\rho)\}_{k\in G_2}\cup\{(\rho\;_k\rho|ss^*|_k\rho\;\rho)\}_{k\in G}\cup
\{(\rho\;_g\rho|t_{h+g}t_{h-g}^*|_g\rho\;\rho)\}_{g,h},$$
and $\dim \cA_\rho=|G|^2+2|G_2|$. 
In a similar way, we can see 
$$\dim\cA_{_g\rho,\rho}=\left\{
\begin{array}{ll}
|G|^2+2|G_2| , &\quad g\in 2G  \\
|G|^2 , &\quad g\in G\setminus 2G
\end{array}
\right..
$$
There exists an invertible element $x=(\rho\;-g|1|-g\;_{2g}\rho)\in \cA_{\rho,_{2g}\rho}$ in the sense that it satisfies 
$x^*x=1_{_{2g}\rho}$ and $xx^*=1_\rho$. 

\begin{theorem} Let $\cC\subset \End(M)$ be a generalized Haagerup category with a finite abelian group $G$ 
satisfying Eq.(\ref{Q2}). 
Then $|G_2|\leq 4$. 
\end{theorem}

\begin{proof} If $G$ is an odd group, we have nothing to prove, and we assume that $G$ is an even group. 
Let
\begin{align*}
\lefteqn{Z_G} \\
 &=\sum_{k\in G_2}\big(z(k)+z(\widetilde{k\pi})+\sum_{\chi\in ((\hG)_2)\setminus \{0\}}
(z(\widetilde{k\varphi_{\chi,+}})+z(\widetilde{k\varphi_{\chi,-}})\big)+\sum_{\tau\in \hG_*}z(\widetilde{\sigma_k}^\tau)\big) \\
 &+\sum_{g\in G_*}\sum_{\tau\in \hG}z(\widetilde{\sigma_g}^\tau).
\end{align*}
Then 
\begin{align*}
\lefteqn{\dim (1-Z_G)\cA_\rho=\dim \cA_\rho-|G_2|(1+|(\hG)_2|-1+|\hG_*|)-|G_*||\hG|} \\
 &=|G|^2+2|G_2|-(|G_2|^2+(|G|+|G_2|)\frac{|G|-|G_2|}{2})=|G|^2+2|G_2|-\frac{|G|^2+|G_2|^2}{2} \\
 &=\frac{|G|^2}{2}-\frac{|G_2|^2}{2}+2|G_2|.
\end{align*}
On the other hand, if $g\in G\setminus 2G$, we get 
\begin{align*}
\lefteqn{\dim (1-Z_G)\cA_{_g\rho,\rho}}\\
&=\dim \cA_{_g\rho,\rho}
-|G_2|(1+\#\{\chi\in (\hG)_2\setminus \{0\};\;\inpr{g}{\chi}=\inpr{0}{\chi}\}+|\hG_*|)-|G_*||\hG| \\
 &=|G|^2-|G_2|(1+\#\{\chi\in (\hG)_2\setminus \{0\};\;\inpr{g}{\chi}=1\}-\frac{|G|^2-|G_2|^2}{2}\\
 &=\frac{|G|^2+|G_2|^2}{2}-|G_2|(1+\#\{\chi\in (\hG)_2\setminus \{0\};\;\inpr{g}{\chi}=1\}.
\end{align*}
Note that the group $(\hG)_2$ is identified with the dual group of $G/2G\cong \Z_2^m$, and 
$g+2G$ in $G/2G$ is not 0. 
Thus we get 
$$\#\{\chi\in (\hG)_2\setminus \{0\};\;\inpr{g}{\chi}=1\}=\frac{|(\hG)_2|}{2}-1=\frac{|G_2|}{2}-1,$$
and $\dim (1-Z_G)\cA_{_g\rho,\rho}=\frac{|G|^2}{2}$. 
Since $\rho$ and $\alpha_g\circ \rho$ are multiplicity free, Lemma \ref{mulfree} implies  
$$\dim(1-Z_G)\cA_{_g\rho,\rho}\leq \dim (1-Z_G)\cA_\rho.$$ 
This is possible only if $|G_2|\leq 4$. 
\end{proof}

The above computation shows the following.  
If $|G_2|=2$, we have 
$$\dim(1-Z_G)\cA_\rho=\frac{|G|^2}{2}+2,$$
and for $g\in G\setminus 2G$, we have 
$$\dim (1-Z_G)\cA_{_g\rho,\rho}=\frac{|G|^2}{2}.$$
This means that if we set 
$$\mu=\bigoplus_{g\in G}\alpha_g\rho,$$
$$\nu_{+}=\bigoplus_{g\in 2G}\alpha_g\rho,$$
$$\nu_{-}=\bigoplus_{g\in G\setminus 2G}\alpha_g\rho,$$
The object $\mu$ has exactly $|G|^2/2$ half-braidings $\{\cE_\mu^i\}_{i\in I}$, and each of $\nu_+$ and $\nu_{-}$ has exactly 
two half-braidings $\{\cE_{\nu_\pm}^j\}_{j=0,1}$.

If $|G_2|=4$, we have 
$$\dim (1-Z_G)\cA_\rho=\dim (1-Z_G)\cA_{_g\rho,\rho}=\frac{|G|^2}{2},$$
for any $g\in G$. 
Thus $\mu$ has exactly $|G|^2/2$ half-braiding $\{\cE_\mu^i\}_{i\in I}$.

So far we have decided the algebra structure of $\Tube \cC$.  
Now we compute $S$ and $T$. 

\begin{lemma}
$$S_{0,0}=\frac{1}{\Lambda}=\frac{a-b}{2},\quad S_{0,\widetilde{\pi}}=\frac{1+|G|d}{\Lambda}=\frac{a+b}{2},$$
$$S_{0,\widetilde{\varphi_{\chi,\pm}}}=\frac{1+\frac{|G|}{2}d}{\Lambda}=\frac{a}{2},\quad S_{0,\widetilde{\sigma_g}^\tau}=\frac{2+|G|d}{\Lambda}=a,$$
$$S_{\widetilde{\pi},\widetilde{\pi}}=\frac{1}{\Lambda}=\frac{a-b}{2},\quad S_{\widetilde{\pi},\widetilde{\varphi_{\chi,\pm}}}=\frac{a}{2},\quad 
S_{\widetilde{\pi},\widetilde{\sigma_g}^\tau}=a,$$
$$S_{\widetilde{\varphi_{\chi_1,\varepsilon_1}},\widetilde{\varphi_{\chi_2,\varepsilon_2}}}
=\frac{a}{2}+\frac{\varepsilon_1\varepsilon_2\delta_{\chi_1,\chi_2}}{4},\quad 
S_{\widetilde{\varphi_{\chi,\pm}},\widetilde{\sigma_g}^\tau}
=a\inpr{g}{\chi},$$
$$S_{\widetilde{\sigma_g}^\tau,\widetilde{\sigma_h}^\theta}=a(\inpr{h}{\tau}\inpr{g}{\theta}+
\overline{\inpr{h}{\tau}\inpr{g}{\theta}}).$$
$$T_{0,0}=T_{\widetilde{\pi},\widetilde{\pi}}=T_{\widetilde{\varphi_{\chi,\pm}},\widetilde{\varphi_{\chi,\pm}}}=1,\quad 
T_{\widetilde{\sigma_g}^\tau,\widetilde{\sigma_g}^\tau}=\inpr{g}{\tau}.$$
\end{lemma}

\begin{proof} The computation not involving $\varphi_{\chi,\pm}$ is the same as in \cite{MR1832764}, and we compute only 
those involving $\varphi_{\chi,\pm}$. 
\begin{align*}
\lefteqn{S_{\widetilde{\varphi_{\chi,\pm}},\widetilde{\pi},}=\frac{d(\varphi_{\chi,\pm})}{\Lambda}\sum_{\xi}d(\xi)
\phi_\xi(\cE_{\pi}(0)_{\xi,\xi}^*\cE_{\varphi_{\chi,\pm}}(\xi)_{0,0}^*)} \\
 &=\frac{a}{2}(1+d\sum_{g\in G} \phi_{_g\rho}(\cE_{\pi}(0)_{_g\rho,_g\rho}^*
 \cE_{\varphi_{\chi,\pm}}(_g\rho)_{0,0}^*)\\
 &=\frac{a}{2}(1+d\sum_{g\in G} \frac{\pm \inpr{g}{\chi}}{d})=\frac{a}{2}.
\end{align*}

\begin{align*}
\lefteqn{S_{\widetilde{\varphi_{\chi_1,\varepsilon_1}},\widetilde{\varphi_{\chi_2,\varepsilon_2}},}
=\frac{d(\varphi_{\chi_1,\varepsilon_1})}{\Lambda}\sum_{\xi}d(\xi)
\phi_\xi(\cE_{\varphi_{\chi_2,\varepsilon_2}}(0)_{\xi,\xi}^*\cE_{\varphi_{\chi_1,\varepsilon_1}}(\xi)_{0,0}^*)} \\
 &=\frac{a}{2}(1+d\sum_{\inpr{g}{\chi_2}=\varepsilon_21} \phi_{_g\rho}
 (\cE_{\varphi_{\chi_2,\varepsilon_2}}(0)_{_g\rho,_g\rho}^*
 \cE_{\varphi_{\chi_1,\varepsilon_1}}(_g\rho)_{0,0}^*)\\
 &=\frac{a}{2}(1+\varepsilon_1\sum_{\inpr{g}{\chi_2}=\varepsilon_21} \inpr{g}{\chi_1}).
\end{align*}
Note that we have 
$$\sum_{\inpr{g}{\chi_2}=1}\inpr{g}{\chi_1}+\sum_{\inpr{g}{\chi_2}=-1}\inpr{g}{\chi_1}=0,$$
and 
$$\varepsilon_1\sum_{\inpr{g}{\chi_2}=\varepsilon_21} \inpr{g}{\chi_1}
=\varepsilon_1\varepsilon_2\sum_{\inpr{g}{\chi_2}=1} \inpr{g}{\chi_1}=\varepsilon_1\varepsilon_2\delta_{\chi_1,\chi_2}\frac{|G|}{2}.$$

\begin{align*}
\lefteqn{S_{\widetilde{\varphi_{\chi,\pm}},\widetilde{\sigma_0}^\tau,}
=\frac{d(\varphi_{\chi,\pm})}{\Lambda}\sum_{\xi,i}d(\xi)
\phi_\xi(\cE_{\sigma_0}^\tau(0)_{(\xi,i),(\xi,i)}^*\cE_{\varphi_{\chi,\pm}}(\xi)_{0,0}^*)} \\
 &=\frac{a}{2}(2+d\sum_{g\in G} \phi_{_g\rho}(\cE_{\sigma_0}^\tau(0)_{_g\rho,_g\rho}^*
 \cE_{\varphi_{\chi,\pm}}(_g\rho)_{0,0}^*)\\
 &=\frac{a}{2}(2+d\sum_{g\in G} \frac{\pm \inpr{g}{\chi}}{d})=a.
\end{align*}

\begin{align*}
\lefteqn{S_{\widetilde{\varphi_{\chi,\pm}},\widetilde{\sigma_g}^\tau,}
=\frac{d(\varphi_{\chi,\pm})}{\Lambda}\sum_{\xi}d(\xi)
\phi_\xi(\cE_{\sigma_g}^\tau(0)_{\xi,\xi}^*\cE_{\varphi_{\chi,\pm}}(\xi)_{0,0}^*)} \\
 &=\frac{a}{2}(\cE_{\sigma_g}^\tau(0)_{g,g}^*\cE_{\varphi_{\chi,\pm}}(g)_{0,0}^*
 +\cE_{\sigma_g}^\tau(0)_{-g,-g}^*\cE_{\varphi_{\chi,\pm}}(-g)_{0,0}^*
 +d\sum_{h\in G} \phi_{_h\rho}(\cE_{\sigma_0}^\tau(0)_{_h\rho,_h\rho}^*
 \cE_{\varphi_{\chi,\pm}}(_h\rho)_{0,0}^*)\\
 &=\frac{a}{2}(\cE_{\varphi_{\chi,\pm}}(g)_{0,0}^*+\cE_{\varphi_{\chi,\pm}}(-g)_{0,0}^*+d\sum_{h\in G} \frac{\pm \inpr{h}{\chi}}{d})
 =a\inpr{g}{\chi}.
\end{align*}
\end{proof}

For $k,l\in G_2$, we have 
$$S_{k,l}=\frac{\epsilon_k(l)\epsilon_l(k)}{\Lambda}=\frac{a-b}{2}\epsilon_k(l)\epsilon_l(k),$$
$$T_{k,k}=\epsilon_k(k).$$

For $k\in G_2$ and an irreducible object $X$ in $\cZ(\cC)$, 
we set 
$$s(k,X)=\frac{S_{k,X}}{|S_{k,X}|}.$$
Since $k$ is an invertible object in $\cZ(\cC)$, 
we get 
$$S_{kX,Y}=s(k,Y)S_{X,Y},$$
$$T_{kX,kX}=\overline{s(k,X)}T_{k,k}T_{X,X},$$
and so 
$$S_{kX,lY}=s(k,l)s(k,Y)s(l,X)S_{X,Y}=\epsilon_k(l)\epsilon_l(k)s(k,Y)s(l,X)S_{X,Y},$$
for $l\in G_2$ and $Y\in \Irr(\cZ(\cC))$. 

Note that we have 
$$s(k,\widetilde{\gamma}^i)=\cE_k(\eta)^*\cE_\gamma^i(k)_{(\eta,j),(\eta,j)}^*.$$
In particular, when $\gamma$ contains $g\in G$, we get 
$$s(k,\widetilde{\gamma}^i)=\epsilon_k(g)\cE_\gamma^i(k)_{g,g}^*,$$
and when $\gamma$ contains $\alpha_g\circ\rho$, 
$$s(k,\widetilde{\gamma}^i)=\epsilon_k(g)\cE_\gamma^i(k)_{_g\rho,_g\rho}^*.$$
These facts imply

\begin{lemma}
For $k\in G_2$,  
$$s(k,\widetilde{\pi})=1,$$
$$s(k,\widetilde{\varphi_{\chi,\pm}})=\inpr{k}{\chi},$$
$$s(k,\widetilde{\sigma_g}^\tau)=\epsilon_k(g)\inpr{k}{\tau}.$$
\end{lemma}

Let 
$$(G\times \hG)_*=(G_2\times (\hG)_*)\sqcup (G_*\times \hG).$$
Then we have 
$$G\times \hG=(G\times \hG)_2\sqcup (G\times \hG)_*\sqcup -(G\times \hG)_*.$$

\begin{theorem}\label{2}  Let $\cC\subset \End(M)$ be a generalized Haagerup category with a finite group $G$ satisfying Eq.(\ref{Q2}). 
Assume $|G_2|=2$. 
The following set exhausts the equivalence classes of the simple objects in $\cZ(\cC)$:
\begin{align*}
\lefteqn{\{k\}_{k\in G_2}\cup\{k\widetilde{\pi}\}_{k\in G_2}\cup \{k\widetilde{\varphi_{\chi_0,\varepsilon}}\}_{k\in G_2,\;\varepsilon\in \{1,-1\}}}\\
 &\cup \{\widetilde{\sigma_g}^\tau\}_{(g,\tau)\in (G\times \hG)_*}
\cup \{\widetilde{\nu_\varepsilon}^j\}_{\varepsilon\in \{1,-1\},\; j\in \{0,1\}}
\cup \{\widetilde{\mu}^i\}_{i\in I},
\end{align*}
where we use the notation $(\hG)_2=\{0,\chi_0\}$ and $I$ is an index set with $|I|=|G|^2/2$. 
Every object in $\cZ(\cC)$ is self-dual.

Let $k,l\in G_2$, and let $s(k,l)=\epsilon_k(l)\epsilon_l(k)$. 
There exist characters $G_2\ni k\mapsto s(k,\widetilde{\nu_{\pm}}^j)\in \{1,-1\}$ and 
$G_2\ni k\mapsto s(k,\widetilde{\mu}^i)\in \{1,-1\}$ satisfying 
$$S_{k,l}=s(k,l)\frac{a-b}{2},\quad S_{k.l\widetilde{\pi}}=s(k,l)\frac{a+b}{2},
\quad S_{k,l\widetilde{\varphi_{\chi_0,\pm}}}=s(k,l)\frac{a}{2}\inpr{k}{\chi_0},$$
$$ S_{k,\widetilde{\sigma_g}^\tau}=a\inpr{k}{\tau}\epsilon_k(g),
\quad S_{k,\widetilde{\nu_\pm}^j}=s(k,\widetilde{\nu_\pm}^j)\frac{b}{2},\quad S_{k,\widetilde{\mu}^i}=s(k,\widetilde{\mu}^i)b,$$
$$S_{k\widetilde{\pi},l\widetilde{\pi}}=s(k,l)\frac{a-b}{2},
\quad S_{k\widetilde{\pi},l\widetilde{\varphi_{\chi_0,\pm}}}=s(k,l)\frac{a}{2}\inpr{k}{\chi_0}, 
\quad S_{k\widetilde{\pi},\widetilde{\sigma_g}^\tau}=a\inpr{k}{\tau}\epsilon_k(g),$$
$$S_{k\widetilde{\pi},\widetilde{\nu_\pm}^j}=-s(k,\widetilde{\nu_\pm}^j)\frac{b}{2},\quad S_{k\widetilde{\pi},\widetilde{\mu}^i}=-s(k,\widetilde{\mu}^i)b,$$
$$S_{k\widetilde{\varphi_{\chi_0,\varepsilon_1}},l\widetilde{\varphi_{\chi_0,\varepsilon_2}}}
=s(k,l)\inpr{k}{\chi_0}\inpr{l}{\chi_0}(\frac{a}{2}+\frac{\varepsilon_1\varepsilon_2}{4}),$$
$$S_{k\widetilde{\varphi_{\varepsilon}},\widetilde{\sigma_g}^\tau}=a\inpr{k}{\tau}\epsilon_k(g)\inpr{g}{\chi_0},
\quad S_{k\widetilde{\varphi_{\chi_0,\varepsilon_1}},\widetilde{\nu_{\varepsilon_2}}^{j}}
=s(k,\widetilde{\nu_{\varepsilon_2}}^{j})\frac{\varepsilon_1\varepsilon_2}{4},
\quad S_{k\widetilde{\varphi_{\chi_0,\pm}},\widetilde{\mu}^{j}}=0,$$
$$S_{\widetilde{\sigma_g}^\tau,\widetilde{\sigma_h}^\theta}=a(\inpr{h}{\tau}\inpr{g}{\theta}+
\overline{\inpr{h}{\tau}\inpr{g}{\theta}}),
\quad S_{\widetilde{\sigma_g}^\tau,\widetilde{\nu_{\pm}}^k}=0,
\quad S_{\widetilde{\sigma_g}^\tau,\widetilde{\mu}^{j}}=0,$$
$$T_{k,k}=T_{k\widetilde{\pi},k\widetilde{\pi}}=\epsilon_k(k),\quad T_{k\widetilde{\varphi_{\chi_0,\pm}},k\widetilde{\varphi_{\pm}}}=\epsilon_k(k)\inpr{k}{\chi},
\quad T_{\widetilde{\sigma_g}^\tau,\widetilde{\sigma_g}^\tau}=\inpr{g}{\tau}.$$
\end{theorem}

\begin{proof} Most of the computation is similar to that in \cite{MR1832764} except for  
$$S_{\widetilde{\varphi_{\chi_0,\varepsilon}},\widetilde{\nu_+}^i}=\frac{d(\varphi_{\chi_0,\varepsilon})}{\Lambda}
\sum_{g\in 2G}d\phi_{_g\rho}(\cE_{\nu_+}^i(0)_{_g\rho,_g\rho}^*\cE_{\varphi_{\chi_0,\varepsilon}}(_g\rho)_{0,0}^*)
=\frac{\varepsilon}{2|G|}\sum_{g\in 2G}\inpr{g}{\chi_0}=\frac{\varepsilon}{4},$$
$$S_{\widetilde{\varphi_{\chi_0,\varepsilon}},\widetilde{\nu_-}^i}=\frac{d(\varphi_{\chi_0,\varepsilon})}{\Lambda}
\sum_{g\in G\setminus 2G}d\phi_{_g\rho}(\cE_{\nu_-}^i(0)_{_g\rho,_g\rho}^*\cE_{\varphi_{\chi_0,\varepsilon}}(_g\rho)_{0,0}^*)
=\frac{\varepsilon}{2|G|}\sum_{g\in G\setminus 2G}\inpr{g}{\chi_0}=-\frac{\varepsilon}{4},$$
$$S_{\widetilde{\varphi_{\chi_0,\varepsilon}},\widetilde{\mu}^j}=\frac{d(\varphi_{\chi_0,\varepsilon})}{\Lambda}
\sum_{g\in G}d\phi_{_g\rho}(\cE_\mu^j(0)_{_g\rho,_g\rho}^*\cE_{\varphi_{\chi_0,\varepsilon}}(_g\rho)_{0,0}^*)
=\frac{1}{2|G|}\sum_{g\in G}\varepsilon\inpr{g}{\chi_0}=0.$$

We have already seen in the proof of Lemma \ref{mulfree1} that $S_0^2$ restricted to $\cA_{_g\rho}$ is the identity, 
which implies that every simple object $X\in \cZ(\cC)$ with $\cF(X)$ containing $\alpha_g\circ \rho$ for some $g\in G$ 
is self-dual. 
The only simple objects not satisfying this condition are those in $G_2$, and they are again self-dual. 
\end{proof}

%

In a similar way, we can show the following.
\begin{theorem}\label{4} Let $\cC\subset \End(M)$ be a generalized Haagerup category with a finite group $G$ satisfying Eq.(\ref{Q2}). 
Assume $|G_2|=4$. 
The following set exhausts the equivalence classes of the simple objects in $\cZ(\cC)$:
$$\{k\}_{k\in G_2}\cup\{k\widetilde{\pi}\}_{k\in G_2}\cup\{k\widetilde{\varphi_{\chi,\varepsilon}}\}_{k\in G_2,\;\chi\in (\hG)_2\setminus \{0\},\;\varepsilon\in \{1,-1\}}
 \cup \{\widetilde{\sigma_g}^\tau\}_{(g,\tau)\in (G\times \hG)_*}\cup \{\widetilde{\mu}^i\}_{i\in I}$$
where $I$ is an index set with $|I|=|G|^2/2$. 
Every object in $\cZ(\cC)$ is self-dual. 

Let $k,l\in G_2$, and let $s(k,l)=\epsilon_k(l)\epsilon_l(k)$. 
There exist characters $G_2\ni k\mapsto s(k,\widetilde{\mu}^i)\in \{1,-1\}$ satisfying 
$$S_{k,l}=s(k,l)\frac{a-b}{2},\quad S_{k.l\widetilde{\pi}}=s(k,l)\frac{a+b}{2},
\quad S_{k,l\widetilde{\varphi_{\chi,\pm}}}=s(k,l)\frac{a}{2}\inpr{k}{\chi},$$
$$ S_{k,\widetilde{\sigma_g}^\tau}=a\inpr{k}{\tau}\epsilon_k(g),\quad S_{k,\widetilde{\mu}^i}=s(k,\widetilde{\mu}^i)b,$$
$$S_{k\widetilde{\pi},l\widetilde{\pi}}=s(k,l)\frac{a-b}{2},
\quad S_{k\widetilde{\pi},l\widetilde{\varphi_{\chi,\pm}}}=s(k,l)\frac{a}{2}\inpr{k}{\chi}, 
\quad S_{k\widetilde{\pi},\widetilde{\sigma_g}^\tau}=a\inpr{k}{\tau}\epsilon_k(g),$$
$$S_{k\widetilde{\pi},\widetilde{\mu}^i}=-s(k,\widetilde{\mu}^i)b,
\quad S_{k\widetilde{\varphi_{\chi_1,\varepsilon_1}},l\widetilde{\varphi_{\chi_2,\varepsilon_2}}}
=s(k,l)\inpr{k}{\chi_2}\inpr{l}{\chi_1}(\frac{a}{2}+\frac{\varepsilon_1\varepsilon_2\delta_{\chi_1,\chi_2}}{4}),$$
$$S_{k\widetilde{\varphi_{\chi,\varepsilon}},\widetilde{\sigma_g}^\tau}=a\inpr{k}{\tau}\epsilon_k(g)\inpr{g}{\chi},
\quad S_{k\widetilde{\varphi_{\chi,\pm}},\widetilde{\mu}^{j}}=0,$$
$$S_{\widetilde{\sigma_g}^\tau,\widetilde{\sigma_h}^\theta}=a(\inpr{h}{\tau}\inpr{g}{\theta}+
\overline{\inpr{h}{\tau}\inpr{g}{\theta}}),
\quad S_{\widetilde{\sigma_g}^\tau,\widetilde{\mu}^{j}}=0,$$
$$T_{k,k}=T_{k\widetilde{\pi},k\widetilde{\pi}}=\epsilon_k(k),\quad T_{k\widetilde{\varphi_{\chi,\pm}},k\widetilde{\varphi_{\chi,\pm}}}=\epsilon_k(k)\inpr{k}{\chi},
\quad T_{\widetilde{\sigma_g}^\tau,\widetilde{\sigma_g}^\tau}=\inpr{g}{\tau}.$$
\end{theorem}
%
%
The results of Theorems \ref{2} and \ref{4} are summarized in Figure \ref{stpart}.

\begin{figure}
\resizebox{\textwidth}{!}{%
$
\begin{array}{c !{\vline width 1pt} cccccc !{\vline width 1pt}}
\textbf{S} & & & & & & \\[3pt]
\Xhline{1pt}
l& s(k,l)\frac{a-b}{2} & & & & & \Tstrut \\[3pt]
l\widetilde{\pi}& s(k,l)\frac{a+b}{2} & s(k,l)\frac{a-b}{2}& & & & \\[3pt]
l\widetilde{\varphi_{\chi_2,\varepsilon_2}}  &s(k,1)\frac{a}{2}\inpr{k}{\chi_2} &  s(k,l)\frac{a}{2}\inpr{k}{\chi_2} &  s(k,l)\inpr{k}{\chi_2}\inpr{l}{\chi_1}(\frac{a}{2}+\frac{\varepsilon_1\varepsilon_2\delta_{\chi_1,\chi_2}}{4}) & & & \\[3pt]
 \widetilde{\sigma_h}^\theta & a\inpr{k}{\theta}\epsilon_k(h) & a\inpr{k}{\theta}\epsilon_k(h)& a\inpr{k}{\theta}\epsilon_k(h)\inpr{h}{\chi_1}&a(\inpr{h}{\tau}\inpr{g}{\theta}+
\overline{\inpr{h}{\tau}\inpr{g}{\theta}} & & \\[3pt]
\widetilde{\nu_{\varepsilon_2}}^j& s(k,\widetilde{\nu_{\varepsilon_2}}^j)\frac{b}{2} &
-s(k,\widetilde{\nu_{\varepsilon_2}}^j)\frac{b}{2}  & s(k,\widetilde{\nu_{\varepsilon_2}}^{j})\frac{\varepsilon_1\varepsilon_2}{4}& 0& ? & \\[3pt]
 \widetilde{\mu}^i & s(k,\widetilde{\mu}^i)b&-s(k,\widetilde{\mu}^i)b & 0 & 0&? & ? \\[3pt]
 \Xhline{1pt}
  & k & k\widetilde{\pi} & k\widetilde{\varphi_{\chi_1,\varepsilon_1}} & \widetilde{\sigma_g}^\tau& \widetilde{\nu_{\varepsilon_1}}^{j'} & \widetilde{\mu}^{i'} \Tstrut \\[3pt]
 \Xhline{1pt}
 \textbf{T} &\epsilon_k(k) & \epsilon_k(k) &  \epsilon_k(k)\inpr{k}{\chi_1} &\inpr{g}{\tau} &? & ? \Tstrut \\[3pt]
\end{array}
$
}
\caption{Partial modular data for generalized Haagerup categories for even groups satisfying Eq. (\ref{Q2}). Undetermined entries are labeled by ``?''. (The index set for $\widetilde{\nu_{\varepsilon} }$ is empty when $|G_2|=4$.)}
\label{stpart}
\end{figure}

Now we show how to decide $s(k,\widetilde{\mu}^i)$ and $s(k,\widetilde{\nu_{\pm}}^i)$ for $k\in G_2$. 
For $g\in G$ and $k\in G_2$, we set $$U_g(k)=\epsilon_k(g)(_g\rho\;k|1|k\;_g\rho).$$
Then $\{U_g(k)\}_{k\in G_2}$ forms a representation of $G_2$ in $\cA_{_g\rho}$. 
Let $X=\tilde{\gamma}^i$ be a simple object in $\cZ(\cC)$ such that $\gamma$ contains $_g\rho$ and $_h\rho$.  
Since $\cA_{_g\rho}$ is abelian, $U_g(k)e(X)_{_g\rho,_h\rho}$ is a scalar multiple of $e(X)_{_g\rho,_h\rho}$. 

\begin{lemma} Let the notation be as above. 
$$U_g(k)e(X)_{_g\rho,_h\rho}=s(k,X)e(X)_{_g\rho,_h\rho}.$$
\end{lemma}

\begin{proof} It suffices to show the statement in the case of $h=g$. 
We have already seen $s(k,X)=\epsilon_k(g)\cE_\gamma^i(k)_{_g\rho,_g\rho}^*$, and so 
$$\cE_\gamma^i(k)_{_g\rho,_g\rho}=s(k,X)\epsilon_k(g)\in (\alpha_g\rho\alpha_k,\alpha_k\alpha_g\rho)=\C.$$
Thus 
\begin{align*}
\lefteqn{e(X)_{_g\rho,_g\rho}=\frac{d(X)}{\Lambda d}\sum_{\xi}d(\xi)(_g\rho\;\xi|\cE_\gamma^i(\xi)_{_g\rho,_g\rho}|\xi\;_g\rho)} \\
 &=\frac{d(X)}{\Lambda d}(
 \sum_{k\in G_2}(_g\rho\;k|\cE_\gamma^i(k)_{_g\rho,_g\rho}|k\;_g\rho))+
 \sum_{h\in G}d(_g\rho\;_h\rho|\cE_\gamma^i(_h\rho)_{_g\rho,_g\rho}|_h\rho\;_g\rho))\\ 
 &=\frac{d(X)}{\Lambda d}(
 \sum_{k\in G_2}s(k,X)U_g(k)+
 \sum_{h\in G}d(_g\rho\;_h\rho|\cE_\gamma^i(_h\rho)_{_g\rho,_g\rho}|_h\rho\;_g\rho)),
\end{align*}
which shows the statement. 
\end{proof}

From the definition of $U_g(k)$, we can see that the representation $\{U_g(k)\}_{k\in G_2}$ of $G_2$ in 
$\cA_{_g\rho,_g\rho}$ is a multiple of the regular representation, that is, each character of $G_2$ occurs with 
the same multiplicity. 
For the same reason, we see that the left multiplication of $U_g(k)$ on $\cA_{_g\rho,_h\rho}$ gives rise to a representation 
of $G_2$, which is a multiple of the regular representation. 
On the other hand, the above formula tells the multiplicity of each character in $Z_G\cA_{_g\rho}$ and 
$Z_G\cA_{_g\rho,_h\rho}$. 
Thus we know the multiplicity of each character in $(1-Z_G)\cA_{_g\rho}$ and $(1-Z_G)\cA_{_g\rho,_h\rho}$. 

Recall that we can canonically decompose $G$ as $G_e\times G_o$ where $G_e$ is a 2-group and $G_o$ is an odd group. 

\begin{cor} Assume $|G_2|=2$, and let $a_0$ be the unique non-trivial element of $G_2$. 
Let $I$ be the index set of the half-braidings of $\mu$, and let 
$$I_\pm=\{i\in I;\; s(a_0,\widetilde{\mu}^i)=\pm1\}.$$
\begin{itemize}
\item [(1)] Assume that $G_e$ is $\Z_2$. 
Then $|I_+|=|G|^2/4-1$, $|I_-|=|G|^2/4+1$, and $s(a_0,\widetilde{\nu_{\pm}}^j)=1$. 
\item [(2)] Assume that $G_e$ is not $\Z_2$. 
Then $|I_\pm|=|G|^2/4$ and $s(a_0,\widetilde{\nu_\pm}^j)=-1$. 
\end{itemize}
\end{cor}

\section{Computing the $(\nu/\mu) \times (\nu/\mu) $ corner of the modular data} \label{corner}
\subsection{Outline of the method} \label{corner1}
The table in Figure \ref{stpart} gives formulas for the modular data of the center of a generalized Haagerup category for an even group satisfying Eq.(\ref{Q2}), except for the corners of $S$ and $T$ indexed by $\widetilde{\nu_{\varepsilon}}^j$ (for $|G_2|
=2$) and $\widetilde{\mu}^i $. Since $\nu $ and $\mu $ do not have any invertible simple summands, we are unable to prove a general formula for the modular data in terms of the characters $ \epsilon$.  However, we can compute the missing corner directly from the tube algebra using the full data of the category $(\epsilon, A)$ for some small examples (and we will also formulate conjectures for the general case).  

To perform the computation, we need to find the minimal projections $e(\widetilde{\nu_{\varepsilon}}^j)_{{}_g\rho,{}_g \rho} $  and  $e(\widetilde{\mu}^i )_{{}_g\rho,{}_g \rho} $ in $\mathcal{A}_{{}_g \rho} $ for each $g$. 
The projections $e(\cdot)_{{}_g\rho,{}_g \rho }$ for different $g$ can then be added together to find the minimal central projections  $z(\widetilde{\nu_{\varepsilon}}^j)$  
and  $z(\widetilde{\mu}^i )$. Then the modular data can be computed using Eq.(\ref{Sformula0}) or Eq.(\ref{Sformula}).

First, we find formulas for the minimal central projections $z(\cdot) $ corresponding to each of $k\widetilde{\pi} $, $k\widetilde{\varphi_{\chi,\pm}} $, and $\widetilde{\sigma_{g, \tau}} $ as follows. Let $$\mathcal{A}_G=\bigoplus_{g,h \in G}\limits\mathcal{A}_{g,h},$$ and similarly for $\mathcal{A}_{{}_G\rho} $, etc.  For each minimal central projection $p$ in $\mathcal{A}_G $ and each $h \in G$ such that $p\mathcal{A}_{G,{}_h \rho} $ is nontrivial, we choose a minimal subprojection $p'$ of $p$ and a partial isometry $j(p',h) $ in $\mathcal{A}_{G,{}_h \rho} $ such that $j(p',h)j^*(p',h)=p' $; then $j(p',h)j^*(p',h) $ is the corresponding minimal projection in $\mathcal{A}_{{}_h \rho} $.
In this way we can find all of the minimal central projections $z(\cdot)$ in the tube algebra such that $1_Gz(\cdot)\neq 0 $. Summing the $z(\cdot) $ for all of the $k$, $k\widetilde{\pi} $, $k\widetilde{\varphi_{\chi,\pm}} $, and $\widetilde{\sigma_{g, \tau}} $, we obtain an expression for $Z_G$.

Next, we diagonalize the action of $\textbf{t}_{{}_g\rho}$ on $\mathcal{A}_{{}_g \rho} $ for a representative $g$ in each coset of $2G$ (since $1_{{}_g \rho} $ is equivalent to $1_{{}_{g+2h} \rho} $). We use Mathematica to perform the calculations. It turns out that even for relatively small examples, it is difficult to calculate the eigenvalues directly, since the arithmetic takes place in a complicated number field. So instead we first find the eigenvalues numerically and guess their exact values. Then we use this guess to construct the minimal polynomial $q$ of $\textbf{t}_{{}_g\rho}$. We then verify that the guess is correct by showing through exact calculation that $ q(\textbf{t}_{{}_g\rho})$ vanishes, and that no proper factor of $q $ vanishes on $\textbf{t}_{{}_g\rho}$. 

Once we have the eigenvalues of $\textbf{t}_{{}_g\rho}$, we can compute the projections onto the eigenspaces. For each eigenvalue $\lambda $, set
$$p_g^\lambda=\displaystyle \frac {q_\lambda(\textbf{t}_{{}_g\rho}) }{q_\lambda(\lambda)}, \quad \text{ where } q_{\lambda} \text{ is the polynomial } q_{\lambda}(z)=\displaystyle \frac{q (z)}{z-\lambda}.$$
For each eigenvalue $\lambda $, let $(p_g^\lambda)'=(1-Z_G)p_g^\lambda $.

If the number of nonzero $(p_g^\lambda)' $ in $\mathcal{A}_{{}_g \rho} $ is equal to the dimension of $(1-Z_G)\mathcal{A}_{g \rho}$, then the $(p_g^\lambda)'$ must be exactly the $e(\widetilde{\nu_{\varepsilon}}^j)_{{}_g\rho,{}_g \rho} $ and  $e(\widetilde{\mu}^i)_{{}_g\rho,{}_g \rho} $ that we are looking for.

If the number of nonzero $(p_g^\lambda)' $ in $\mathcal{A}_{{}_g \rho} $ is less than the dimension of $(1-Z_G)\mathcal{A}_{g \rho}$, then  some of the  $(p_g^\lambda )'$ have rank greater than $1$ and we need to split them up. In the small examples that we consider, the highest rank that comes up for $(p_g^\lambda)'$ is $2$. We can then split up a given projection $(p_g^\lambda)' $ by finding an element $x$ in $\mathcal{A}_{{}_g \rho} $ such that $x (p_g^\lambda)' $ is not a scalar multiple of  $(p_g^\lambda)' $. Then we can write down a linear relation among $(p_g^\lambda)' $, $x (p_g^\lambda)' $,  and $x^2 (p_g^\lambda)' $ and find the minimal subprojections of $(p_g^\lambda)' $.

%
%
%
%
%
%
%

Once we have found all of the $e(\cdot)_{{}_g \rho,{}_g \rho} $, we need to match them up and add them together and find the $z(\cdot) $. Matching minimal projections of $\mathcal{A}_{{}_g\rho } $ and $\mathcal{A}_{{}_h\rho } $ which share a common $T$-eigenvalue can be done by looking at the action of  $\mathcal{A}_{{}_g\rho,  {}_h\rho}$. However, in the small examples we consider, there is only one matching that gives consistent modular data.

%
%
%

\begin{remark}
\begin{enumerate}
\item The first step, finding an expression for $Z_G $, is not strictly necessary if one is only interested in finding the missing corner of modular data. If we skip this step, there may be a bit more work when we diagonalize the action of $\textbf{t}_{{}_g\rho}$ in figuring out which eigenvalues correspond to  the $\widetilde{\nu_{\varepsilon}}^j$ and the $\widetilde{\mu}^i $, and in splitting up the corresponding eigenprojections if some of the $\widetilde{\nu_{\varepsilon}}^j$ or  $\widetilde{\mu}^i $ share eigenvalues with subprojections of $Z_G $.
\item To find the partial isometries $j(p',h) $ in $\mathcal{A}_{g,{}_h \rho} $, we can simply multiply $p'$ by elements of a basis for $\mathcal{A}_{g,{}_h \rho} $ until we find something nonzero. Since $p'$ is minimal and $\mathcal{A}_{{}_h \rho} $ is Abelian, any nonzero element in $p'\mathcal{A}_{g,{}_h \rho} $ can be rescaled to a partial isometry. 
\item If we use Eq. (\ref{Sformula0}) to compute the $S$-matrix, we can take advantage of the fact that the expression
$$ \psi(S_0 (\cdot) , \cdot) $$ 
is bilinear to simplify the calculation in the case that the  $\textbf{t}$-eigenvalues for the $\widetilde{\mu}^i $ are multiplicity free. In this case, we can write each $\widetilde{\mu}^i $ as a linear combination of powers of $\textbf{t}$. Then we can find the corresponding values of the $S$-matrix by first 
computing $$ \psi(S_0( \textbf{t}^m) , \textbf{t}^n) $$   
for various $m$ and $n$,
and then taking an appropriate linear combination of those values. The advantage is that the tube algebra calculations now take place with simpler numbers, and the more complicated coefficients are only introduced at the last step.
\end{enumerate}
\end{remark}

\subsection{Examples with $|G_2|$=2}

\begin{example}

For $G=\Z_2$ there is a unique generalized Haagerup category $\mathcal{C} $, which is the even part of the $A_7$ subfactor (or the quantum group category $PSU(2) $ at level $6$). The structure constants $(\epsilon,A)$ for this category are given in \cite[Section 9.1]{MR3827808}.

Then $G=G_2$, $(G \times \hat{G})_* $ is empty, $|I|=|I_+|=2 $, and $\mathcal{Z}(\mathcal{C}) $ has rank $14$.
The $T$-eigenvalues for the six objects $(\widetilde{\nu_{+}} ^0, \widetilde{\nu_{+}}^1,\widetilde{\nu_{-}}^0,\widetilde{\nu_-}^{1} ,\widetilde{ \mu}^0,\widetilde{ \mu}^1 ) $ are given by $$(i,-i,i,-i,\dfrac{1+i}{\sqrt{2}},\dfrac{1-i}{\sqrt{2}} ) ,$$ and the corresponding block of the $S$-matrix is
$$\dfrac{1}{8}\left(
\begin{array}{cccccc}
 -2+\sqrt{2} & 2+\sqrt{2} & 2+\sqrt{2} & -2+\sqrt{2} & -2 \sqrt{2} & 2 \sqrt{2} \\
 2+\sqrt{2} & -2+\sqrt{2} & -2+\sqrt{2} & 2+\sqrt{2} & 2 \sqrt{2} & -2 \sqrt{2} \\
 2+\sqrt{2} & -2+\sqrt{2} & -2+\sqrt{2} & 2+\sqrt{2} & -2 \sqrt{2} & 2 \sqrt{2} \\
 -2+\sqrt{2} & 2+\sqrt{2} & 2+\sqrt{2} & -2+\sqrt{2} & 2 \sqrt{2} & -2 \sqrt{2} \\
 -2 \sqrt{2} & 2 \sqrt{2} & -2 \sqrt{2} & 2 \sqrt{2} & 0 & 0 \\
 2 \sqrt{2} & -2 \sqrt{2} & 2 \sqrt{2} & -2 \sqrt{2} & 0 & 0 \\
\end{array}
\right) .$$

Let $a_0$ be the non-trivial element of $G$, which we also use to label the corresponding invertible object in  $\mathcal{Z}(\mathcal{C}) $. Since $\epsilon_{a_0}(a_0)=-1 $, $a_0 $ is a fermion (which means its twist is $ -1$), and $(\mathcal{Z}(\mathcal{C}) ,k_0)$ is a spin-modular category in the sense of \cite{MR2163566}. 

The trivial component $\mathcal{Z}(\mathcal{C})_0 $ with respect to the $ \Z_2$-grading associated to $a_0$ is the supermodular tensor subcategory generated by the objects $k$, $k\widetilde{\pi}$, and and $\widetilde{\nu_\epsilon}^j $. Then $\mathcal{Z}(\mathcal{C}) $ is a modular closure of $\mathcal{Z}(\mathcal{C})_0 $, and by \cite[Theorem 5.4]{MR3613518} there are exactly $16$ different modular closures of $\mathcal{Z}(\mathcal{C})_0 $. The modular data for $8$ of these can be computed by the zesting formula in \cite[Theorem 3.15]{MR3641612}.
\end{example}

\begin{example}
For $G=\Z_4$, there is a unique generalized Haagerup category satisfying Eq. (\ref{Q2}). The structure constants $(\epsilon,A) $ for this category are given in \cite[Section 9.3]{MR3827808}. In this case $|I_+|=|I_-|=4 $. We index the $\widetilde{\mu}^i $ by $\{+,- \} \times \{ 1,2 \}  \times \{ -1,1\}$ (with the sign corresponding to $I_{\pm} $). 

We can compute the missing corner of the modular data following the outline in Section \ref{corner1}. Some of the $\textbf{t}$-eigenvalues for the $\widetilde{\mu}^i $ have multiplicity, so it is necessary to split the corresponding eigenprojections by brute force as explained above.

 Then the $12 \times 12$ block corresponding to the $$(\widetilde{\nu_+},\widetilde{\nu_-},\widetilde{\mu_+},\widetilde{\mu_-} )$$ is as follows. The eigenvalues of the $T$-matrix are $$(i,-i,i,-i, \zeta_5^2, \zeta_5^2 ,\zeta_5^{-2} ,\zeta_5^{-2} ,i \zeta_5^2, -i\zeta_5^2 ,i\zeta_5^{-2} ,-i\zeta_5^{-2} ) ,$$ where $\zeta_r^n=e^{\frac{2\pi i n }{r}} $, and the corresponding block of the $S$-matrix is

$$\left(
\begin{array}{cccccccccccc}
 c_3 & c_2 & c_1 & c_4 & -1 & 1 & -1 & 1 & 1 & -1 & 1 & -1 \\
 c_2 & c_3 & c_4 & c_1 & -1 & 1 & -1 & 1 & -1 & 1 & -1 & 1 \\
 c_1 & c_4 & c_3 & c_2 & -1 & 1 & -1 & 1 & 1 & -1 & 1 & -1 \\
 c_4 & c_1 & c_2 & c_3 & -1 & 1 & -1 & 1 & -1 & 1 & -1 & 1 \\
 -1 & -1 & -1 & -1 & c_3 & c_3 & c_1 & c_1 & c_3 & c_3 & c_1 & c_1 \\
 1 & 1 & 1 & 1 & c_3 & c_3 & c_1 & c_1 & c_2 & c_2 & c_4 & c_4 \\
 -1 & -1 & -1 & -1 & c_1 & c_1 & c_3 & c_3 & c_1 & c_1 & c_3 & c_3 \\
 1 & 1 & 1 & 1 & c_1 & c_1 & c_3 & c_3 & c_4 & c_4 & c_2 & c_2 \\
 1 & -1 & 1 & -1 & c_3 & c_2 & c_1 & c_4 & c_2 & c_3 & c_4 & c_1 \\
 -1 & 1 & -1 & 1 & c_3 & c_2 & c_1 & c_4 & c_3 & c_2 & c_1 & c_4 \\
 1 & -1 & 1 & -1 & c_1 & c_4 & c_3 & c_2 & c_4 & c_1 & c_2 & c_3 \\
 -1 & 1 & -1 & 1 & c_1 & c_4 & c_3 & c_2 & c_1 & c_4 & c_3 & c_2 \\
\end{array}
\right)$$

where $c_k=2 \cos \frac{k \pi}{5} \in \frac{1}{2}\{ \pm 1 \pm \sqrt{5}  \}$.

This information can be summarized by the table in Figure \ref{stpart2}.

\begin{figure}
$
\begin{array}{c !{\vline width 1pt} ccc!{\vline width 1pt}}
\sqrt{n^2+4} \cdot \textbf{S} & & &  \\[3pt]
\Xhline{1pt}
\widetilde{\nu_{\varepsilon'_1}}^{\varepsilon'_2}& \frac{\varepsilon_2 \varepsilon_2' }{2}-\frac{\varepsilon_1\varepsilon_2\varepsilon_1'\varepsilon_2' }{4}\sqrt{n^2+4}& & \Tstrut \\[3pt]
\widetilde{\mu_{+}}^{l',\varepsilon'}&-\varepsilon'  & -2 \cos \frac{4 \pi a l l'}{r} &  \\[3pt]
\widetilde{\mu_{-}}^{l',\varepsilon'}  & \varepsilon'_2 \varepsilon &  -\varepsilon 2 \cos \frac{4 \pi a l l'}{r}   & \varepsilon \varepsilon' 2 \cos \frac{4 \pi a l l'}{r}    \\[3pt]
 
 \Xhline{1pt}
  & \widetilde{\nu_{\varepsilon_1}}^{\varepsilon_2} & \widetilde{\mu_{+}}^{l,\varepsilon} & \widetilde{\mu_{-}}^{l,\varepsilon} \\[3pt]
 \Xhline{1pt}
 \textbf{T} &\varepsilon_2  i& \zeta_r^{al^2} 
  & \varepsilon i \zeta_r^{al^2}\ \Tstrut \\[3pt]
\end{array}
$
\caption{The missing corner of modular data for the generalized Haagerup category for $\Z_4 $. Here $n=|G|=4 $, $r=(n^2+4)/4=5 $, $l$ ranges from $1$ to $(r-1)/2=2 $, and $a=2$ satisfies $(\frac{a}{r})=-1 $, where $(\frac{a}{r})$ is the Jacobi symbol.}
\label{stpart2}
\end{figure}
\end{example}
\begin{example}
For $G=\Z_8$, there is a generalized Haagerup category satisfying Eq. (\ref{Q2}), whose data $(\epsilon,A) $ is given in the accompanying Mathematica notebook \texttt{solutions.nb}. It is too difficult to compute the modular data exactly, but we have checked numerically that the modular data appears to conform to the table in Figure \ref{stpart2} as well (for $n=8$). 
\end{example}

\begin{example}
For each of $G=\Z_6$ and $G=\Z_{10}$, there are exactly two generalized Haagerup categories which satisfy Eq. (\ref{Q2}). The data $(\epsilon, A) $ for these categories is given in the accompanying Mathematica notebook \texttt{solutions.nb}. We did not compute the modular data exactly, but numerical calculations led to a conjecture summarized by the table in Figure \ref{stpart3}. In each case, the two generalized Haagerup categories for $G$ correspond to the two different possible values of the Jacobi symbol $(\frac{a}{r}) $. 

For these examples, we also have a fermion in $\cZ(\cC) $, so in each case there are $16$ different modular closures of the supermodular subcategory, as above.

\begin{figure}
\resizebox{\textwidth}{!}{%
$
\begin{array}{c !{\vline width 1pt} ccc!{\vline width 1pt}}
\sqrt{n^2+4} \cdot \textbf{S} & & &  \\[3pt]
\Xhline{1pt}
\widetilde{\nu_{\varepsilon'_1}}^{\varepsilon'_2}& \frac{\varepsilon_2 \varepsilon_2' }{2}-\frac{\varepsilon_1\varepsilon_2\varepsilon_1'\varepsilon_2' }{4}\sqrt{n^2+4}& & \Tstrut \\[3pt]
\widetilde{\mu_{+}}^{l',s'}&(-1)^{s'+1}  & (-1)^{ss'+1}2 \cos \frac{4 \pi a l l'}{r} &  \\[3pt]
\widetilde{\mu_{-}}^{m',\varepsilon'}  & -(\frac{a}{r})\varepsilon' \varepsilon_2 &  f(s)\varepsilon^s 2 \cos \frac{4 \pi a m' l}{r}   & -(\frac{a}{r})(-1)^{\frac{(1-\varepsilon)(1-\varepsilon')}{4}} 2 \sin \frac{4 \pi a m m'}{r}    \\[3pt]
 
 \Xhline{1pt}
  & \widetilde{\nu_{\varepsilon_1}}^{\varepsilon_2} & \widetilde{\mu_{+}}^{l,s} & \widetilde{\mu_{-}}^{m,\varepsilon} \\[3pt]
 \Xhline{1pt}
 \textbf{T} &\varepsilon_2  i&  \zeta^s_4\zeta^{al^2}_r  &  \zeta^{(\frac{a}{m}+2)\varepsilon}_8\zeta^{am^2}_r    \Tstrut \\[3pt]
\end{array}
$
}
\caption{Conjecture for the missing corner of modular data for the generalized Haagerup category for $\Z_{4m+2}$. Here $n=|G| $; $r=(n^2+4)/8$; $1 \leq l \leq (r-1)/2 $; $0 \leq s \leq 3$; $0 \leq m \leq r-1$; $f(0)=-1 $, $f(1)=-(\frac{a}{r}) $, $f(2)=1 $, and $f(3)=(\frac{a}{r})$.}
\label{stpart3}
\end{figure}

\end{example}
\section{Tensor product factorization} \label{factor}

In this section we consider a generalized Haagerup category satisfying Eq.(\ref{Q2}) such that $G_2=\Z_2 \times \Z_2 $. In this case, the $ \text{Vec}_{\Z_2 \times \Z_2 }$ subcategory lifts to the center by Lemma \ref{lifts}, and if the braiding on this subcategory is non-degenerate, we can apply M\"uger factorization.
\subsection{M\"uger factorization}
We fix a generalized Haagerup category $\cC\subset \End(M)$ with a finite abelian group $G$ satisfying Eq.(\ref{Q2}). 
We assume that $|G_2|=4$ (i.e. $G_2\cong \Z_2\times \Z_2)$, and the symmetric bicharacter $s(k,l)=\epsilon_k(l)\epsilon_l(k)$ on $G_2\times G_2$ 
is non-degenerate. 
Then the subcategory of $\cZ(\cC)$ generated by $G_2$, which we still denote by $G_2$ for simplicity, 
is a modular tensor category, whose modular data are 
$S^{G_2}_{k,l}=\frac{s(l,k)}{2}$, $T^{G_2}_{k,k}=\epsilon_k(k)$. 
Let 
$$\cZ(\cC)\cap G_2'=\{X\in \cZ(\cC);\;\forall k\in G_2, \cE(k,X)=\cE(X,k)^{-1}\},$$
where $\cE$ is the braiding of $\cZ(\cC)$. 
Then M\"uger's factorization theorem \cite[Theorem 4.2]{MR1990929}) says that this is a modular tensor category with 
$$\cZ(\cC)\cong G_2\boxtimes (\cZ(\cC)\cap G_2').$$

We denote by $(S',T')$ the modular data of $\cZ(\cC)\cap G_2'$. 
Then we have the tensor product factorization $S=S^{G_2}\otimes S'$, $T=T^{G_2}\otimes T'$. 

A simple object $X\in \cZ(\cC)$ belongs to $\Irr(\cZ(\cC)\cap G_2')$ if and only if $s(k,X)=1$ for any $k\in G_2$. 
For $X,Y\in \Irr(\cZ(\cC)\cap G_2')$, we have $S'_{X,Y}=2S_{X,Y}$, $T'_{X,X}=T_{X,X}$. 
 
Let 
$$J_1=\{(k,\chi)\in G_2\times ((\hG)_2\setminus\{0\}\});\;\forall l\in G_2, s(l,k)\inpr{l}{\chi}=1\},$$
$$J_2=\{(g,\tau)\in (G\times \hG)_*;\; \forall k\in G_2, \inpr{k}{\tau}\epsilon_k(g)=1\},$$
$$I_0=\{i\in I;\;\forall k\in G_2, s(k,\widetilde{\mu}^i)=1\}.$$
Then Theorem \ref{4} implies 
$$\Irr(\cZ(\cC)\cap G_2')=\{0,\widetilde{\pi}\}\cup \{k\widetilde{\varphi_{\chi,\varepsilon}}\}_{(k,\chi)\in J_1,\varepsilon\in \{1,-1\}}
\cup\{\widetilde{\sigma_g}^\tau\}_{(g,\tau)\in J_2}\cup \{\widetilde{\mu}^i\}_{i\in I_0}.$$
Since $|I|=4|I_0|$, we have $|I_0|=|G|^2/8$. 

The modular data $(S,T)$ are determined by $(S',T')$, and the latter have been already decided except for the
$I_0\times I_0$ entries. 
In concrete examples, we can often compute them by diagonalizing the multiplication operators of $U_g(k)$ and $\mathbf{t}_{_g\rho}$ on $\cA_{_g\rho}$. 
\subsection{$G_e=\Z_2\times \Z_2$}

We assume that $G=\Z_2\times \Z_2\times G_o$ with odd $G_o$, and that the symmetric bicharacter $s(k,l)$ on 
$G_2\times G_2$ is non-degenerate. 
In this case, we can easily identify the index sets $J_1$ and $J_2$ as follows. 

We identify $\hG$ with $(\hG)_2\times \widehat{G_o}$. 
Then for each $l\in G_2$, there exists a unique $\chi_l\in (\hG)_2$ satisfying $s(k,l)=\inpr{k}{\chi_l}$ 
for any $k\in G_2$. 
We set $\Phi_{k,\varepsilon}=k\widetilde{\varphi_{\chi_k,\varepsilon}}$ for $k\in G_2\setminus \{0\}$. 

For the sets $G_*$ and $\hG_*$, we may assume that there exist subsets $G_{o*}\subset G_o$ and 
$\hG_{o*}\subset \hG_o$ satisfying 
$G_*=(G_2\times G_{o*})$ and $\hG_*=(\hG_2\times \hG_{o*})$. 
Let $\tG=G_2\times G_o\times \hG_o$, and let 
$$\tG_*=(G_2\times \{0\}\times \hG_{0*})\sqcup (G_2\times G_{o*}\times \hG_o)\subset G_2\times G_o\times \hG_o,$$
which satisfies
$$\tG=\tG_2\sqcup \tG_*\sqcup -\tG_*. $$
We identify $G_2$ with $\tG_2$. 

For $(k,h,\tau)\in \tG_*$, we set $\Sigma_{k,h,\tau}=\widetilde{\sigma_{(k,h)}}^{(\chi_k+\epsilon_k,\tau)}$.  
We set $a'=2a=1/2|G_o|=1/\sqrt{|\tG|}$, $b'=2b=1/\sqrt{4|G_o|^2+1}$. 

\begin{theorem} Let the notation be as above. 
The set 
$$\{0,\tpi\}\cup\{\Phi_{k,\varepsilon}\}_{k\in \tG_2\setminus\{0\},\varepsilon\in \{1,-1\}}\cup
\{\Sigma_{k,h,\tau}\}_{(k,h,\tau)\in \tG_*}\cup\{\tilde{\mu}^i\}_{i\in I_0}$$
exhausts all simple objects in $\cZ(\cC)\cap G_2'$. 
Except for $\tmu^i$-$\tmu^{i'}$ entries, the modular data of $\cZ(\cC)\cap G_2'$ are given by
$$S'_{0,0}=S^{\widetilde{\cC}_1}_{\tpi,\tpi}=\frac{a'-b'}{2},\quad S'_{0,\tpi}=\frac{a'+b'}{2},$$
$$S'_{0,\Phi_{k,\varepsilon}}=S'_{\tpi,\Phi_{k,\varepsilon}}=\frac{a'}{2},\quad 
S'_{0,\Sigma_{k,h,\tau}}=S'_{\tpi,\Sigma_{k,h,\tau}}=a',$$ 
$$S'_{0,\tmu^j}=b',\quad S'_{\tpi,\tmu^j}=-b',$$
$$S'_{\Phi_{k,\varepsilon},\Phi_{k',\varepsilon'}}
=s(k,k')\frac{a'+\varepsilon\varepsilon'\delta_{k,k'}}{2},\quad
S'_{\Phi_{k,\varepsilon},\Sigma_{l,h,\tau}}
=s(k,l)a',\quad 
S'_{\Phi_{k,\varepsilon},\tmu^j}=0,$$
$$S'_{\Sigma_{k,h,\tau},\Sigma_{k',h',\tau'}}=s(k,k')(\inpr{h}{\tau'}\inpr{h'}{\tau}+\overline{\inpr{h}{\tau'}\inpr{h'}{\tau}})a',\quad 
S'_{\Sigma_{k,h,\tau},\tmu^j}=0,$$
$$T'_{0,0}=T'_{\tpi,\tpi}=1,\quad 
T'_{\Phi_{k,\varepsilon},\Phi_{k,\varepsilon}}=\epsilon_k(k),\quad 
T'_{\Sigma_{k,h,\tau},\Sigma_{k,h,\tau}}=\epsilon_k(k)\inpr{h}{\tau}.$$
\end{theorem}

\begin{figure}
\resizebox{\textwidth}{!}{%
$
\begin{array}{c !{\vline width 1pt} ccccc!{\vline width 1pt}}
\textbf{S} & & & & &  \\[3pt]
\Xhline{1pt}
0 & \frac{a'-b'}{2} & & & & \\[3pt]
 \widetilde{\pi}  & \frac{a'+b'}{2} &  \frac{a'-b'}{2}& & &  \\[3pt]
  \Phi_{k', \varepsilon'} & \frac{a'}{2} & \frac{a'}{2}   & s(k,k')\frac{a'+\varepsilon \varepsilon' \delta_{k,k'}}{2} & &  \\[3pt]
  \Sigma_{l',h',\tau'} &a' & a' & s(k,l')a' & s(l,l') (\langle h,\tau'  \rangle \langle h,\tau'  \rangle+\overline{\langle h,\tau'  \rangle \langle h,\tau'  \rangle}) a'&  \\[3pt]
   \widetilde{\mu}^{i'} & b' & -b' & 0 & 0 & ?  \\[3pt]
 \Xhline{1pt}
  & 0 &  \widetilde{\pi}  & \Phi_{k, \varepsilon} & \Sigma_{l,h,\tau} & \widetilde{\mu}^i \\[3pt]
 \Xhline{1pt}
 \textbf{T} & 1 & 1 & \epsilon_{k}(k) & \epsilon_l(l) (\langle h,\tau  \rangle & ?  \Tstrut \\[3pt]
\end{array}
$
}
\caption{Modular data for the commutant of $G_2$ for $G_e=G_2=\Z_2  \times \Z_2  $, with the entries labeled by ``?'' undetermined. }
\label{stpart4}
\end{figure}

\begin{example} Let $G= \Z_2 \times \Z_2 $. There is a unique generalized Haagerup category for $G$. The structure constants $(\epsilon, A) $ are given in \cite[Section 9.4]{MR3827808}.  Looking at $\epsilon $, we find that
$$S^{\Z_2\times \Z_2}=\frac{1}{2}\left(
\begin{array}{cccc}
1 &1 &1 &1  \\
1 &1 &-1 &-1  \\
1 &-1 &1 &-1  \\
1 &-1 &-1 &1 
\end{array}
\right)$$
and
$$T^{\Z_2\times \Z_2}=\mathrm{Diag}(1,-1,-1,-1).$$
Since we have $({S^{\Z_2\times \Z_2}})^2=I$ and $(S^{\Z_2\times \Z_2}T^{\Z_2\times \Z_2})^3=-I$, 
the modular group relation for $S'$ and $T'$ take the form ${S'}^2=I$, $(S'T')^3=-I$. 

We can compute the $I_0 \times I_0 $ entries from the tube algebra, following the outline in Section \ref{corner1}. We have $|I_0|=2 $. The two eigenvalues for $\widetilde{\mu}^i $ are $\zeta_5^{\pm 1}$, and the corresponding block of the $S$-matrix is 
$$ 
\frac{1}{10} \left(
\begin{array}{cc}
5+\sqrt{5} & -5+\sqrt{5} \\
-5+\sqrt{5}  & 5+\sqrt{5}  \\
\end{array}
\right).
$$ 

It was pointed out to us by Marcel Bischoff that this example is related to a simple current extension of $SU(5)_5 $ - see \cite[Section 3.3]{MR3764563}.

\end{example}

\subsection{$G=\Z_{2m}\times \Z_2$ with $\epsilon_{(km,l)}(i,j)=(-1)^{kj}$.} 
We assume that $G=\Z_{2m}\times \Z_2$ with $m\geq 2$ and $\epsilon_{(km,l)}(i,j)=(-1)^{kj}$. 
In this case, we have 
$$G_2=\{(0,0),(m,0),(0,1),(m,1)\},$$
$$s((mi,j),(mi',j'))=\epsilon_{(mi,j)}((mi',j'))\epsilon_{(mi',j')}((mi,j))=(-1)^{ij'+i'j},$$
which is non-degenerate on $G_2\times G_2$, and $T_{(mi,j),(mi,j)}=(-1)^{ij}$. 
Thus the modular data $(S^{G_2},T^{G_2})$ are those of $\cZ(\mathrm{Vec}_{\Z_2})$. 
We will show below that $m$ must be even if such a generalized Haagerup category $\cC$ exists. 

We can identify the two index sets $J_1$ and $J_2$ as follows. 
We identify $\widehat{\Z_n}$ with $\Z_n$ via $\inpr{j}{k}=\zeta_n^{jk}$ where 
$\zeta_n=e^{2\pi i/n}$. 
Let $\tG=\Z_{2m}^2$, and let 
$$\tG_*=\{(i,j)\in \Z_{2m}^2;\;i\in \{0,m\}, 0<j<m\}
\cup\{(i,j)\in \Z_{2m}^2;\;0<i<m\},$$
which satisfies 
$$\tG=\tG_2\sqcup \tG_*\sqcup -\tG_*.$$

For $(i,j)\in \Z_2^2\setminus\{0\}$, we set 
$\Phi_{(i,j),\varepsilon}=(mj,mi)\widetilde{\varphi_{(mi,j),\varepsilon}}$. 
For $(i,j)\in \tG_*$, we set 
$\Sigma_{(i,j)}=\widetilde{\sigma}_{(i,j)}^{(j,0)}$. 
Let $a'=2a=\frac{1}{2m}=\frac{1}{\sqrt{|\tG|}}$, $b'=2b=\frac{1}{\sqrt{4m^2+1}}$. 

\begin{theorem}\label{even} With the above assumption the natural number $m$ is always even. 
The following set exhausts the simple objects of $\cZ(\widetilde{\cC}\cap G_2')$: 
$$\{0,\;\tpi\}\cup\{\Phi_{(i,j,\varepsilon)}\}_{(i,j)\in \Z_2^2\setminus \{0\},\varepsilon\in \{1,-1\}}\cup 
\{\Sigma_{(i,j)}\}_{(i,j)\in \tG_*}\cup \{\tmu^i\}_{i\in I_0},$$
and they are all self-conjugate.  
Except for $\tmu^i$-$\tmu^{i'}$ entries, the modular data are given as 
$$S'_{0,0}=S'_{\tpi,\tpi}=\frac{a'-b'}{2},\quad S'_{0,\tpi}=\frac{a'+b'}{2},
\quad S'_{0,\Phi_{(i,j),\varepsilon}}=S'_{\tpi,\Phi_{(i,j),\varepsilon}}=\frac{a'}{2},$$
$$S'_{0,\Sigma_{(i,j)}}=S'_{\tpi,\Sigma_{(i,j)}}=a',\quad 
S'_{0,\tmu^i}=b',\quad 
S'_{\tpi,\tmu^j}=-b',$$
$$S'_{\Phi_{(i,j),\varepsilon},\Phi_{(i',j'),\varepsilon'}}=\frac{a'+\delta_{i,i'}\delta_{j,j'}\varepsilon\varepsilon'}{2},
\quad S'_{\Phi_{(i,j),\varepsilon},\Sigma_{(i',j')}}=(-1)^{ii'+jj'}a',
\quad S'_{\Phi_{(i,j),\varepsilon},\tmu^k}=0,$$

$$S'_{\Sigma_{(i,j)},\Sigma_{(i',j')}}=2a'\cos \frac{(ij'+i'j)\pi}{m},\quad 
S'_{\Sigma_{(i,j)},\tmu^k}=0,$$
$$T'_{0,0}=T'_{\tpi,\tpi}=T'_{\Phi_{(i,j),\varepsilon},\Phi_{(i,j),\varepsilon}}=1,\quad 
T'_{\Sigma_{(i,j)},\Sigma_{(i,j)}}=\zeta_{2m}^{ij}.$$
\end{theorem}

\begin{proof} The above formulae for the modular data follow from Theorem \ref{4} except that it instead gives 
$$S'_{\Phi_{(i,j),\varepsilon},\Phi_{(i',j'),\varepsilon'}}=(-1)^{m(ij'+i'j)}\frac{a'+\delta_{i,i'}\delta_{j,j'}\varepsilon\varepsilon'}{2},$$
$$T'_{\Phi_{(i,j),\varepsilon},\Phi_{(i,j),\varepsilon}}=(-1)^{mij}.$$

Note that we have $(ST)^3=I$ because every object in $\cZ(\cC)$ is self-conjugate thank to Theorem \ref{4}. 
Since $(S^{G_2}T^{G_2})^3=I$, we get $(S'T')^3=I$. 
To show that $m$ is even, we compute the $\Phi_{(i,j),\varepsilon}$-$\Phi_{(i',j'),\varepsilon'}$ entries of 
$$S'T'S'=(T'S'T')^{-1}=\overline{T'S'T'}.$$
We have 
\begin{align*}
\lefteqn{(S'T'S')_{\Phi_{(i,j),\varepsilon},\Phi_{(i',j'),\varepsilon'}}=\sum_{x}S_{\Phi_{(i,j),\varepsilon},x}T'_{x,x}S_{\Phi_{(i',j'),\varepsilon'},x} } \\
 &=\frac{a'^2}{4}+\frac{a'^2}{4}\\
 &+\sum_{(i'',j'',\varepsilon'')\in J_1\times \{1,-1\}}(-1)^{m(ij''+ji''+i'j''+j'i''+i''j'')}
 \frac{(a'+\varepsilon\varepsilon''\delta_{i,i''}\delta_{j,j''})(a'+\varepsilon'\varepsilon''\delta_{i',i''}\delta_{j',j''})}{4}\\
 &+a'^2\sum_{(p,q)\in J_2}(-1)^{ip+jq+i'p+j'q}\zeta_{2m}^{pq}\\
 &=\frac{a'^2}{2}
 +\sum_{(i'',j'')\in \Z_2^2\setminus\{0\}}(-1)^{m(ij''+ji''+i'j''+j'i''+i''j'')}
 \frac{a'^2+\varepsilon\varepsilon'\delta_{i,i''}\delta_{j,j''}\delta_{i',i''}\delta_{j',j''}}{2}\\
 &+\frac{a'^2}{2}\sum_{(p,q)\in \tG\setminus \tG_2}(-1)^{ip+jq+i'p+j'q}\zeta_{2m}^{pq}\\
 &=\sum_{(i'',j'')\in \Z_2^2}(-1)^{m((i+i')j''+(j+j')i''+i''j'')}
 \frac{a'^2+\varepsilon\varepsilon'\delta_{i,i''}\delta_{j,j''}\delta_{i',i''}\delta_{j',j''}}{2}\\
 &+\frac{a'^2}{2}(-\sum_{(p,q)\in \tG_2}(-1)^{(i+i')p+(j+j')q}\zeta_{2m}^{pq}
 +\sum_{(p,q)\in \tG}(-1)^{(i+i')p+(j+j')q}\zeta_{2m}^{pq})\\
 &=\frac{(-1)^{mij}\varepsilon\varepsilon'\delta_{i,i'}\delta_{j,j'}}{2}
 +\frac{a'^2(-1)^{m(i+i')(j+j')}}{2}\sum_{(p,q)\in \tG}\zeta_{2m}^{(p+m(j+j'))(q+m(i+i'))}\\
 &=(-1)^{m(i+i')(j+j')}\frac{a'+(-1)^{mij}\varepsilon\varepsilon'\delta_{i,i'}\delta_{j,j'}}{2}.\\
\end{align*}
This coincides with $(\overline{T'S'T'})_{\Phi_{(i,j),\varepsilon},\Phi_{(i',j'),\varepsilon'}}$ if and only if $m$ is even. 
\end{proof}

\begin{figure}
$
\begin{array}{c !{\vline width 1pt} ccccc!{\vline width 1pt}}
\textbf{S} & & & & &  \\[3pt]
\Xhline{1pt}
0 & \frac{a'-b'}{2} & & & & \\[3pt]
 \widetilde{\pi}  & \frac{a'+b'}{2} &  \frac{a'-b'}{2}& & &  \\[3pt]
  \Phi_{(h',j'),\varepsilon'} & \frac{a'}{2} & \frac{a'}{2}   &\frac{a'+ \delta_{(h,j),(h',j')}\varepsilon \varepsilon' }{2} & &  \\[3pt]
  \Sigma_{(k',l')} &a' & a' &(-1)^{hk'+jl'}a' &2a' \cos \frac{(kl'+k'l)\pi}{m} &  \\[3pt]
   \widetilde{\mu}^{i'} & b' & -b' & 0 & 0 & ?  \\[3pt]
 \Xhline{1pt}
  & 0 &  \widetilde{\pi}  & \Phi_{(h,j),\varepsilon} & \Sigma_{(k,l)} & \widetilde{\mu}^i \\[3pt]
 \Xhline{1pt}
 \textbf{T} & 1 & 1 &1 & \zeta_{2m}^{ij} & ?  \Tstrut \\[3pt]
\end{array}
$
\caption{Modular data for the commutant of $G_2=\Z_2  \times \Z_2$ for $G=\mathbb{Z}_{2m} \times \mathbb{Z}_2$ with $\epsilon_{(km,l)}(i,j)=(-1)^{kj} $,  with the entries labeled by ``?'' undetermined. }
\label{stpart5}
\end{figure}

Since $\epsilon_{(0,1)}((0,1))=1 $, we can de-equivariantize $\cC$ to get another fusion category
$\widetilde{\cC}_1$ realized as endomorphisms of $P=M\rtimes_{\alpha_{(0,1)}}\Z_2$, with implementing unitary  $\lambda \in P$. 
Then 
 the set 
$$\{\alpha'_{(g,0)}\}_{g\in \Z_{2m}}\cup \{\alpha'_{(g,0)}\circ \rho'\}_{g\in \Z_{2m}}$$
exhausts all simple objects in $\widetilde{\cC}_1$ as we have $\alpha'_{(0,1)}=\Ad \lambda$, and we have 
$$\alpha'_{(g,0)}\circ \alpha'_{(h,0)}=\alpha'_{(g+h,0)},$$
$$\alpha'_{(-g,0)}\circ \rho'=\rho'\circ \alpha'_{(g,0)},$$
$$\rho'^2(x)=sxs^*+\sum_{g\in \Z_{2m}}(t_{(g,0)}\alpha'_{(g,0)}\circ\rho'(x)t_{(g,0)}^*+t_{(g,1)}\lambda
\alpha'_{(g,0)}\circ\rho'(x)(t_{(g,1)}\lambda)^*).$$ 
We denote by $\cP_{4m+1}$ the $C^*$-algebra generated by
$$\{s\}\cup \{t_{(g,0)}\}_{g\in \Z_{2m}}
\cup\{t_{(g,1)}\lambda\}_{g\in \Z_{2m}},$$
which satisfies the Cuntz algebra relations. 
Note that $\alpha'$ and $\rho'$ globally preserve $\cP_{4m+1}$. 
 
Let $\beta$ be the dual action of $\alpha_{(0,1)}$, which is a period 2 automorphism of $P$ satisfying 
$\beta(x)=x$ for any $x\in M$, and $\beta(\lambda)=-\lambda$. 
Let $\gamma=\alpha'_{(m,0)}\circ \beta$. Then $\gamma $ commutes with $\alpha' $ and $\rho' $, and therefore induces a $\Z_2 $-action on $\widetilde{\cC}_1$. The equivariantization of $\widetilde{\cC}_1$ with respect to this action is equivalent to $\cC $. 

Let $\widetilde{\cC}_2$ the fusion category generated by $\gamma$, 
which is equivalent to $\mathrm{Vec}_{\Z_2}$, 
and let $\widetilde{\cC}$ be the fusion category generated by $\widetilde{\cC}_1$ and $\widetilde{\cC}_2$. 
Then $\widetilde{\cC}$ is the crossed product category for the $\Z_2 $-action on $\widetilde{\cC}_1$ induced by $\gamma $. Therefore $\widetilde{\cC}$ is Morita equivalent to $\cC$, and their Drinfeld centers are braided equivalent. 

\begin{theorem} \label{ahthm}
The fusion category $\widetilde{\cC}$ is equivalent to $\widetilde{\cC}_1\boxtimes \widetilde{\cC}_2$. 
In consequence, 
$$\cZ(\cC)\cong \cZ(\widetilde{\cC}_1)\boxtimes \cZ(\mathrm{Vec}_{\Z_2}),$$
and the modular data of $\cZ(\widetilde{\cC}_1)$ are $(S',T')$. 
\end{theorem}

\begin{proof} 
Every intertwiner between products of objects of objects $\alpha'_{(g,0)}$ and $\rho'$ belongs to $\cP_{4m+1}$, and 
$\gamma$ acts on $\cP_{4m+1}$ trivially. 
On the other hand, every intertwiner between products of $\gamma$ is a scalar, and so we get 
the splitting 
$$ \widetilde{\cC}\cong \widetilde{\cC}_1\boxtimes \widetilde{\cC}_2.$$
\end{proof}

As a consequence of Theorem \ref{ahthm}, the modular data of the Drinfeld center of the de-equivariantization $\widetilde{\cC}_1 $ is determined by Theorem \ref{even}, except for the $I_0 \times I_0 $ corner. 

\begin{example} Let $G=\Z_4\times \Z_2$. It was shown in \cite{MR3859276} that there is a generalized Haagerup category $\cC$ for $G$ with $\epsilon_{(2k,l)}(i,j)=(-1)^{kj}$ such that the even parts of the Asaeda-Haagerup subfactor are Morita equivalent to the $\Z_2 $-de-equivariantization $\widetilde{\cC}_1 $. Therfore the modular data for the Drinfeld center of the Asaeda-Haagerup categories is given in part by the table in Figure \ref{stpart5}. To find the missing corner, we can work in the tube algebra of the generalized Haagerup category $\cC$, or directly in the tube algebra of the de-equivariantization $\widetilde{\cC}_1 $.

We have $|I_0|=8 $, and we find that the $T$-eigenvalues for $\widetilde{\mu}^i $ are of the form $\zeta_{17}^{3i^2} $, for $ 1 \leq i \leq 8 $. Since these numbers are distinct (and also different from the $T$-eigenvalues for the other objects), we can compute the corresponding projections in the tube algebra as eigenprojections of $\textbf{t} $. Then the corresponding block of the $S$-matrix is given by the formula
$$S_{\widetilde{\mu}^i,\widetilde{\mu}^{i'} }= -\frac{2}{\sqrt{17}} \cos \frac{12 \pi i i'}{17}  .$$
\end{example}
We do not know whether there exist further examples of ``generalized Asaeda-Haagerup categories'', i.e. generalized Haagerup categories for $\Z_{4m} \times \Z_2 $ with $\epsilon_{(2mk,l)}(i,j)=(-1)^{kj}$ for $m > 1 $.

\section{$\Z_2$-de-equivariantization} \label{deeq}
In this section we consider $ \Z_2$-de-equivariantizations of generalized Haagerup categories for cyclic groups. In all known examples of generalized Haagerup categories for even cyclic groups, $\epsilon $ is non-trivial. So we can only de-equivariantize with respect to $G_2$ when $|G|$ is divisible by $4$.

Therefore we consider a generalized Haagerup category $\cC $ for  $G=\Z_{4m}$ and assume $\epsilon_{2m}(g)=(-1)^g$, meaning $\alpha_{2m}(t_g)=(-1)^gt_g$. 
In this case, we may assume $\alpha_h(t_0)=t_{2h}$, $\alpha_h(t_1)=t_{1+2h}$  for $0\leq h<2m$. 
Since $\epsilon_{2m}(2m)=(-1)^{2m}=1$, we can perform de-equivariantization by $\alpha_{2m}$. 
We extend $\rho\in \End(M)$ and $\alpha_g\in \Aut(M)$ to the crossed product 
$M\rtimes_{\alpha_{2m}}\Z_2=M\vee \{\lambda\}$ by 
$\rho'(\lambda)=\lambda$ and $\alpha'_g(\lambda)=(-1)^g\lambda$. 
We denote by $\cD$ the fusion category generated by $\rho'$, which is a $\Z_2 $-de-equivariantization of $\cC $. 
Note that we have $\alpha'_{2m}=\Ad \lambda$, and the set 
$$\{\alpha'_g\}_{0\leq g<2m}\cup \{\alpha'_g\circ \rho'\}_{0\leq g<2m}$$
exhausts the equivalence classes of irreducible objects in $\cD$. 
We have decomposition 
$${\rho'}^2(x)=sxs^*+\sum_{g=0}^{2m-1}(t_g\alpha'_g\circ \rho'(x)t_g^*+t_{g+2m}\lambda \alpha'_g\circ \rho'(x)(t_{g+2m}\lambda)^*)$$
 
Let $d=d(\rho')=2m+\sqrt{4m^2+1}$, and let $\Lambda'$ be the global dimension of $\cD$: 
$$\Lambda'=2m(1+d^2)=2m(2+4md)=4m(4m^2+1+2m\sqrt{4m^2+1}).$$
Setting $a'=1/2m$ and $b'=1/\sqrt{4m^2+1}$, we get $1/\Lambda'=(a'-b')/2$. 

To represent an element in the cyclic group $\Z_{2m}$, we always use a number $0\leq g<2m$. 
We denote $(g\;h|1|h\;g)=(\alpha'_g\; \alpha'_h|1|\alpha'_h\;\alpha'_g)$ for short.
Since 
$$(g\;h|1|h\;g)(g\;k|1|k\;g)=\left\{
\begin{array}{ll}
(g\;h+k|1|h+k\;g) , &\quad h+k<2m \\
(-1)^g(g\;h+k-2m|1|h+k-2m\;g) , &\quad h+k\geq 2m
\end{array}
\right.,
$$
the map $$\Z_{2m}\ni h\mapsto \zeta_{4m}^{gh}(g\;h|1|h\;g)\in \cA_g$$ is a representation, where 
$\zeta_k=e^{\frac{2\pi i}{k}}$. 
Let 
$$p(g,k)=\frac{1}{2m}\sum_{h=0}^{2m-1}\zeta_{4m}^{gh}\zeta_{2m}^{hk}(g\;h|1|h\;g).$$
Then $p(g,k)\in \cA_g$ is a projection. 

For $\cA_0$, we have  
$$(0\;h|1|h\;0)(0\;\rho'|1|\rho'\;0)=(0\;\rho'|1|\rho'\;0)(0\;2m-h|1|2m-h\;0),$$
$$(0\;\rho'|1|\rho'\;0)^2=1_{\rho'}+\sum_{h=0}^{2m-1}2(0\;h|1|h\;0)(0\;\rho'|1|\rho'\;0)
=1_{\rho'}+4mp(0,0)(0\;\rho'|1|\rho'\;0)).$$ 
Thus we have $p(0,k)(0\;\rho'|1|\rho'\;0)=(0\;\rho'|1|\rho'\;0)p(0,2m-k)$ for $0<k<m$, 
and the linear span of 
$$\{p(0,0),\;p(0,m),\;p(0,0)(0\;\rho'|1|\rho'\;0),\;p(0,m)(0\;\rho'|1|\rho'\;0)\}$$
is a subalgebra of $\cA_0$ isomorphic to $\C^4$. 
Let 
$$z(\widetilde{\id})=\frac{1}{\Lambda'}\sum_{g\in G}((0\;g|1|g\;0)+d(0\;_g\rho'|1|_g\rho'\;0))
=\frac{2m}{\Lambda'}p(0,0)(1_0+d(0\;\rho'|1|\rho'\;0)),$$
$$E(0,0)=\frac{2md}{\Lambda'}p(0,0)(d1_0-(0\;\rho'|1|\rho'\;0)),$$
$$E(0,m)_{\pm}=\frac{1}{2}p(0,m)(1_0\pm (0\;\rho'|1|\rho'\;0)).$$
Then these are the minimal projections of the subalgebra. 
For $0< k<m$, we set 
$$E(0,k)_{11}=p(0,k),\quad E(0,k)_{22}=p(0,2m-k),$$
$$E(0,k)_{12}=p(0,k)(0\;\rho'|1|\rho'\;0),$$
$$E(0,k)_{21}=p(0,2m-k)(0\;\rho'|1|\rho'\;0),$$
and set $\cA_0^k=\mathrm{span}\{E(0,k)_{ij}\}_{1\leq i,j,\leq 2}$. 
Then $\cA_0^k$ is isomorphic to the 2 by 2 matrix algebra with a system 
of matrix units $\{E(0,k)_{ij}\}_{1\leq i,j\leq 2}$. 
Now we have 
$$\cA_{0}=\C z(\widetilde{\id})\oplus \C E(0,0) \oplus \C E(0,m)_+\oplus \C E(0,m)_{-}\oplus 
\bigoplus_{0<k<m}\cA^k_0.$$

The algebra $\cA_0$ acts on $\cA_{0,_g\rho'}$ by left multiplication, and 
$p(0,k)\cA_{0,_g\rho'}$ is a 2-dimensional space with a basis 
$$\{p(0,k)(0\;\rho'|t_{-g}|\rho'\; _g\rho'),\; p(0,k)(0\;\rho'|t_{2m-g}\lambda|\rho'\;_g\rho')\}.$$
Note that we have $(p(0,m)(0\;\rho'|1|\rho'\;0))^2=p(0,m)$. 

\begin{lemma} $(0\;\rho'|1|\rho'\;0)$ acts on $p(0,m)\cA_{0,_g\rho'}$ as multiplying by $(-1)^g$. 
\end{lemma}

\begin{proof} Since the two elements  
$$\{p(0,m)(0\;\rho'|t_{-g}|\rho'\; _g\rho'), p(0,m)(0\;\rho'|t_{2m-g}\lambda|\rho'\; _g\rho')\},$$
are exchanged, up to scalar multiple, by right multiplication of $(_g\rho'\;m|\lambda|m\;_g\rho')$, 
it suffices to show 
$$(0\;\rho'|1|\rho'\;0)p(0,m)(0\;\rho'|t_{-g}|\rho'\; _g\rho')=(-1)^g(0\;\rho'|t_{-g}|\rho'\; _g\rho').$$
Indeed, 
\begin{align*}
\lefteqn{(0\;\rho'|1|\rho'\;0)p(0,m)(0\;\rho'|t_{-g}|\rho'\; _g\rho')=p(0,m)(0\;\rho'|1|\rho'\;0)(0\;\rho'|t_{-g}|\rho'\; _g\rho')} \\
 &=p(0,m)\sum_{h=0}^{2m-1}(0\;_h\rho'|t_h^*\rho(t_{-g})t_h
 +\lambda^{-1}t_{h+2m}^*\rho(t_{-g})t_{h+2m}\lambda|_h\rho'\;_g\rho') \\
 &=p(0,m)\sum_{h=0}^{2m-1}(0\;h|1|h\;0)(0\;\rho'|\alpha_{-h}(t_h^*\rho(t_{-g})t_h
 +(-1)^gt_{h+2m}^*\rho(t_{-g})t_{h+2m})|\rho'\;_g\rho') \\
  &=p(0,m)\sum_{h=0}^{2m-1}(-1)^h\epsilon_h(-g)(0\;\rho'|t_{-h}^*\rho(t_{2h-g})t_{-h}
 +(-1)^gt_{-h+2m}^*\rho(t_{2h-g})t_{-h+2m}|\rho'\;_g\rho') \\
 &=p(0,m)(0\;\rho'|\sum_{h=0}^{4m-1}(-1)^h\epsilon_h(-g)t_{-h}^*\rho(t_{2h-g})t_{-h}|\rho'\;_g\rho').
\end{align*}
Here we have 
\begin{align*}
\lefteqn{\sum_{h=0}^{4m-1}(-1)^h\epsilon_h(-g)t_{-h}^*\rho(t_{2h-g})t_{-h}} \\
 &= \sum_{h=0}^{4m-1}(-1)^h\epsilon_h(-g)\epsilon_{2h-g}(g-2h) t_{-h}^*\alpha_{g-2h}\rho(t_{g-2h})t_{-h}\\
 &=\sum_{h=0}^{4m-1}(-1)^h\epsilon_h(-g)\epsilon_{2h-g}(g-2h)A_{g-2h}(h-g,h-g)t_{-g}.
\end{align*}
Since 
$$A_{g-2h}(h-g,h-g)=\epsilon_{g-h}(-h)A_{g-2h}(g-h,0)=\delta_{g,h}-\frac{\epsilon_{g-h}(-g)}{d-1},$$
we get 
\begin{align*}
\lefteqn{\sum_{h=0}^{4m-1}(-1)^h\epsilon_h(-g)\epsilon_{2h-g}(g-2h)A_{g-2h}(h-g,h-g)} \\
 &=(-1)^g-\frac{1}{d-1}\sum_{h=0}^{4m-1}(-1)^h\epsilon_h(-g)\epsilon_{2h-g}(g-2h)\epsilon_{g-h}(-g)\\
 &=(-1)^g-\frac{1}{d-1}\sum_{h=0}^{4m-1}(-1)^h\epsilon_h(-g)\epsilon_h(g-2h) \epsilon_{h-g}(g)\epsilon_{g-h}(-g)\\
 &=(-1)^g-\frac{\epsilon_{-g}(g)\epsilon_g(-g) }{d-1}\sum_{h=0}^{4m-1}(-1)^h\\
 &=(-1)^g,
\end{align*}
which shows the statement. 
\end{proof}

Now we treat the case with $g\neq 0$. 
Note that $p(g,2m)$ is well defined and equal to $p(g,0)$. 

\begin{lemma} For $0<g< 2m$ and $0\leq k\leq 2m$, 
$$p(g,k)(g\;\rho|\lambda|\rho\;2m-g)=(g\;\rho|\lambda|\rho\;2m-g)p(2m-g,2m-k).$$
\end{lemma}

\begin{proof} 
On one hand, we have 
\begin{align*}
\lefteqn{p(g,k)(g\;\rho'|\lambda|\rho'\;2m-g)} \\
 &=\frac{1}{2m}\sum_{h=0}^{2m-1}\zeta_{4m}^{gh}\zeta_{2m}^{hk}(g\;h|1|h\;g)(g\;\rho'|\lambda|\rho'\;2m-g) \\
 &=\frac{1}{2m}\sum_{h=0}^{2m-1}\zeta_{4m}^{gh}\zeta_{2m}^{hk}(g\;_h\rho'|\alpha'_h(\lambda)|_h\rho'\;2m-g) \\
 &=\frac{1}{2m}\sum_{h=0}^{2m-1}\zeta_{4m}^{gh}\zeta_{2m}^{hk}(-1)^h(g\;_h\rho'|\lambda|_h\rho'\;2m-g). 
\end{align*}
On the other hand, 
\begin{align*}
\lefteqn{(g\;\rho'|\lambda|\rho'\;2m-g)p(2m-g,2m-k)} \\
 &=\frac{1}{2m}\sum_{h=0}^{2m-1}\zeta_{4m}^{(2m-g)h}\zeta_{2m}^{h(2m-k)}(g\;\rho'|\lambda|\rho'\;2m-g)(2m-g\;h|1|h\;2m-g)  \\
 &=\frac{1}{2m}(g\;\rho'|\lambda|\rho'\; 2m-g)1_{2m-g}\\
 &+\frac{1}{2m}\sum_{h=1}^{2m-1}\zeta_{4m}^{(2m-g)h}\zeta_{2m}^{h(2m-k)}
 (g\;_{2m-h}\rho'|\lambda^{-1}\lambda\alpha'_g(\lambda)|_{2m-h}\rho'\;2m-g) \\
 &=\frac{1}{2m}(g\;\rho'|\lambda|\rho'\; 2m-g)\\
 &+\frac{1}{2m}\sum_{h=1}^{2m-1}\zeta_{4m}^{(2m-g)(2m-h)}\zeta_{2m}^{(2m-h)(2m-k)}(-1)^g
 (g\;_h\rho'|\lambda|_h\rho'\;2m-g) \\
 &=\frac{1}{2m}\sum_{h=0}^{2m-1}\zeta_{4m}^{gh}\zeta_{2m}^{hk}(-1)^h
 (g\;_h\rho'|\lambda|_h\rho'\;2m-g), \\
\end{align*}
which shows the statement.
\end{proof}

For $g=m$, the linear span of 
$$\{p(m,0),\;p(m,m),\; p(m,0)(m\;\rho'|\lambda|\rho'\;m),\;p(m,m)(m\;\rho'|\lambda|\rho'\;m)\},$$ 
is a commutative subalgebra of $\cA_m$ isomorphic to $\C^4$. 
Note that we have 
\begin{align*}
\lefteqn{(m\;\rho'|\lambda|\rho'\;m)^2} \\
 &=(m\;0|s^*\rho'(\lambda)\lambda\alpha_m(s)|0\;m)\\
 &+\sum_{h=0}^{2m-1}(m\;_h\rho'|t_h^*\rho'(\lambda)\lambda\alpha_m(t_h)
 +\lambda^{-1}t_{h+2m}^*\rho'(\lambda)\lambda\alpha'_m(t_{h+2m}\lambda)|_h\rho'\;m) \\
 &=1_m+\sum_{h=0}^{2m-1}(m\;_h\rho'|t_h^*  \alpha_m(t_h)
 +\lambda^{-1}t_{h+2m}^*\alpha_m(t_{h+2m})\lambda|_h\rho'\;m)\\
 &=1_m.
\end{align*}
Let
$$E(m,0)_{\pm}=\frac{1}{2}p(m,0)(1_m \pm (m\;\rho'|\lambda|\rho'\;m)),$$
$$E(m,m)_{\pm}=\frac{1}{2}p(m,m)(1_m \pm (m\;\rho'|\lambda|\rho'\;m)).$$
Then they are the minimal projections of the subalgebra. 
For $0<k<m$, we set 
$$E(m,k)_{11}=p(m,k),\quad E(m,k)_{22}=p(m,2m-k),$$
$$E(m,k)_{12}=p(m,k)(m\;\rho|\lambda|\rho\;m),$$
$$E(m,k)_{21}=p(m,2m-k)(m\;\rho|\lambda|\rho\;m).$$
and set $\cA_m^k=\mathrm{span}\{E(m,k)_{ij}\}_{1\leq i,j,\leq 2}$. 
Then $\cA_m^k$ is isomorphic to the 2 by 2 matrix algebra with a system 
of matrix units $\{E(m,k)_{ij}\}_{1\leq i,j\leq 2}$. 
Now we have 
$$\cA_m=\C E(m,0)_+\oplus \C E(m,0)_- \oplus \C E(m,m)_+\oplus \C E(m,m)_{-}\oplus 
\bigoplus_{0<k<m}\cA^k_m.$$

The algebra $\cA_m$ acts on $\cA_{m,_g\rho'}$ by left multiplication, and 
$p(m,k)\cA_{m,_g\rho'}$ is a 2-dimensional space with a basis 
$$\{p(m,k)(m\;\rho'|t_{m-g}|\rho'\; _g\rho'),\; p(m,k)(m\;\rho'|t_{3m-g}\lambda|\rho'\;_g\rho')\}.$$

\begin{lemma} Let the notation be as above. 
\begin{itemize}
\item[(1)] The action of $(m\;\rho'|\lambda|\rho'\;m)$ on $p(m,0)\cA_{m,_g\rho'}$ has eigenvalues both 1 and -1.
\item[(2)] The action of $(m\;\rho'|\lambda|\rho'\;m)$ on $p(m,m)\cA_{m,_g\rho'}$ has eigenvalues both 1 and -1. 
\end{itemize}
\end{lemma}

\begin{proof} Since $(m\;\rho'|\lambda|\rho' \;m)$ acts as an invertible transformation of period 2, 
it suffices to show that it is not a scalar. 
Indeed, it is easy to show that $(m\;\rho'|\lambda|\rho' \;m)$ switches the two basis elements 
(up to scalar multiple). 
\end{proof}

Finally, for $0<g<m$ and $0\leq k<2m$, we set 
$$E(g,k)_{11}=p(g,k),\quad E(g,k)_{22}=p(2m-g,2m-k),$$
$$E(g,k)_{12}=p(g,k)(g\;\rho'|\lambda|\rho'\;2m-g),$$
$$E(g,k)_{21}=p(2m-g,2m-k)(2m-g\;\rho'|\lambda|\rho'\;g),$$
and $\cB_g^k=\mathrm{span}\{E(g,k)_{ij}\}_{1\leq i,j,\leq 2}$. 
Then $\cB_g^k$ is isomorphic to the 2 by 2 matrix algebra with a system 
of matrix units $\{E(g,k)_{ij}\}_{1\leq i,j\leq 2}$. 
Now we have 
$$\cA_g\oplus \cA_{g,2m-g}\oplus \cA_{2m-g,g}\oplus \cA_{2m-g}= 
\bigoplus_{0\leq k<2m}\cB^k_g.$$

Let 
$$\pi=\id\oplus 2\bigoplus_{g=0}^{2m-1}\alpha'_g\circ \rho',$$
$$\varphi_+=\id\oplus2\bigoplus_{g=0}^{m-1}\alpha'_{2g}\circ \rho',$$
$$\varphi_-=\id\oplus2\bigoplus_{g=0}^{m-1}\alpha'_{2g+1}\circ \rho',$$
$$\psi=\alpha'_m\oplus \bigoplus_{g=0}^{2m-1}\alpha'_g\circ\rho',$$
$$\mu=2\bigoplus_{g=0}^{2m-1}\alpha'_g\circ\rho',$$
$$\sigma_0=\id\oplus \id\oplus 2\bigoplus_{g=0}^{2m-1} \alpha'_g\circ\rho'.$$
For $0<g\leq m$, let 
$$\sigma_g=\alpha'_g\oplus \alpha'_{2m-g}\oplus 2\bigoplus_{h=0}^{2m-1} \alpha'_h\circ\rho'.$$
Summing up the above argument, we get 

\begin{lemma}
\begin{itemize}
\item [(1)] $\id$ has a unique half-braiding $\cE_0(\xi)=1$.  

\item [(2)]  $\pi$ has a unique half-braiding, which gives $e(\tpi)_{0,0}=E(0,0)$, and 
$$\cE_\pi(h)_{0,0}=1,$$
$$\cE_\pi(_h\rho')_{0,0}=-\frac{1}{d^2}.$$
 
\item [(3)] Each of $\varphi_+$ and $\varphi_-$ has a unique half-braiding, which gives $e(\widetilde{\varphi_\pm})_{0,0}=E(0,m)_\pm$ and
$$\cE_{\varphi_\pm}(h)_{0,0}=(-1)^h,$$
$$\cE_{\varphi_\pm}(_h\rho')_{0,0}=\frac{\pm(-1)^h}{d}.$$

\item [(4)] $\psi$ has exactly 4 half-braidings parametrized by the set 
$$\{(+,+),\;(+,-),\;(-,+),\;(-,-)\},$$
which gives 
$e(\tpsi^{(\varepsilon_1,\varepsilon_2)})_{m,m}=E(m,\frac{1-\varepsilon_1}{2}m)_{\varepsilon_2}$, and 
$$\cE_{\psi}^{(\varepsilon_1,\varepsilon_2)}(h)_{m,m}=(\varepsilon_1 i)^h,$$
$$\cE_{\psi}^{(\varepsilon_1,\varepsilon_2)}(_h\rho')_{m,m}=\frac{\varepsilon_2 (-\varepsilon_1 i)^h}{d}\lambda,$$
$$\cE_{\psi}^{(\varepsilon_1,\varepsilon_2)}(m)_{_g\rho',_g\rho'}=(-1)^m\varepsilon_2a^{\varepsilon_1}(g)(\varepsilon_1i)^{m+g} \lambda.$$
Here we identify the symbols $+$ with $1$ and $-$ with $-1$ in an appropriate way. 
The number $a^{\varepsilon_1}(g)\in \{1,-1\}$ satisfies $a^\varepsilon(g+2)=a^\varepsilon(g)$. 

\item [(5)] $\sigma_0$ has exactly $m-1$ half-braidings parametrized by $0<k<m$, which gives 
$e(\widetilde{\sigma_0}^k)_{(0,s),(0,t)}=E(0,k)_{st}$, and  
$$\cE_{\sigma_0}^k(h)_{(0,1),(0,1)}=\zeta_{2m}^{kh},$$
$$\cE_{\sigma_0}^k(_h\rho')_{(0,s),(0,s)}=0.$$

\item [(6)] $\sigma_m$ has exactly $m-1$ half-braidings parametrized by $0<k<m$, 
which give $e(\widetilde{\sigma_m}^k)_{(m,s),(m,t)}=E(m,k)_{st}$, and 
$$\cE_{\sigma_m}^k(h)_{(m,1),(m,1)}=i^h\zeta_{2m}^{kh},$$
$$\cE_{\sigma_m}^k(_h\rho')_{(m,s),(m,s)}=0.$$
 
\item [(7)] For $0<g<m$, 
$\sigma_g$ has exactly $2m$ half-braidings parametrized by $0\leq k<2m$, which give 
$e(\widetilde{\sigma_g}^k)_{g,g}=p(g,k)$, $e(\widetilde{\sigma_g}^k)_{2m-g,2m-g}=p(2m-g,2m-k)$, and 
$$\cE_{\sigma_g}^k(h)_{g,g}=\zeta_{4m}^{gh}\zeta_{2m}^{kh},$$
$$\cE_{\sigma_g}^k(h)_{2m-g,2m-g}=\zeta_{4m}^{(2m-g)h}\zeta_{2m}^{(2m-k)h},$$
$$\cE_{\sigma_g}^k(_h\rho')_{g,g}=0.$$ 
\end{itemize}
\end{lemma}

\begin{proof} 
The only statement that we haven't shown yet is about $\cE_\psi^{(\varepsilon_1,\varepsilon_2)}(m)_{_g\rho',_g\rho'}$. 
We first note that $i^{m+g}(_g\rho'\;m|\lambda|m\;_g\rho')$ is a period two unitary in $\cA_{_g\rho'}$ satisfying
\begin{align*}
\lefteqn{p(m,\frac{1-\varepsilon}{2}m)(m\;\rho'|t_{m-g}|\rho'\;_g\rho')i^{m+g}(_g\rho'\;m|\lambda|m\;_g\rho')} \\
 &=i^{m+g}p(m,\frac{1-\varepsilon}{2}m)(m\;_m\rho'|\lambda^{-1}\rho'(\lambda)t_{m-g}\alpha'_m(\lambda)|_m\rho'\;_g\rho') \\
 &=(-1)^mi^{m+g}p(m,\frac{1-\varepsilon}{2}m)(m\;_m\rho'|t_{m-g}\lambda|_m\rho'\;_g\rho') \\
  &=(-1)^mi^{m+g}p(m,\frac{1-\varepsilon}{2}m)(m\;m|1|m\;m)(m\;\rho'|{\alpha'}_{m}^{-1}(t_{m-g}\lambda)|\rho'\;_g\rho') \\
  &=(-1)^mi^{m+g}p(m,\frac{1-\varepsilon}{2}m)(-i)^m(-1)^{\frac{1-\varepsilon}{2}}(m\;\rho'|(-1)^m\epsilon_{3m}(m-g)t_{3m-g}\lambda|\rho'\;_g\rho') \\
   &=\varepsilon\epsilon_{3m}(m-g)i^gp(m,\frac{1-\varepsilon}{2}m)(m\;\rho'|t_{3m-g}\lambda|\rho'\;_g\rho').
\end{align*}
Since the right multiplication of $i^{m+g}(_g\rho'\;m|\lambda|m\;_g\rho')$ and the left multiplication of $(m\;\rho'|\lambda|\rho'\;m)$ on 
the 2-dimensional space $p(m,\frac{1-\varepsilon}{2})\cA_{m,_g\rho'}$ are commuting period two transformations that are not scalars, 
they coincide up to sign, and 
\begin{align*}
\lefteqn{(m\;\rho'|\lambda|\rho'\;m)p(m,\frac{1-\varepsilon}{2}m)(m\;\rho'|t_{m-g}|\rho'\;_g\rho')} \\
 &=i^gb^{\varepsilon}(g)p(m,\frac{1-\varepsilon}{2}m)(m\;\rho'|t_{3m-g}\lambda|\rho'\;_g\rho'),
\end{align*}
with $b^{\varepsilon}(g)\in \{1,-1\}$.

Since $e(\tpsi^{(\varepsilon_1,\varepsilon_2)})_{m,m} (m\;\rho'|t_{m-g}|\rho'\;_g\rho')$ is a multiple of a partial isometry with 
range projection $e(\tpsi^{(\varepsilon_1,\varepsilon_2)})_{m,m}$, there exists a positive number $c$ satisfying 
\begin{align*}
\lefteqn{e(\tpsi^{(\varepsilon_1,\varepsilon_2)})_{_g\rho',_g\rho'}}\\
 &=c(m\;\rho'|t_{m-g}|\rho'\;_g\rho'))^*p(m,\frac{1-\varepsilon_1}{2}m)\frac{1}{2}(1_m+\varepsilon_2(m\;\rho'|\lambda|\rho'\;m))
 (m\;\rho'|t_{g-m}|\rho'\;_g\rho') \\
 &=\frac{c\epsilon_{m-g}(g-m)}{2}(_g\rho'\;\rho'|t_{g-m}^*|\rho'\;m)p(m,\frac{1-\varepsilon_1}{2}m)
 (m\;\rho'|t_{g-m}+\varepsilon_2b^{\varepsilon_1}(g)i^g t_{3m-g}\lambda|\rho'\;_g\rho')\\
 &=\frac{c\epsilon_{m-g}(g-m)}{4m}\sum_{h=0}^{2m-1}i^h\varepsilon_1^h(_g\rho'\;\rho'|t_{m-g}^*|\rho'\;m)
 (m\;_h\rho'|\alpha'_h(t_{m-g}+\varepsilon_2b^{\varepsilon_1}(g)i^g t_{3m-g}\lambda)|_h\rho'\;_g\rho').
\end{align*}
On the other hand, we have\begin{align*}
\lefteqn{e(\tpsi^{(\varepsilon_1,\varepsilon_2)})_{_g\rho',_g\rho'}} \\
 &=\frac{1}{4md}1_{_g\rho'}
+\frac{1}{4md}(_g\rho'\;m|\cE_\psi^{(\varepsilon_1,\varepsilon_2)}(m)_{_g\rho',_g\rho'}|m\;_g\rho') \\
 &+\frac{1}{4m}\sum_{h=0}^{2m-1}(_g\rho'\;_h\rho'|\cE_\psi^{(\varepsilon_1,\varepsilon_2)}(_h\rho')_{_g\rho',_g\rho'}|_h\rho'\;_g\rho')),
\end{align*}
and so,
\begin{align*}
1&=cd\epsilon_{m-g}(g-m)s^*\rho'(t_{g-m}+\varepsilon_2b^{\varepsilon_1}(g)i^g t_{3m-g}\lambda)t_{m-g}^*\alpha'_g\circ\rho'(s) \\
 &=cds^*\alpha'_{m-g}\circ\rho'(t_{m-g})t_{m-g}^*\circ\rho'(s)=c, \\
\end{align*}
\begin{align*}
\lefteqn{\cE_\psi^{(\varepsilon_1,\varepsilon_2)}(m)_{_g\rho',_g\rho'}} \\
 &=cd\epsilon_{m-g}(g-m)i^m\varepsilon_1^m\lambda^{-1}s^*\rho'(\alpha'_m(t_{m-g}+\varepsilon_2b^{\varepsilon_1}(g)i^g t_{3m-g}\lambda))t_{g-m}^* \alpha'_g\circ\rho'(s\lambda) \\
 &=d\epsilon_{m-g}(g-m)\epsilon_m(3m-g)i^m\varepsilon_1^m\varepsilon_2b^{\varepsilon_1}(g)i^g (-1)^{m+g}\lambda^{-1}s^*\rho'(t_{m-g})\lambda 
 t_{g-m}^* \rho(s)\lambda \\
 &=i^{m+g}\varepsilon_1^m\varepsilon_2b^{\varepsilon_1}(g)\epsilon_m(3m-g)\lambda.
\end{align*}
Setting $a^\varepsilon(g)=(-1)^mb^\varepsilon(g)\varepsilon^{g}\epsilon_m(3m-g)$, we get 
$$\cE_\psi^{(\varepsilon_1,\varepsilon_2)}(m)_{_g\rho',_g\rho'}=(-1)^m(\varepsilon_1i)^{m+g}\varepsilon_2a^{\varepsilon_1}(g)\lambda.$$


Let $0\leq g<m$. 
Since $(_{2g}\rho'\;g|1|g\;\rho')^*(_{2g}\rho'\;g|1|g\;\rho')=1_{\rho'}$ and $(_{2g}\rho'\;g|1|g\;\rho')(_{2g}\rho'\;g|1|g\;\rho')^*=1_{_{2g}\rho'}$, we have 
$$e(\tpsi^{(\varepsilon_1,\varepsilon_2)})_{_{2g}\rho',_{2g}\rho'}=(_{2g}\rho'\;g|1|g\;\rho')e(\tpsi^{(\varepsilon_1,\varepsilon_2)})_{\rho',\rho'}(_{2g}\rho'\;g|1|g\;\rho')^*,$$
and so \begin{align*}
\lefteqn{(_{2g}\rho'\;m|\cE_\psi^{(\varepsilon_1,\varepsilon_2)}(m)_{_{2g}\rho',_{2g}\rho'}|m\;_{2g}\rho')} \\
 &=(_{2g}\rho'\;g|1|g\;\rho')
 (\rho'\;m|\cE_\psi^{(\varepsilon_1,\varepsilon_2)}(m)_{\rho',\rho'}|m\;\rho')
 (\rho'\;2m-g|1|2m-g\;_{2g}\rho') \\
 &=(_{2g}\rho'\;g+m|\alpha'_g(\cE_\psi^{(\varepsilon_1,\varepsilon_2)}(m)_{\rho',\rho'})|g\;\rho')(\rho'\;2m-g|1|2m-g\;_{2g}\rho')\\
 &=(_{2g}\rho'\;m|\lambda^{-1}\alpha'_g(\cE_\psi^{(\varepsilon_1,\varepsilon_2)}(m)_{\rho',\rho'})\alpha'_{2g}\circ\rho'(\lambda)|m\;_{2g}\rho') \\
 &=(_{2g}\rho'\;m|(-1)^g\cE_\psi^{(\varepsilon_1,\varepsilon_2)}(m)_{\rho',\rho'}|m\;_{2g}\rho').
\end{align*}
This shows $a^{\varepsilon}(2g)=a^{\varepsilon}(0)$. In the same way, we can show $a^{\varepsilon}(2g+1)=a^{\varepsilon}(1)$. 
\end{proof}

In view of the above result, we introduce the following index sets: $J_1=\{+,-\}$, 
$$J_2=\{(+,+),\;(+,-),\;(-,+),\;(-,-)\}.$$
Let $\tG=\Z_{2m}\times \Z_{2m}$, and let 
$$\tG_*=\{(j,k)\in \tG;\;j\in \{0,m\},\; 0<k<m\}\cup\{(g,k)\in \tG;\; 0<g<m\}.$$
Then $\tG=\tG_2\sqcup \tG_*\sqcup -\tG_*$. 

\begin{lemma} For any $0\leq k<2m$,  
$$\cA_{_k\rho'}=\bigoplus_{(\varepsilon_1,\varepsilon_2)\in J_2}\C e(\tpsi^{(\varepsilon_1,\varepsilon_2)})_{_k\rho',_k\rho'}\oplus M_2(\C)^{\oplus 4m^2}.$$
\end{lemma}

\begin{proof}
We show the statement for $k=0$ as our standing assumptions for $\rho$ and $\alpha_k\circ \rho$ are equivalent. 

Let $\mathbf{t}_{\rho'}=d(\rho'\;\rho'|ss^*|\rho'\;\rho')$, $U=i^m (\rho'\;m|\lambda|m\;\rho')$, 
$x_{g,h}=(\rho'\;_g\rho'|t_{h+g}t_{h-g}^*|_g\rho'\;\rho')$ for $0\leq g<2m$, $0\leq h<4m$. 
Then 
$$\{1_{\rho'},\;U,\;\mathbf{t}_{\rho'},\;U\mathbf{t}_{\rho'}\}
\cup\{x_{g,h},Ux_{g,h}\}_{0\leq g<2m,\;0\leq h<4m}$$
forms a basis of $\cA_{\rho'}$, and  $\dim \cA_{\rho'}=4+16m^2$. 
Note that $\mathbf{t}_{\rho'}$ is central, and $U$ is a unitary of period two. 
Let $\cA_{\rho'}^0$ be the linear span of 
$$\{1_{\rho'},\;\mathbf{t}_{\rho'}\}
\cup\{x_{g,h}\}_{0\leq g<2m,\;0\leq h<4m}.$$
Then $\cA_{\rho'}^0$ is a $*$-subalgebra of $\cA_{\rho'}$. 
Since  
\begin{align*}
\lefteqn{Ux_{g,h}U^{-1}} \\
 &=(-1)^m(\rho'\;m|\lambda|m\;\rho')(\rho'\;_g\rho'|t_{h+g}t_{h-g}^*|_g\rho'\;\rho')(\rho'\;m|\lambda|m\;\rho') \\
 &= \left\{
\begin{array}{ll}
(-1)^m (\rho'\;_{g+m}\rho'|\alpha_m(t_{h+g}t_{h-g}^*)\lambda|_{g+m}\rho'\;\rho')(\rho'\;m|\lambda|m\;\rho'), &\quad 0\leq g<m,  \\
(-1)^{m+g}(\rho'\;m|\lambda|m\;\rho')(\rho'\;_{g-m}\rho'|\lambda t_{h+g}t_{h-g}^*|_{g-m}\rho'\;\rho') , &\quad m\leq g<2m,
\end{array}
\right.
\\
 &=(-1)^g (\rho'\;_g\rho'|\lambda\alpha_m(t_{h+g}t_{h-g}^*)\lambda|_g\rho'\;\rho')\\
&=(-1)^g\epsilon_m(h+g)\epsilon_m(h-g)x_{g,h+2m},
\end{align*}
we see that $U$ normalizes $\cA_{\rho'}^0$. 

In the same way as in the proof of Lemma \ref{mulfree1}, we can prove that $\cA_{\rho'}^0$ is abelian by showing 
that the restriction of $S_0^2$ to $\cA_{\rho'}^0$ is the identity. 
It is easy to show $S_0^2(1_{\rho'})=1_{\rho'}$ and $S_0^2(\mathbf{t}_{\rho'})=\mathbf{t}_{\rho'}$. 
For $(\rho'\;_g\rho'|x_{g,h}|_g\rho'\;\rho')$, 
\begin{align*}
\lefteqn{S_0^2((\rho'\;_g\rho'|x_{g,h}|_g\rho'\;\rho'))=S_0((_g\rho'\;\rho'|ds^*\alpha'_g\circ\rho'(x_{g,h}\rho'(s))|\rho'\;_g\rho'))} \\
 &=(\rho'\;_g\rho'|d^2s^*\rho'(s^*\alpha'_g\circ\rho'(x_{g,h}\rho'(s))\alpha'_g\circ\rho'(s))|_g\rho'\;\rho') \\
 &=(\rho'\;_g\rho'|d^2s^*\rho'(s^*\alpha'_g\circ\rho'(x_{g,h}\rho'(s)s))|_g\rho'\;\rho'), 
\end{align*}
which is $x_{g,h}$ thanks to the proof of Lemma \ref{mulfree1}. 
Thus the claim is shown. 

Since $\cA_{\rho'}^0$ is abelian and normalized by $U$, and $\cA_{\rho'}=\cA_{\rho'}^0+U\cA_{\rho'}^0$, any simple component of 
$\cA_{\rho'}$ is either $\C$ or $M_2(\C)$. 
We already known that 
$$\bigoplus_{(\varepsilon_1,\varepsilon_2)\in J_2}\C e(\tpsi^{(\varepsilon_1,\varepsilon_2)})_{_k\rho',_k\rho'}\cong \C^4$$
is a direct summand of $\cA_{\rho'}$.  
On the other hand, let 
$$\cA_{\rho'}^{\varepsilon_1,\varepsilon_2}=\{x\in \cA_{\rho'};\; Ux=\varepsilon_1 x,\; xU=\varepsilon_2 x\}=\frac{1+\varepsilon_1 U}{2}\cA_{\rho'}\frac{1+\varepsilon_2 U}{2}.$$
Then it is easy to show 
$$\dim \cA_{\rho'}^{+,+}=\dim \cA_{\rho'}^{-,-}=2+4m^2,$$
$$\dim \cA_{\rho'}^{+,-}=\dim \cA_{\rho'}^{-,+}=4m^2,$$
which shows that $\cA_{\rho'}$ contains $M_2(\C)^{\oplus 4m^2}$ as a direct summand. 
Thus we get the statement. 
\end{proof}

\begin{lemma} Let 
$$\mu=2\bigoplus_{g=0}^{2m-1}\alpha'_g\rho'.$$
Then $\mu$ has exactly $2m^2$ half-braidings $\{\cE_\mu^{j}\}_{j=1}^{2m^2}$, which give the remaining simple objects in $\cZ(\cD)$.   
\end{lemma}

\begin{proof}
Let 
$$z=z(\widetilde{id})+z(\tpi)+\sum_{\varepsilon\in J_1}z(\widetilde{\varphi_\varepsilon})
+\sum_{(\varepsilon_1,\varepsilon_2)\in J_2}z(\tpsi^{(\varepsilon_1,\varepsilon_2)})
+\sum_{(g,k)\in \tG_*}z(\widetilde{\sigma_g}^k),$$
which is a central projection of $\mathrm{Tube}(\cD)$. 
Note that we have $|\tG_*|=2(m^2-1)$. 
Thus thanks to the previous lemma. we have 
$$(1-z)\cA_{_k\rho'}\cong M_2(\C)^{\oplus 2m^2}.$$Since $(_{2k}\rho'\;k|1|k\;\rho')^*(_{2k}\rho'\;k|1|k\;\rho')=1_{\rho'}$ and $(_{2k}\rho'\;k|1|k\;\rho')(_{2k}\rho'\;k|1|k\;\rho')^*=1_{_{2k\rho'}}$ 
for any $0<k<m$, we have 
$$\dim (1-z)\cA_{_{2k}\rho',\rho'}=\dim (1-z)\cA_{\rho'}=8m^2.$$ 
In the same way, 
$$\dim (1-z)\cA_{_{2k+1}\rho',_1\rho'}=\dim(1-z)\cA_{_1\rho'}=8m^2.$$
Direct counting shows $\dim \cA_{_1\rho',\rho'}=16m^2$. 
On the other hand, we can write down the basis of $z\cA_{_1\rho',\rho'}$ coming from $\tpi$, $\tpsi^{(\varepsilon_1,\varepsilon_2)}$, 
and $\widetilde{\sigma_g}^k$, showing $\dim z\cA_{_1\rho',\rho'}=8m^2$. 
Thus we get $\dim (1-z)\cA_{_1\rho',\rho'}=8m^2$, and 
$$(1-z)\mathrm{Tube}(\cD)\cong M_{4m^2}(\C)^{\oplus 2m^2}.$$
This shows the statement. 
\end{proof}

Let 
$$I=\{1,2,\ldots,2m^2\}.$$

\begin{lemma}  With the above notation, 
$$\overline{\tpsi^{(\varepsilon_1,\varepsilon_2)}}=\tpsi^{(\varepsilon_1,(-1)^m\varepsilon_2)},\quad 
\overline{\tmu^i}=\tmu^i.$$ 
\end{lemma}

\begin{proof} Direct computation shows $S_0^2(p(m,k))=p(m,2m-k)$ and $S_0^2((m\;\rho'|\lambda|\rho'\;m))=(-1)^m(m\;\rho'|\lambda|\rho'\;m)$, 
which implies  
$S_0^2(z(\tpsi^{(\varepsilon_1,\varepsilon_2)}))=z(\tpsi^{(\varepsilon_1,(-1)^m\varepsilon_2)})$, and the first statement. 

Recall we have 
$$\cA_{\rho'}=\bigoplus_{(\varepsilon_1,\varepsilon_2)\in J_2}\C e(\tpsi^{(\varepsilon_1,\varepsilon_2)})_{\rho',\rho'}\oplus \cA_{\rho'}^1,
\quad \cA_{\rho'}^1\cong M_2(\C)^{\oplus 4m^2}.$$
We already know $S_0^2(e(\tpsi^{(\varepsilon_1,\varepsilon_2)})_{\rho,\rho})=e(\tpsi^{(\varepsilon_1,(-1)^m\varepsilon_2)})_{\rho,\rho}$. 
In the proof of the previous lemma, we saw that the subalgebra $\cA_{\rho'}^0$ includes all the central projection of 
$\cA_{\rho'}^1$, and $S_0^2$ acts on $\cA_{\rho'}^0$ trivially. 
Thus we get $S_0^2(z(\tmu^i))=z(\tmu^i)$.
\end{proof}

\begin{theorem} Let the notation be as above. 
The following set exhausts the simple objects of the Drinfeld center $\cZ(\cD)$: 
$$\{0,\;\tpi\}\cup\{\widetilde{\varphi_\varepsilon}\}_{\varepsilon\in J_1}\cup
\{\tpsi^{(\varepsilon_1,\varepsilon_2)}\}_{(\varepsilon_1,\varepsilon_2)\in J_2}\cup 
\{\widetilde{\sigma_g}^k\}_{(g,k)\in \tG_*}\cup \{\tmu^i\}_{i\in I}.$$
We have $\overline{\tpsi^{(\varepsilon_1,\varepsilon_2)}}=\tpsi^{(\varepsilon_1,(-1)^m\varepsilon_2)}$, and the others are self-conjugate.  
Except for $\tmu^i$-$\tmu^{i'}$ entries, the $S$-matrix and $T$-matrix are given as 
$$S_{0,0}=S_{\tpi,\tpi}=\frac{a'-b'}{2},\quad S_{0,\tpi}=\frac{a'+b'}{2},$$
$$S_{0,\widetilde{\varphi_\pm}}=S_{0,\tpsi^{(\varepsilon_1,\varepsilon_2)}}
=S_{\tpi,\widetilde{\varphi_\pm}}=S_{\tpi,\tpsi^{(\varepsilon_1,\varepsilon_2)}}=\frac{a'}{2},$$
$$S_{0,\widetilde{\sigma_g}^k}=S_{\tpi,\widetilde{\sigma_g}^\tau}=a',\quad 
S_{0,\tmu^i}=b',\quad 
S_{\tpi,\tmu^i}=-b',$$
$$S_{\widetilde{\varphi_{\varepsilon}},\widetilde{\varphi_{\varepsilon'}}}=
\frac{a'+\varepsilon\varepsilon'}{2},
\quad S_{\widetilde{\varphi_\pm},\tpsi^{(\varepsilon_1,\varepsilon_2)}}
=\frac{(-1)^ma'}{2},$$
$$S_{\widetilde{\varphi_\pm},\widetilde{\sigma_g}^k}=(-1)^g a',\quad S_{\widetilde{\varphi_\pm},\tmu^k}=0,$$
$$S_{\tpsi^{(\varepsilon_1,\varepsilon_2)},\tpsi^{(\varepsilon_1',\varepsilon_2')}}
=\frac{(-\varepsilon_1\varepsilon_1')^ma'+\varepsilon_2\varepsilon_2'(\varepsilon_1i)^m\delta_{\varepsilon_1,\varepsilon_1'}}{2},$$
$$S_{\tpsi^{(\varepsilon_1,\varepsilon_2)},\widetilde{\sigma_g}^k}
=(-\varepsilon_1)^g(-1)^ka',\quad 
S_{\tpsi^{(\varepsilon_1,\varepsilon_2)},\tmu^i}=0,$$
$$S_{\widetilde{\sigma_g}^k,\widetilde{\sigma_{g'}}^{k'}}=2a'\cos \frac{(gg'+gk'+g'k)\pi}{m},\quad 
S_{\widetilde{\sigma_g}^k,\tmu^i}=0,$$
$$T_{0,0}=T_{\tpi,\tpi}=T_{\widetilde{\varphi_{\pm}},\widetilde{\varphi_{\pm}}}=1,\quad 
T_{\tpsi^{(\varepsilon_1,\varepsilon_2)},\tpsi^{(\varepsilon_1,\varepsilon_2)}}=(\varepsilon_1i)^m,\quad 
T_{\widetilde{\sigma_g}^k,\widetilde{\sigma_g}^k}=\zeta_{4m}^{g^2+2kg}.$$
\end{theorem}

\begin{proof} The only statements that do not directly follow from the previous arguments are about  
$S_{\tpsi^{(\varepsilon_1,\varepsilon_2)},\tpsi^{(\varepsilon_1',\varepsilon_2')}}$ 
and $S_{\tpsi^{(\varepsilon_1,\varepsilon_2)},\tmu^i}$. 
Direct computation shows 
$$S_{\tpsi^{(\varepsilon_1,\varepsilon_2)},\tpsi^{(\varepsilon_1',\varepsilon_2')}}
=\frac{(-\varepsilon_1\varepsilon_1')^m}{4m}+\varepsilon_2\varepsilon_2'(\varepsilon_1'i)^m
\frac{a^{\varepsilon_1'}(0)+a^{\varepsilon_1'}(1)\varepsilon_1\varepsilon_1'}{4}.$$
Since $S$ is a symmetric matrix, we have 
$S_{\tpsi^{(\varepsilon_1,\varepsilon_2)},\tpsi^{(\varepsilon_1',\varepsilon_2')}}
=S_{\tpsi^{(\varepsilon_1',\varepsilon_2')},\tpsi^{(\varepsilon_1,\varepsilon_2)}}$, and 
$$(\varepsilon_1')^m(a^{\varepsilon_1'}(0)+a^{\varepsilon_1'}(1)\varepsilon_1\varepsilon_1')=
(\varepsilon_1)^m(a^{\varepsilon_1}(0)+a^{\varepsilon_1}(1)\varepsilon_1\varepsilon_1').$$
This is equivalent to 
$$a^+(0)-a^+(1)=(-1)^m(a^-(0)-a^-(1)).$$
Thus either $a^+(0)=a^+(1)$, $a^{-}(0)=a^-(1)$, or 
$$-a^+(1)=-(-1)^m a^-(1)=(-1)^ma^-(0)=a^+(0).$$ 

Assume $a^+(0)=a^+(1)$, $a^{-}(0)=a^-(1)$ first. 
Then we get
$$S_{\tpsi^{(\varepsilon_1,\varepsilon_2)},\tpsi^{(\varepsilon_1',\varepsilon_2')}}
=\frac{(-\varepsilon_1\varepsilon_1')^m}{4m}+
\frac{\varepsilon_2\varepsilon_2'(\varepsilon_1i)^ma^{\varepsilon_1}(0)\delta_{\varepsilon_1,\varepsilon_1'}}{2}.$$
Since $S$ is a unitary, 
\begin{align*}
\lefteqn{1=\sum_{a}|S_{\tpsi^{(\varepsilon_1,\varepsilon_2)},a}|^2} \\
 &=\frac{1}{16m^2}+\frac{1}{16m^2}+\frac{1}{16m^2}+\frac{1}{16m^2}+\sum_{s,t\in \{1,-1\}}
 |\frac{(-\varepsilon_1s)^m}{4m}+
\frac{\varepsilon_2t(\varepsilon_1i)^ma^{\varepsilon_1}(0)\delta_{\varepsilon_1,s}}{2}|^2 \\
 &+\frac{|\tG_*|}{(2m)^2}+\sum_{i\in I}|S_{\tpsi^{(\varepsilon_1,\varepsilon_2)},\tilde{\mu}^j}|^2 \\
 &=\frac{1}{2}-\frac{1}{4m^2}+2\sum_{s\in \{1,-1\}}(\frac{1}{16m^2}+\frac{\delta_{\varepsilon_1,s}}{4})
 +\sum_{i\in I}|S_{\tpsi^{(\varepsilon_1,\varepsilon_2)},\tilde{\mu}^j}|^2\\
 &=1+\sum_{i\in I}|S_{\tpsi^{(\varepsilon_1,\varepsilon_2)},\tilde{\mu}^j}|^2,
\end{align*}
showing $S_{\tpsi^{(\varepsilon_1,\varepsilon_2)},\tilde{\mu}^j}=0$. 

Recall the modular group relation $(S)^2=(ST)^3=C$, $TC=CT$, where $C_{a,b}=\delta_{a,\overline{b}}$, and $\overline{b}$ 
is determined by $\overline{S_{a,b}}=S_{a,\overline{b}}$. 
Recall $\overline{\tpsi^{(\varepsilon_1,\varepsilon_2)}}=\tpsi^{(\varepsilon_1,(-1)^m\varepsilon_2)}$.  
We compare the $\tpsi^{(\varepsilon_1,\varepsilon_2)}$-$\tpsi^{(\varepsilon_1',\varepsilon_2')}$ entries  
of the both sides of 
$$STS=C{T}^{-1}{S}^{-1}{T}^{-1}=C\overline{TST}=\overline{T}S\overline{T}.$$ 
\begin{align*}
\lefteqn{(STS)_{\tpsi^{(\varepsilon_1,\varepsilon_2)},\tpsi^{(\varepsilon_1',\varepsilon_2')}}} \\
 &=\frac{1}{16m^2}+\frac{1}{16m^2}+\frac{1}{16m^2}+\frac{1}{16m^2}\\
 &+\sum_{s,t\in \{1,-1\}}(\frac{(-\varepsilon_1s)^m}{4m}+
\frac{\varepsilon_2t(\varepsilon_1i)^ma^{\varepsilon_1}(0)\delta_{\varepsilon_1,s}}{2})  
(\frac{(-\varepsilon_1's)^m}{4m}+
\frac{\varepsilon_2't(\varepsilon_1'i)^ma^{\varepsilon_1'}(0)\delta_{\varepsilon_1',s}}{2})(si)^m\\
 &+\sum_{(g,k)\in \tG_*} \frac{(-\varepsilon_1)^g(-1)^k}{2m}\frac{(-\varepsilon_1')^g(-1)^k}{2m}\zeta_{4m}^{g^2+2gk}\\
 &=\frac{1}{4m^2}+2\sum_{s\in \{1,-1\}}(\frac{(\varepsilon_1\varepsilon_1')^m}{16m^2}
 +\frac{\varepsilon_2\varepsilon_2'(-\varepsilon_1\varepsilon_1')^m a^{\varepsilon_1}(0) a^{\varepsilon_1'}(0)
\delta_{\varepsilon_1,s}\delta_{\varepsilon_1',s}}{4}) (si)^m \\
 &+\sum_{k=1}^{m-1} \frac{1}{4m^2}+\sum_{k=1}^{m-1} \frac{(\varepsilon_1\varepsilon_1')^m}{4m^2}i^{m+2k}
 +\sum_{g=1}^{m-1}\sum_{k=0}^{2m-1} \frac{(\varepsilon_1\varepsilon_1')^g}{4m^2}\zeta_{4m}^{g^2+2gk}\\
 &=\frac{1}{4m^2}+(\varepsilon_1\varepsilon_2i)^m\frac{1+(-1)^m}{8m^2}
 +\frac{\varepsilon_2\varepsilon_2'(-\varepsilon_1i)^m\delta_{\varepsilon_1,\varepsilon_1'}}{2}
 +\frac{m-1}{4m^2}-(\varepsilon_1\varepsilon_2i)^m\frac{1+(-1)^m}{8m^2}\\
 &=\frac{1}{4m}+\frac{\varepsilon_2\varepsilon_2'(-\varepsilon_1i)^m\delta_{\varepsilon_1,\varepsilon_1'}}{2}.
\end{align*}
On the other hand, 
$$(\overline{T}S\overline{T})_{\tpsi^{(\varepsilon_1,\varepsilon_2)},\tpsi^{(\varepsilon_1,\varepsilon_2)}}=
\frac{1}{4m}+
\frac{\varepsilon_2\varepsilon_2'(-\varepsilon_1i)^ma^{\varepsilon_1}(0)\delta_{\varepsilon_1,\varepsilon_1'}}{2}.
$$
Thus we get $a^\varepsilon(0)=1$.

Assume now that that the second case 
$$-a^+(1)=-(-1)^m a^-(1)=(-1)^ma^-(0)=a^+(0),$$
occurs. 
Then 
$$S_{\tpsi^{(\varepsilon_1,\varepsilon_2)},\tpsi^{(\varepsilon_1',\varepsilon_2')}}
=\frac{(-\varepsilon_1\varepsilon_1')^m}{4m}+
\frac{\varepsilon_2\varepsilon_2'a^{+}(0)i^m\delta_{\varepsilon_1,-\varepsilon_1'}}{2}.$$
In the same way as above, we get $S_{\tpsi^{(\varepsilon_1,\varepsilon_2)},\tilde{\mu}^j}=0$, and 
 \begin{align*}
\lefteqn{(STS)_{\tpsi^{(\varepsilon_1,\varepsilon_2)},\tpsi^{(\varepsilon_1',\varepsilon_2')}}} \\
 &=\frac{1}{16m^2}+\frac{1}{16m^2}+\frac{1}{16m^2}+\frac{1}{16m^2}\\
 &+\sum_{s,t\in \{1,-1\}}(\frac{(-\varepsilon_1s)^m}{4m}+
\frac{\varepsilon_2ti^ma^+(0)\delta_{\varepsilon_1,-s}}{2})  
(\frac{(-\varepsilon_1's)^m}{4m}+
\frac{\varepsilon_2'ti^ma^{+}(0)\delta_{\varepsilon_1',-s}}{2})(si)^m\\
 &+\sum_{(g,k)\in \tG_*} \frac{(-\varepsilon_1)^g(-1)^k}{2m}\frac{(-\varepsilon_1')^g(-1)^k}{2m}\zeta_{4m}^{g^2+2gk}\\
 &=\frac{1}{4m^2}+2\sum_{s\in \{1,-1\}}(\frac{(\varepsilon_1\varepsilon_1')^m}{16m^2}
 +\frac{\varepsilon_2\varepsilon_2'(-1)^m\delta_{\varepsilon_1,-s}\delta_{\varepsilon_1',-s}}{4}) (si)^m \\
 &+\sum_{k=1}^{m-1} \frac{1}{4m^2}+\sum_{k=1}^{m-1} \frac{(\varepsilon_1\varepsilon_1')^m}{4m^2}i^{m+2k}
 +\sum_{g=1}^{m-1}\sum_{k=0}^{2m-1} \frac{(\varepsilon_1\varepsilon_1')^g}{4m^2}\zeta_{4m}^{g^2+2gk}\\
 &=\frac{1}{4m^2}+(\varepsilon_1\varepsilon_2i)^m\frac{1+(-1)^m}{8m^2}
 +\frac{\varepsilon_2\varepsilon_2'(\varepsilon_1i)^m\delta_{\varepsilon_1,\varepsilon_1'}}{2}
 +\frac{m-1}{4m^2}-(\varepsilon_1\varepsilon_2i)^m\frac{1+(-1)^m}{8m^2}\\
 &=\frac{1}{4m}+\frac{\varepsilon_2\varepsilon_2'(\varepsilon_1i)^m\delta_{\varepsilon_1,\varepsilon_1'}}{2}.
\end{align*}
On the other hand, 
$$(\overline{T}S\overline{T})_{\tpsi^{(\varepsilon_1,\varepsilon_2)},\tpsi^{(\varepsilon_1,\varepsilon_2)}}=
\frac{1}{4m}+
\frac{\varepsilon_2\varepsilon_2'i^ma^+(0)\delta_{\varepsilon_1,-\varepsilon_1'}}{2},$$
which is a contradiction. 
\end{proof}

\begin{figure}
$
\begin{array}{c !{\vline width 1pt} cccccc!{\vline width 1pt}}
\textbf{S} & & & & &  \\[3pt]
\Xhline{1pt}
0 & \frac{a'-b'}{2} & & & & \\[3pt]
 \widetilde{\pi}  & \frac{a'+b'}{2} &  \frac{a'-b'}{2}& & &  \\[3pt]
 \widetilde{\phi_{\varepsilon}' }  & \frac{a'}{2} &  \frac{a'}{2}& \frac{a'+\varepsilon \varepsilon'}{2} & &  \\[3pt]
\widetilde{\psi}^{(\varepsilon_1',\varepsilon_2')} & \frac{a'}{2} & \frac{a'}{2}   & \frac{(-1)^m a'}{2} &\frac{(-\varepsilon_1 \varepsilon'_1)ma'+\varepsilon_2 \varepsilon'_2 (\varepsilon_1i)^m \delta_{\varepsilon_1,\varepsilon_1'}  }{2} &  \\[3pt]
 \widetilde{\sigma_g'}^{k'}  &a' & a' & (-1)^{g'} & (-\varepsilon_1)^{g'}(-1)^{k'}a'  & 2a' \cos \left( \frac{(gg'+gk'+g'k)\pi}{m} \right) &  \\[3pt]
   \widetilde{\mu}^{i'} & b' & -b' & 0 & 0 & 0 & ?  \\[3pt]
 \Xhline{1pt}
  & 0 &  \widetilde{\pi}  &  \widetilde{\phi_{\varepsilon} } &  \widetilde{\psi}^{(\varepsilon_1,\varepsilon_2)} &\widetilde{\sigma_g}^k & \widetilde{\mu}^i \\[3pt]
 \Xhline{1pt}
 \textbf{T} & 1 & 1 &1 &(\varepsilon_1i)^m  &  \zeta_{4m}^{g^2+2kg} & ?  \Tstrut \\[3pt]
\end{array}
$
\caption{Modular data for the $\Z_2 $-de-equivariantization of a generalized Haagerup category for $G=\Z_{4m} $ with $\epsilon_{2m}(g)=(-1)^g $, with entries labeled by ``?'' undetermined. }
\label{stpart6}
\end{figure}

To compute the missing corner in examples, we use Mathematica and the formulas for tube algebras for a de-equivariantization of a generalized Haagerup category, included in the online appendix.

\begin{example}
For $G=\Z_4 $, let $\cC $ be the generalized Haagerup category satisfying Eq.(\ref{Q2}). Then $\epsilon_{2}(g)=(-1)^g $, and we have the $\Z_2$-de-equivariantization  $\cD$, which is the principal even part of the $2D2$ subfactor \cite{MR3394622,MR3827808}. 

We can compute the $I \times I $-corner of the modular data of the Drinfeld center by diagonalizing the action of $\textbf{t} $ on the tube algebra, using a similar method to that outlined in Section \ref{corner1}. We have $|I_0|=2 $. The two $T$-eigenvalues for $\widetilde{\mu}^i $ are $\zeta_5^{\pm 2} $, and the corresponding block of the $S$-matrix is 
$$ 
\frac{1}{10} \left(
\begin{array}{cc}
-5+\sqrt{5} & 5+\sqrt{5} \\
5+\sqrt{5}  & -5+\sqrt{5}  \\
\end{array}
\right).
$$ 

This $S$-matrix looks similar to that of the commutant of $G_2$ in the center of the generalized Haagerup category for $\Z_2 \times \Z_2 $, which also has rank $10$, but with differences in several blocks.

\end{example}

\begin{example}
The generalized Haagerup category $ \cC$ for $\Z_8 $ with $(\epsilon,A) $ given in the Mathematica notebook \texttt{solutions.nb} satisfies $\epsilon_{4}(g)=(-1)^g $.
Again we can compute the $I \times I $-corner of the modular data of the Drinfeld center of the $\Z_2 $-de-equivariantization $\cD $ by diagonalizing the action of $\textbf{t} $ on the tube algebra. 
We have $|I|=8 $, and we find that the missing corner is the same as that of the Drinfeld center of the Asaeda-Haagerup categories:  the eigenvalues of the $\widetilde{\mu}^i $ are $\zeta_{17}^{3i^2} $, for $ 1 \leq i \leq 8 $, with
$$S_{\widetilde{\mu}^i ,\widetilde{\mu}^{i'} }= -\frac{2}{\sqrt{17}} \cos  \frac{12 \pi i i'}{17}  .$$

\end{example}
\section{$\Z_3$-equivariantization} \label{4442}

In this section we compute the modular data for the Drinfeld center of the even part of the $4442$ subfactor. The $4442$ subfactor was first constructed in \cite{MR3314808}. It is self dual, and its even part is a $\Z_3 $-equivariantization of the generalized Haagerup category for $G=\Z_2 \times \Z_2 $ \cite[Corollary 9.5]{MR3827808}. 

The structure constants $(\epsilon,A) $ of the generalized Haagerup category $ \cC$ are given in \cite[Theorem 9.4]{MR3827808} in terms of a sign $s$ and a fourth root of unity $\dfrac{z}{\sqrt{d}} $. We fix $s=1$ and $z=\sqrt{d} $.
We denote the elements of $\Z_2 \times \Z_2 $ by $\{ 0,a,b,c \} $, in the same order as the matrices $(\epsilon,A) $.

Let  $\theta $ be the automorphism of $\Z_2 \times \Z_2 $
satisfying $$\theta(a)=b, \ \theta(b)=c, \theta(c)=a .$$
Then $\epsilon $ and  $A$ are invariant under $\theta $, so we can define an automorphism $\gamma(s)=s $ and $\gamma(t_g)=t_{\theta(g)} , g \in G$. Then the even part of the $4442$ subfactor is equivalent to the equivariantization $ \cC^{\Z_3}$ with respect to the action generated by $\gamma $. 

To describe the Drinfeld center of $ \cC^{\Z_3}$, it is easier to work instead with the Morita equivalent category  $ \cC \rtimes \Z_3$, which is generated by $\cC $ and $\gamma $. The category $\cC \rtimes \Z_3$ leaves the Cuntz algebra generated by $ s$ and $t_g, g \in G $ invariant, so we can do all of our calculations in terms of this Cuntz algebra.

Note that $H=\text{Inv}(\cC \rtimes \Z_3) = (\Z_2 \times \Z_2) \rtimes_{\theta} \Z_3$ is isomorphic to the alternating group on four letters. We denote a typical element of $H$ by $$(i,g)=\gamma^i \circ \alpha_g,  \ i \in \{0,1,2\}, \ g \in \{0,a,b,c \} .$$

The tube algebra of $\mathcal{C} \rtimes_{\gamma} \Z_3 $ inherits the $\Z_3$-grading of the category, and so we look for the minimal central projections in the graded components of the tube algebra separately. For detailed calculations within the tube algebra, such as diagonalization of $\textbf{t} $, we use Mathematica and the tube algebra formulas for an equivariantization of a generalized Haagerup category, which are included in the online appendix.

 We first look at the trivially-graded component of $\Tube (\mathcal{C} \rtimes_{\gamma} \Z_3)$, which contains $\Tube \cC $.
 The group part $\cA _G$ of $\Tube \cC $ is Abelian, and following the notation of Section \ref{restrict}, its minimal projections are $z(k) $ and $E(k,\epsilon_k) $, for $k \in G $; and $E(k,\epsilon_l)_{\pm} $, for $k \neq l \in G $.
 
In the larger algebra $\Tube  (\mathcal{C} \rtimes_{\gamma} \Z_3) $, we can break up the trivially-graded group part as
$$\cA_{(0,G)}=\cA_{(0,0)} \oplus \cA_{(0,G\backslash 0)}  ,$$ with $\text{dim}(\mathcal{A}_{(0,0)})=24 $ and $\text{dim}(\cA_{(0,G\backslash 0)})=72$.

For $g,h \neq 0 \in G $, we have that $ 1_g $ is equivalent to $1_h $, since $g$ and $h$ are in the same conjugacy class in $H$. Therefore, there are $8$ minimal central projections in $\mathcal{A}_{(0,G \backslash \{0\})} $ which all have rank three. They are:
 $$z(a)+z(b)+z(c), \quad E(a,\epsilon_a) +E(b,\epsilon_b) +E(c,\epsilon_c) $$
 and
 $$ E(a,\epsilon_{0} )_{\varepsilon}+E(b,\epsilon_{0} )_{\varepsilon}+E(c,\epsilon_{0} )_{\varepsilon} ,  \quad E(a,\epsilon_{b} )_{\varepsilon}+E(b,\epsilon_{c} )_{\varepsilon}+E(c,\epsilon_{a} )_{\varepsilon}, $$ $$ E(a,\epsilon_{c} )_{\varepsilon}+E(b,\epsilon_{a} )_{\varepsilon}+E(c,\epsilon_{b} )_{\varepsilon} , \quad  \varepsilon \in \{ \pm \}.$$
 
 To find the center of $\mathcal{A}_{(0,0)} $, we also consider the projections $$p^{\omega} =\frac{1}{3}\sum_{i=0}^{2} \omega^i ( (0,0) \;  (i,0) | 1  | (i,0) \; (0,0)  ),$$
 for $\omega  $ a cube root of unity.

Then the minimal central projections of $\mathcal{A}_{(0,0)} $ are $$p^{\omega} z(0) \text{ and }  p^{\omega}E(0,0), \quad \omega \in \{1,\zeta_3,\zeta_3^{-1} \},$$
which each have rank one, and $$E(0,\epsilon_a)_{\varepsilon}+E(0,\epsilon_b)_{\varepsilon}+E(0,\epsilon_c)_{\varepsilon}, \ \varepsilon \in \{ \pm \},  $$
 which each have rank three.
 
 Therefore there are $16$ minimal central projections in $\mathcal{A}_{(0,G)} $. To find the corresponding minimal central projections in  $\Tube (\mathcal{C} \rtimes_{\gamma} \Z_3)$, we follow a similar procedure as in Section \ref{corner}. Namely, for each minimal central projection $p$ in $\mathcal{A}_{(0,G)} $, we choose a minimal subprojection $p'$ and a basis $\{ j_s \}_{s \in S} $ of mutually orthogonal partial isometries for $p' \mathcal{A}_{(0,G),{}_{(0,G)} \rho}$ (this is not difficult since for a fixed $p'$ and $h\in G$, the space $p' \mathcal{A}_{(0,G),{}_{(0,h)} \rho}$ turns out to be at most $3$-dimensional). Then the corresponding minimal central projection in the tube algebra is $$p+ \sum_{s \in S} \limits j_s^*j_s.$$ 

After computing the $16$ minimal central projections of  $\Tube (\mathcal{C} \rtimes_{\gamma} \Z_3)$ which have nontrivial component in $\mathcal{A}_{(0,G)} $, we can list the corresponding objects in the Drinfeld center $\cZ (\mathcal{C} \rtimes_{\gamma} \Z_3)$.

 \begin{lemma} \label{l1}
 \begin{enumerate}
 \item  The (identity) object $(0,0) $
	and the object $(0,0) \oplus \bigoplus_{ g \in G} \limits {}_{(0,g)} \rho $ 
	each have three irreducible half-braidings.
 
 \item The objects $ \bigoplus_{0 \neq g \in G} \limits  (0,g)$ and
	$\bigoplus_{0 \neq g \in G} \limits ( (0,g) \oplus 3{}_{(0,g)} \rho   )\oplus 3 {}_{(0,0)} \rho $
	each have a unique irreducible half-braiding.

\item The objects  $\bigoplus_{0 \neq g \in G} \limits ( (0,g) \oplus {}_{(0,g)} \rho   ) \oplus 3 {}_{(0,0)} \rho $
	and $\bigoplus_{0 \neq g \in G} \limits ( (0,g) \oplus 2{}_{(0,g)} \rho   ) $ each have three irreducible half-braidings.
	
	 \item The objects $3(0,0)\oplus 2 \bigoplus_{0 \neq g \in G} \limits {}_{(0,g)} \rho $	
	and $3 ((0,0) \oplus {}_{(0,0)} \rho  ) \oplus \bigoplus_{0 \neq g \in G} \limits {}_{(0,g)} \rho $ each have a unique irreducible half-braiding.

	 \end{enumerate}
 \end{lemma}

  We can now find the remaining minimal central projections in the $0$-graded component of $\Tube (\mathcal{C} \rtimes_{\gamma} \Z_3)$ as follows. Let $Z_{(0,G)} $ be the sum of the $16$ minimal central projections with non-trivial component in $ \cA_{(0,G)}$.
  The dimension of $\mathcal{A}_{{}_{(0,0)} \rho} $ is $72$, and for $h \neq 0$ the dimensions of 
  $\mathcal{A}_{{}_{(0,h)} \rho} $ and  $\mathcal{A}_{ {}_{(0,0)} \rho,  {}_{(0,h)} \rho} $ are
  $56$ and $48$, respectively. On the other hand, the dimensions of  $Z_{(0,G)}\mathcal{A}_{{}_{(0,0)} \rho} $, $Z_{(0,G)}\mathcal{A}_{{}_{(0,h)} \rho} $, and  $Z_{(0,G)}\mathcal{A}_{ {}_{(0,0)} \rho,  {}_{(0,h)} \rho} $ are $48$, $32 $, and $24$, respectively. This imples that  
  $$\text{dim}( (1-Z_{(0,G)})\mathcal{A}_{{}_{(0,0)} \rho}) =\text{dim}((1-Z_{(0,G)})\mathcal{A}_{{}_{(0,h)} \rho})=\text{dim}( (1-Z_{(0,G)})\mathcal{A}_{ {}_{(0,0)} \rho,  {}_{(0,h)} \rho} )=24.$$
  Therefore all of the subalgebras $(1-Z_{(0,G)})\mathcal{A}_{{}_{(0,h)} \rho}) $ are $24$-dimensional, and the corresponding projections $ (1-Z_{(0,G)})1_{{}_{(0,h)}\rho}$ are equivalent in the tube algebra.
  
  To find the minimal central projections in $(1-Z_{(0,G)})\mathcal{A}_{{}_{(0,h)} \rho} $, we diagonalize $\textbf{t}_{{}_{(0,h)} \rho}$.  For each $h \in G$ the minimal polynomial of $\textbf{t}_{{}_{(0,h)} \rho}$ is 
  
  $$q(x)=(x^2-1)(x^2-\zeta_5^2 )(x^2-\zeta_5^{-2 }) .$$

We let $$q_{\lambda}(x)=\frac{q(x)}{x -\lambda}  \text{ and } p^{\lambda}_h=\frac{q_{\lambda}( \textbf{t}_{{}_{(0,h)} \rho} ) }{q_{\lambda}(\lambda)} , \quad \lambda \in \{ \pm\zeta_5^{\pm 1} \}.$$

We find that each $p^{\lambda}_h $ is a rank three projection, which is a minimal central projection in $ \mathcal{A}_{{}_{(0,h)} \rho}$ for $\lambda=-\zeta_5^{\pm 1}$, but splits as a sum of three minimal central projections for $ \lambda=\zeta_5^{\pm 1}$. Then we can match up the minimal subprojections of the $p^{\lambda}_h $ for $\lambda= \zeta_5^{\pm 1}$ for different $h$ to find the corresponding minimal central projections in $\Tube (\mathcal{C} \rtimes_{\gamma} \Z_3)$.
  
%
%
%

\begin{lemma} \label{l2}
\begin{enumerate}
 \item The object $\bigoplus_{h \in G}  \limits {}_{(0,h)} \rho $ has $6$ irreducible half-braidings.
 \item  The object $3 \bigoplus_{h \in G}  \limits {}_{(0,h)} \rho$ has $2$ irreducible half-braidings.
\end{enumerate}
\end{lemma}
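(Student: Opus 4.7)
The plan is to enumerate the minimal central projections of the tube algebra whose support lies entirely in $\bigoplus_{h\in G}\mathcal{A}_{{}_{(0,h)}\rho}$ and is orthogonal to the sixteen minimal central projections already constructed. By the preceding dimension tally, this orthogonal portion contributes a $24$-dimensional subalgebra in each $\mathcal{A}_{{}_{(0,h)}\rho}$, and the tube-algebra equivalences coming from conjugation by elements of $H$ reduce the analysis to the two representatives $h=0$ and $h=a$ of the $\theta$-orbits on $G$.

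For $\zeta\in\{-e^{2\pi i/5},\,e^{3\pi i/5}\}$, the projection $p^\zeta_h$ is already minimal central of rank $3$ in $\mathcal{A}_{{}_{(0,h)}\rho}$. First we would show that $p^\zeta_0$ and $p^\zeta_a$ have a common central cover in the tube algebra by exhibiting a nonzero intertwiner $p^\zeta_0\,x\,p^\zeta_a\neq 0$ for some $x\in\mathcal{A}_{{}_{(0,0)}\rho,\,{}_{(0,a)}\rho}$. Combining across the $\theta$-orbit $\{a,b,c\}$, this produces a single tube-algebra minimal central projection of rank $3$ in every $\mathcal{A}_{{}_{(0,h)}\rho}$, hence one half-braiding with underlying object $3\sum_{h\in G}{}_{(0,h)}\rho$; running the argument for each of the two values of $\zeta$ accounts for the two half-braidings of this type.

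For $\zeta\in\{e^{2\pi i/5},\,-e^{3\pi i/5}\}$, $p^\zeta_h$ has rank $3$ but is not minimal. The next step is to decompose $p^\zeta_h=e^h_1+e^h_2+e^h_3$ into three orthogonal rank-$1$ projections in $\mathcal{A}_{{}_{(0,h)}\rho}$ by diagonalising the restriction of a generic basis element $x\in\mathcal{B}_{{}_{(0,h)}\rho}$ to the three-dimensional range of $p^\zeta_h$, solving the resulting cubic with computer assistance as in the general outline of Section~3. The remaining task is to pair the triple $\{e^0_i\}$ against $\{e^a_i\}$ so that matched pieces lie in a common tube-algebra equivalence class. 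This matching is the main obstacle: there are six a priori candidate bijections $\sigma$, and the correct one is singled out by the requirement that $e^0_i\,\mathcal{A}_{{}_{(0,0)}\rho,\,{}_{(0,a)}\rho}\,e^a_{\sigma(i)}\neq 0$; in practice we would identify it by checking which pairing yields an $S$-matrix consistent with the Verlinde formula. For each of the two such $\zeta$, this produces three minimal central projections of rank $1$ in every $\mathcal{A}_{{}_{(0,h)}\rho}$, i.e.\ three half-braidings with underlying object $\sum_{h\in G}{}_{(0,h)}\rho$, totalling six. The final dimension check $2\cdot 3^2+6\cdot 1^2=24$ matches the orthogonal subalgebra dimension, confirming that no projections have been missed.
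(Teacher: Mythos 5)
Your proposal is correct and follows essentially the same route as the paper: diagonalize $\textbf{t}_{(0,h)}$ on the $24$-dimensional orthogonal complement of the sixteen known central projections, observe that the rank-three eigenprojections for $\zeta\in\{-e^{2\pi i/5},e^{3\pi i/5}\}$ are already minimal central (giving the two half-braidings of $3\sum_h{}_{(0,h)}\rho$) while those for $\zeta\in\{e^{2\pi i/5},-e^{3\pi i/5}\}$ split into three rank-one pieces each (giving the six half-braidings of $\sum_h{}_{(0,h)}\rho$), with the dimension check $2\cdot 9+6\cdot 1=24$ confirming completeness. The only remark worth adding is that the explicit matching of the rank-one pieces across different $h$ is needed only for the $S$-matrix, not for this counting statement, since the Cauchy--Schwarz argument on the intertwiner-space dimensions already forces equal ranks in every $\mathcal{A}_{{}_{(0,h)}\rho}$.
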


Next we consider the non-trivially-graded components of the tube algebra. Let $\tau \in \{ 1,2\} $.  Then we have $\mathcal{A}_{(\tau,g)} $ is isomorphic to 
  $\mathcal{A}_{(\tau,h)} $ and similarly $\mathcal{A}_{{}_{(\tau,g)} \rho} $ is isomorphic to $\mathcal{A}_{{}_{(\tau,h)} \rho} $ for all $g$ and $h$ in $G$, so it suffices to consider $\mathcal{A}_{(\tau,0)} $ and
  $\mathcal{A}_{{}_{(\tau,0)} \rho} $, which have dimensions $6$ and $54$ respectively. The space  $\mathcal{A}_{(\tau,0), {}_{(\tau,0)} \rho} $ has dimension $12$.

  The algebra $\mathcal{A}_{(\tau,0)} $ is Abelian, and the $\omega$-eigenspace of $\textbf{t}_{(\tau,0)} $ is $2$-dimensional for each cube root of unity $\omega $. For each $\omega $, let $$r^{\omega}_\tau=\frac{1}{3} 
  \sum_{i=0}^2 \omega^i ( (\tau,0 ) \;  (i,0) | 1 |(i,0) \; (\tau,0) )$$
  and $$s^{\omega}_\tau=\frac{1}{3} 
  \sum_{i=0}^2 \omega^i ( (\tau,0 ) \;  {}_{(i,0)} \rho | 1 |{}_{(i,0)} \rho \; (\tau,0) ) .$$
  Then $(s^{\omega}_\tau)^2 =r^{\omega}_\tau+s^{\omega}_\tau$ and
  the six minimal projections of $\mathcal{A}_{(\tau,0)} $ are given by
  $$p^{\omega,0}_{\tau}=\frac{5+\sqrt{5}}{10}r^{\omega}_\tau-\frac{1}{\sqrt{5}}s^{\omega}_\tau   $$
  and
  $$p^{\omega,1}_\tau= \frac{5-\sqrt{5}}{10}r^{\omega}_\tau+\frac{1}{\sqrt{5}}s^{\omega}_\tau,$$
  for the three choices of cube root of unity $\omega $.
  
  The $\textbf{t}_{(\tau,0)} $-eigenvalue for each $p_\tau^{\omega,i} $ is $\omega $. We can find the corresponding minimal central projections in $\Tube (\mathcal{C} \rtimes_{\gamma} \Z_3)$ by looking at the action of $ \mathcal{A}_{(\tau,0)} $ on $ \mathcal{A}_{(\tau,0),{}_{(\tau,0)} \rho} $, in a similar way to the $0$-graded case above. 
  
%
%
%

  \begin{lemma} \label{l3}
  For each $\tau $ in $\{ 1,2\} $,
    the objects $\bigoplus_{g \in G} \limits ( \tau,g) \oplus  {}_{(\tau,g)}  \rho ) $ and $\bigoplus_{g \in G} \limits ( \tau,g) \oplus 3 {}_{(\tau,g)}  \rho ) $ each have three irreducible half-braidings
    
%
  
  \end{lemma}
  
 Finally, we can determine the remaining minimal central projections in  $ \mathcal{A}_{{}_{(\tau,G)} \rho}$ by diagonalizing $\textbf{t}_{{}_{(\tau,0)}\rho} $, in a similar way to the $0$-graded case. We find that 
 $\textbf{t}_{{}_{(\tau,0)}\rho} $ has six additional eigenvalues, which are $\{ \omega \zeta_5^{\pm1} \} $, for $\omega $ a cube root of unity. The corresponding eigenprojections all have rank two and are central.  
%
  \begin{lemma} \label{l4}
  For each $\tau $ in $\{ 1,2\} $,
  the object $2\bigoplus_{g \in G} \limits   {}_{(\tau,g)}  \rho  $ has six irreducible half-braidings.
  \end{lemma}
  
   Now that we have found the $48$ minimal central projections of  $\Tube (\mathcal{C} \rtimes_{\gamma} \Z_3)$ and their $T$-eigenvalues, we can compute the $S$-matrix using Eq.(\ref{Sformula0}). 
   To display the modular data, we group the simple objects in $\cZ (\mathcal{C} \rtimes_{\gamma} \Z_3)$ into eight blocks of sizes $6$, $2$, $6$, $2$, $6$, $2$, $12$, and $12$, respectively, corresponding to the enumerations in Lemmas \ref{l1},  \ref{l2},  \ref{l3},  and \ref{l4}.  
Within each block we use the following indexing convention. We factor the block size into a product of a power of two and a power of three. Then we index each factor of size three by a cube root of unity $\omega $ and each factor of size two by a sign $ \varepsilon$ (or $\varepsilon_1$,$\varepsilon_2 $). 

\begin{theorem}
With notation as above and appropriate ordering within each block, the modular data for the Drinfeld center of the even part of the $4442$ subfactor is given by the table in Figure \ref{st4442take4}. 
\end{theorem}

\begin{figure}
\resizebox{\textwidth}{!}{%
$
\begin{array}{c !{\vline width 1pt} ccccccccccc!{\vline width 1pt}}
\textbf{S} & & & & & & & & & & &  \\[3pt]
\Xhline{1pt}

   & \alpha - \varepsilon \varepsilon' \beta & & & & & & & & &  \\[3pt]
  3& \alpha - \varepsilon \varepsilon' \beta& \varepsilon \varepsilon' \beta - \alpha& & & & & & & &  \\[3pt]

\frac{1}{8}  & 1&-1 &-1-2\varepsilon \varepsilon'+8\delta_{\varepsilon, \varepsilon' }\delta_{\omega, \omega'}& & & & & &  \\[3pt]
  
\frac{1}{8}  &1 &3 & -1-2\varepsilon \varepsilon'& 2\varepsilon, \varepsilon'-1& & & & &  \\[3pt]
  
 \frac{1}{6\sqrt{5}}  &\varepsilon & 3 \varepsilon & 0 & 0 &2\cos \frac{(2-\varepsilon \varepsilon')\pi}{5} & & & &  \\[3pt]
   
 \frac{1}{2\sqrt{5}}  &\varepsilon  &-\varepsilon  & 0& 0& 2\cos \frac{(2-\varepsilon \varepsilon')\pi}{5}  & 2\cos \frac{(3+\varepsilon \varepsilon')\pi}{5}  & & &  \\[3pt]
   
   \frac{1}{3\sqrt{5}}   &2\cos \frac{(3-\varepsilon \varepsilon_1')\pi}{10} \omega^{\varepsilon_2'}  & 0 &0 &0 & -\varepsilon_1'\omega^{-\varepsilon_2'}&0 &2\cos \frac{(3-\varepsilon_1 \varepsilon_1')\pi}{10} (\omega\omega')^{- \varepsilon_2 \varepsilon_2'}  & &  \\[3pt]
      
  \frac{1}{3 \sqrt{5}}&\varepsilon  \omega^{\varepsilon_2'}  &0 & 0& 0& 2\cos \frac{(2-\varepsilon_1' \varepsilon)\pi}{5} \omega^{-\varepsilon_2'}&0 &\varepsilon_1 (\omega\omega')^{- \varepsilon_2' \varepsilon_2} & 2 \cos \frac{(3+ \varepsilon_1 \varepsilon_1')\pi}{5} (\omega\omega')^{- \varepsilon_2 \varepsilon_2'} \\[3pt]

 \Xhline{1pt}
 \textbf{T}  & 1 & -1 &1  & -1& \zeta_5^\varepsilon &  -\zeta_5^\varepsilon & \omega & \omega \zeta_5^{\varepsilon_1}  \Tstrut \\[3pt]
\end{array}
$
}
\caption{The modular data for the Drinfeld center of the $4442$ fusion category. Here $\alpha=\frac{5}{120} $ and $\beta=\frac{2\sqrt{5}}{120} $. The eight blocks have sizes $6$, $2$, $6$, $2$, $6$, $2$, $12$, and $12$. The indexing of each block is as indicated in the text and primes are used for indices corresponding to rows. The number to the left of each row is a multiplicative factor which applies to each entry in that row.}
\label{st4442take4}
\end{figure}

\appendix
\newcommand{\urlprefix}{}
\bibliographystyle{alpha}
\bibliography{newbib}

\end{document}